\begin{document}

\title{WSLD operators: A class of fourth order difference approximations for space Riemann-Liouville derivative}

\author{%
{\sc
Minghua Chen
and
Weihua Deng\thanks{Corresponding author. Email: dengwh@lzu.edu.cn}} \\[2pt]
School of Mathematics and Statistics,
Lanzhou University, Lanzhou 730000, P. R. China
}

\maketitle

\begin{abstract}
{Because of the nonlocal properties of fractional operators, higher order schemes play more important role in discretizing fractional derivatives than classical ones. The striking feature is that higher order schemes of fractional derivatives can keep the same computation cost with first-order schemes but greatly improve the accuracy.
Nowadays, there are already two types of second order discretization schemes for space fractional derivatives: the first type is given and discussed  in [Sousa  \& Li, arXiv:1109.2345; Chen \& Deng, arXiv:1304.3788; Chen et al., Appl. Numer. Math., 70, 22-41]; and the second type is a class of schemes presented in [Tian et al., arXiv:1201.5949]. The core object of this paper is to derive a class of fourth order approximations, called the weighted and shifted Lubich difference (WSLD) operators, for space fractional derivatives. Then we use the derived schemes to solve the space fractional diffusion equation with variable coefficients in one-dimensional and two-dimensional cases. And the unconditional stability and the convergence with the global truncation error $\mathcal{O}(\tau^2+h^4)$  are theoretically proved and numerically verified.
}
{Fractional  diffusion equation; Weighted and shifted Lubich difference operators;  Numerical stability;  Convergence
}
\end{abstract}

\section{Introduction}
\label{sec;introduction}
In recent decades, fractional operators have been playing more and more important roles [\cite{Diethelm:10}], e.g., in mechanics (theory of viscoelasticity  and viscoplasticity), (bio-)chemistry (modelling of polymers and proteins), electrical engineering (transmission of ultrasound waves), medicine (modelling of human tissue under mechanical loads), etc. Efficiently solving the fractional partial differential equations (PDEs) naturally becomes an urgent topic. Because of the nonlocal properties of fractional operators, obtaining the analytical solutions of the fractional PDEs is more challenge or sometimes even impossible; or the obtained analytical solutions are less valuable (expressed by transcendental functions or infinite series). Luckily, some important progress has been made for numerically solving the fractional PDEs by finite difference methods, e.g., see [\cite{Meerschaert:04,Sousa:12,Sun:06,Tian:12,Yuste:06,Zhuang:09}].

In solving space fractional PDEs, high order finite difference schemes display more striking benefits because most of the time they can use the same computational cost with first order scheme but greatly improve the accuracy. For example, comparing with first order difference scheme which may have the matrix algebraic equation $(I-A)u^{n+1}=u^n+b^{n+1}$, the high order scheme has the matrix algebraic equation $(I-{\widetilde A})u^{n+1}=(I+{\widetilde B})u^n+{\tilde b}^{n+1/2}$. The three matrices $A$, ${\widetilde A}$ and ${\widetilde B}$ are all Toeplitz-like and have completely same structure, and the computational count for matrix vector multiplication is $\mathcal{O}(N \mbox{log} N)$, then the computational costs for solving the two matrix algebraic equations are almost the same [\cite{Deng:13}].

Nowadays, we notice that there exist two types of second order discretization schemes for space fractional derivatives. The idea of the first type is to combine the centered difference scheme of second classical derivative with piecewise linear polynomial approximation of the fractional integral.
\cite{Sousa:12} firstly use the idea to obtain the second order approximation in infinite domain. The paper [\cite{Chen:11}] detailedly analyzes the effectiveness of the approximation in finite domain. And this discretization is also effectively used to solve the time-space Capuo-Riesz fractional diffusion equation [\citet{Chen:13}]. The second type of second order approximation is in fact a class of second order discretization, which are obtained by assembling the Gr\"{u}nwald difference operators with different weights and shifts. This class of approximations are detailedly discussed and successfully applied to solve space fractional diffusion equations in [\cite{Tian:12}], and called WSGD operators there. Both of the two types of the operators have completely same structure, and the real parts of the eigenvalues of the matrixes are less than $0$, see [\cite{Deng:11, Tian:12}]. So they can be efficiently used to solve space fractional PDEs.

Based on Lubich's operator [\cite{Lubich:86}], this paper derives a class of fourth order approximations for space fractional derivatives, termed the weighted and shifted Lubich difference operators (WSLD operators). Using the  fractional linear multistep methods,
\cite{Lubich:86} obtains the $L$-th order ($L\leq 6$)  approximations of the $\alpha$-th derivative ($\alpha>0$) or integral ($\alpha<0$) by the corresponding coefficients of the generating functions $\delta ^{\alpha}(\zeta)$, where
\begin{equation}\label{1.1}
\delta^{\alpha}(\zeta) = \left(\sum_{i=1}^L\frac{1}{i}(1-\zeta)^i\right)^{\alpha}.
\end{equation}
 For $\alpha=1$, the scheme reduces to the classical $(L+1)$-point backward difference formula  [\cite{Henrici:62}]. For $L=1$, the scheme (\ref{1.1}) corresponds to the standard Gr\"{u}nwald discretization of $\alpha$-th derivative with first order accuracy;
unfortunately, for the time dependent equations the difference scheme
is unstable. But \cite{Meerschaert:04} successfully circumvent this difficulties by the so-called shifted Gr\"{u}wald formulae.
Taking $L=2$,  \cite{Cuesta:06}  discuss the convolution quadrature time discretization of fractional diffusion-wave equations;
when applying the discretization scheme to space fractional operator with $\alpha \in(1,2)$ for time dependent problem, the obtained scheme is also unstable,
since the eigenvalues of the matrix corresponding to the discretized operator are greater than one.  If using the shifted Lubich's formula, it reduces to the first order accuracy  (detailed description is given in Section 2). This paper weights and shifts Lubich's operator to obtain a class of fourth order discretization schemes, which are effective for time dependent problem.
Then we use the fourth order schemes to solve the following two-dimensional fractional diffusion equation with variable coefficients,
\begin{equation} \label{1.2}
\left\{ \begin{split}
\frac{\partial u(x,y,t) }{\partial t}&=d_{+}(x,y)\,_{x_L}D_x^{\alpha}u(x,y,t)+d_{-}(x,y)\, _{x}D_{x_R}^{\alpha}   u(x,y,t) \\
                                     &\quad + e_{+}(x,y)\,_{y_L}D_y^{\beta}u(x,y,t)+ e_{-}(x,y)\,  _{y}D_{y_R}^{\beta}u(x,y,t)+f(x,y,t),\\
                            u(x,y,0) &=u_0(x,y), ~~~~\, {\rm for}~~~ (x,y) \in \Omega,\\
                             u(x,y,t)&=0, ~~~~~~~~~~\,~~~~ {\rm for}~~~ (x,y,t) \in \partial \Omega \times (0,T],
 \end{split}
 \right.
\end{equation}
in the domain $\Omega=(x_L,x_R) \times (y_L,y_R)$, $0< t \leq T$,
where the orders of the  fractional derivatives are  $1<\alpha,\beta< 2$ and
$f(x,y,t)$ is a source term, and all the variable coefficients are nonnegative.
The left and right Riemann-Liouville  fractional
derivatives of the function $u(x)$ on $[x_L,x_R]$, $-\infty \leq x_L <x_R \leq \infty$ are, respectively, defined by [\cite{Podlubny:99,Samko:93}]
\begin{equation}\label{1.3}
 _{x_L}D_x^{\alpha}u(x)=
\frac{1}{\Gamma(2-\alpha)} \displaystyle \frac{\partial^2}{\partial x^2}
 \int_{x_L}\nolimits^x{\left(x-\xi\right)^{1-\alpha}}{u(\xi)}d\xi,
\end{equation}
and
\begin{equation}\label{1.4}
 _{x}D_{x_R}^{\alpha}u(x)=
 \frac{1}{\Gamma(2-\alpha)}\frac{\partial^2}{\partial x^2}
\int_{x}\nolimits^{x_R}{\left(\xi-x\right)^{1-\alpha}}{u(\xi)}d\xi.
\end{equation}

The  outline of this paper is as follows.
In Section 2, we derive a class of fourth order approximations for space fractional Riemann-Liouville derivatives, being
 effective in solving space fractional PDEs. In Section 3, the full discretization schemes of one-dimensional case of (\ref{1.2}) and  (\ref{1.2}) itself are presented. Section 4 does the detailed theoretical analyses for the stability  and convergence
of the given schemes. To show the effectiveness of the algorithm, we
perform the numerical experiments to verify the theoretical results
 in Section 5. Finally, we
conclude the paper with some remarks in the last section.

\section{Derivation of a class of fourth order discretizations for space fractional operators}\label{sec:1}
In the following, we derive a class of fourth order  approximations for Riemann-Liouville fractional derivatives, and prove that they are effective in solving space fractional PDE, i.e., all the eigenvalues of the matrixes corresponding to the discretized operators have negative real parts.

\subsection{Derivation of the discretization scheme}


Taking $L=2$, for all $|\zeta| \leq 1$, Eq. (\ref{1.1}) can be recast as
\begin{equation}\label{2.1}
\begin{split}
\left(\frac{3}{2}-2\zeta+\frac{1}{2}\zeta^2\right)^{\alpha} & =\left(\frac{3}{2}\right)^{\alpha}(1-\zeta)^{\alpha}(1-\frac{1}{3}\zeta)^{\alpha}\\
&=\left(\frac{3}{2}\right)^{\alpha} \sum_{n=0}^{\infty}(-1)^n \left( \begin{matrix} \alpha \\n \end{matrix} \right )\zeta^n
\cdot \sum_{m=0}^{\infty} \left(-\frac{1}{3}\right)^m  \left( \begin{matrix} \alpha \\m \end{matrix} \right )\zeta^m  \\
&=\left(\frac{3}{2}\right)^{\alpha} \sum_{n=0}^{\infty}\left[\sum_{m=0}^{\infty} (-1)^n \left( \begin{matrix} \alpha \\n \end{matrix} \right )
\cdot \left(-\frac{1}{3}\right)^m  \left( \begin{matrix} \alpha \\m \end{matrix} \right )\right]\zeta^{m+n}  \\
&= \sum_{k=0}^{\infty}q_k^{\alpha}\zeta^{k},
\end{split}
\end{equation}
with $k=m+n$,  and
\begin{equation}\label{2.2}
 q_k^{\alpha}=(-1)^k \left(\frac{3}{2}\right)^{\alpha} \sum_{m=0}^{k} 3^{-m} \left( \begin{matrix} \alpha \\k-m \end{matrix} \right )
  \left( \begin{matrix} \alpha \\m \end{matrix} \right )
  =\left(\frac{3}{2}\right)^{\alpha} \sum_{m=0}^{k} 3^{-m}g_m^{\alpha}g_{k-m}^{\alpha},
\end{equation}
where $g_k^{\alpha}=(-1)^k\left ( \begin{matrix}\alpha \\ k\end{matrix} \right )$ are the coefficients of the power series of the generating function $(1-\zeta)^{\alpha}$,
 and they can be calculated by the following recursively formula
\begin{equation}\label{2.3}
  g_0^{\alpha}=1, ~~~~g_k^{\alpha}=\left(1-\frac{\alpha+1}{k}\right)g_{k-1}^{\alpha},~~k \geq 1.
\end{equation}
If $\alpha<0$, $ \{q_k^{\alpha}\}_{k=0}^{\infty}$ correspond to the coefficients of  the 2nd order convolution quadrature for the approximation of fractional integral operator
 [see,~\cite{Cuesta:06}].

\begin{lemma}\label{lemma2.1}
 The coefficients in (\ref{2.2}) with $\alpha \in (1,2)$ satisfy the following properties
 \begin{equation*}
\begin{split}
&q_0^{\alpha}=\left(\frac{3}{2}\right)^{\alpha}>0; \!\!\quad
~~~~~~~~~~~~~~~~~~~~~~~~~~~~~~~~~~~~~~~~~~~~~~~~~~~~~~~~~~~~~~~~~~~~q_1^{\alpha}=-\left(\frac{3}{2}\right)^{\alpha}\frac{4\alpha}{3}<0; ~~\quad\\
&q_2^{\alpha}=\left(\frac{3}{2}\right)^{\alpha}\frac{\alpha(8\alpha-5)}{9}>0;
~~~\,~~~~~~~~~~~~~~~~~~~~~~~~~~~~~~~~~~~~~~~~~~~~~~~~~~q_3^{\alpha}=\left(\frac{3}{2}\right)^{\alpha}\frac{4\alpha(\alpha-1)(7-8\alpha)}{81}<0; \\
&q_4^{\alpha}=\left(\frac{3}{2}\right)^{\alpha}\frac{\alpha(\alpha-1)(64\alpha^2-176\alpha+123)}{486}>0;\\
&q_5^{\alpha}=\left(\frac{3}{2}\right)^{\alpha}\frac{2\alpha(\alpha-1)(2-\alpha)(64\alpha^2-208\alpha+183)}{3645}>0;
 ~~~~~~~\sum_{k=0}^{\infty}q_k^{\alpha}=0.\\
\end{split}
\end{equation*}
 \end{lemma}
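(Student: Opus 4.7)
The plan is to treat the first six coefficients by direct computation and the sum identity by evaluating the generating function at the boundary of its disc of convergence.

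First I would tabulate $g_0^{\alpha},\ldots,g_5^{\alpha}$ using the recursion (\ref{2.3}), obtaining $g_0^{\alpha}=1$, $g_1^{\alpha}=-\alpha$, $g_2^{\alpha}=\alpha(\alpha-1)/2$, etc. Then for each $k=0,1,\ldots,5$ I would expand
\[
q_k^{\alpha}=\left(\tfrac{3}{2}\right)^{\alpha}\sum_{m=0}^{k}3^{-m}g_m^{\alpha}g_{k-m}^{\alpha}
\]
symbolically, collect terms, and factor out the common monomial factors in $\alpha$ (namely $\alpha$ for $k\ge 1$, an extra $(\alpha-1)$ for $k\ge 3$, and an extra $(2-\alpha)$ for $k\ge 5$, which is the natural factorisation coming from $g_k^{\alpha}$ itself). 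This should recover the stated expressions; the algebra is routine but lengthy for $k=4,5$, and I would organise it by pairing the terms $m$ and $k-m$ to exploit the symmetry of the convolution.

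The sign analysis is then straightforward except for the quadratic factors in $q_4^{\alpha}$ and $q_5^{\alpha}$. On $\alpha\in(1,2)$ the prefactors $\alpha$, $\alpha-1$, $2-\alpha$, $8\alpha-5$, $7-8\alpha$ each have an obvious sign; the remaining factors $64\alpha^{2}-176\alpha+123$ and $64\alpha^{2}-208\alpha+183$ I would show are strictly positive for \emph{all} real $\alpha$ by computing their discriminants, $176^{2}-4\cdot 64\cdot 123=-512<0$ and $208^{2}-4\cdot 64\cdot 183=-3584<0$, so each quadratic has no real roots and opens upward. Combining the signs of these factors with the positivity of $(3/2)^{\alpha}$ gives the claimed inequalities.

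For the identity $\sum_{k=0}^{\infty}q_{k}^{\alpha}=0$ I would argue via the generating function (\ref{2.1}). The right-hand side $(\tfrac{3}{2})^{\alpha}(1-\zeta)^{\alpha}(1-\tfrac{1}{3}\zeta)^{\alpha}$ vanishes at $\zeta=1$ because $\alpha>0$. It remains to justify that the power series $\sum_k q_k^{\alpha}\zeta^{k}$ converges at $\zeta=1$ so that Abel's theorem applies: since $|g_{k}^{\alpha}|\sim C k^{-\alpha-1}$ as $k\to\infty$ for $\alpha>0$, the coefficients $g_{k}^{\alpha}$ are absolutely summable, hence so is their weighted convolution $q_{k}^{\alpha}$; then $\lim_{\zeta\to 1^{-}}\sum_{k}q_{k}^{\alpha}\zeta^{k}=\sum_{k}q_{k}^{\alpha}=0$.

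The main obstacle is the bookkeeping for $q_{4}^{\alpha}$ and $q_{5}^{\alpha}$: cancellations must produce exactly the claimed factors $\alpha(\alpha-1)$ and $\alpha(\alpha-1)(2-\alpha)$ together with the irreducible quadratics, and a small arithmetic slip would spoil both the explicit formula and the sign. I would therefore double-check those two cases by an independent method, for instance by computing $q_{k}^{\alpha}$ from $\tfrac{1}{k!}\frac{d^{k}}{d\zeta^{k}}(\tfrac{3}{2}-2\zeta+\tfrac{1}{2}\zeta^{2})^{\alpha}\big|_{\zeta=0}$ using Faà di Bruno or by symbolic software, and cross-reference the two derivations.
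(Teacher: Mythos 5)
Your proposal is correct and follows essentially the same route as the paper: the explicit formulas and sign checks for $q_0^{\alpha},\ldots,q_5^{\alpha}$ are obtained by direct expansion of the convolution (\ref{2.2}) (the paper simply asserts these as ``easy to check''), and the identity $\sum_{k=0}^{\infty}q_k^{\alpha}=0$ comes from evaluating the generating function (\ref{2.1}) at $\zeta=1$, which is the entirety of the paper's written proof. Your additional care — verifying the negative discriminants $-512$ and $-3584$ of the two irreducible quadratics, and justifying the boundary evaluation via the absolute summability of $g_k^{\alpha}=O(k^{-\alpha-1})$ and Abel's theorem — fills in exactly the steps the paper leaves implicit, so no gap remains.
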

\begin{proof}
Taking $\zeta=1$, it is easy to check that
  \begin{equation*}\label{2.5}
\begin{split}
\sum_{k=0}^{\infty}q_k^{\alpha}=\sum_{k=0}^{\infty}q_k^{\alpha}\zeta^{k}=\left(\frac{3}{2}-2\zeta+\frac{1}{2}\zeta^2\right)^{\alpha}=0.
\end{split}
\end{equation*}
\end{proof}

We first introduce two lemmas, which will be used to prove that the several classes of derived discretization schemes are 2nd, 3rd, and 4th order convergent, respectively.
\begin{lemma} [\cite{Ervin:06}] \label{lemma2.2}
 Let $\alpha>0$, $u \in C_0^{\infty}(\Omega)$, $\Omega \subset \mathbb{R}$, then
 \begin{equation*}
 \mathcal{F}(_{-\infty}D_x^{\alpha}u(x))=(-i\omega)^{\alpha}\widehat{u}(\omega) ~~ {\rm and}~~
  \mathcal{F}(_xD_{\infty}^{\alpha}u(x))=(i\omega)^{\alpha}\widehat{u}(\omega),
 \end{equation*}
 where $\mathcal{F}$ denotes the Fourier transform operator and $\widehat{u}(\omega)=\mathcal{F}(u)$, i.e.,
 \begin{equation*}
    \widehat{u}(\omega)=\int_{\mathbb{R}}e^{i\omega x }u(x)dx.
 \end{equation*}
\end{lemma}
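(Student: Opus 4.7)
The plan is to reduce the fractional derivative to an ordinary $n$-th derivative of a convolution, and then to invoke the convolution and differentiation properties of the Fourier transform. Fix $n=\lceil\alpha\rceil$ and introduce the one-sided power kernel $k_\beta(x)=x^{\beta-1}/\Gamma(\beta)$ for $x>0$ and $k_\beta(x)=0$ for $x\le 0$. After extending $u$ by zero to all of $\mathbb{R}$ (which keeps it in $C_0^\infty$), definition (\ref{1.3}) can be rewritten as $_{-\infty}D_x^\alpha u = (d^n/dx^n)(k_{n-\alpha}*u)$.

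Next I would record the elementary rule $\mathcal{F}(u')(\omega)=-i\omega\,\widehat{u}(\omega)$, proved by integration by parts using the compact support of $u$, and iterate it to $\mathcal{F}(d^n v/dx^n)=(-i\omega)^n\,\widehat{v}$. Combined with the convolution theorem $\widehat{f*g}=\widehat{f}\,\widehat{g}$, this yields $\mathcal{F}(_{-\infty}D_x^\alpha u)(\omega)=(-i\omega)^n\,\widehat{k_{n-\alpha}}(\omega)\,\widehat{u}(\omega)$, reducing the claim to computing $\widehat{k_\beta}$ for $\beta=n-\alpha\in(0,1]$.

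This Fourier transform is the main obstacle, because $k_\beta$ is not absolutely integrable at infinity and the integral has to be interpreted in the sense of tempered distributions (or via Abel summation). My plan is to introduce the damping factor $e^{-\epsilon x}$ with $\epsilon>0$, evaluate the now absolutely convergent integral $\int_0^\infty e^{(i\omega-\epsilon)x}x^{\beta-1}\,dx=\Gamma(\beta)(\epsilon-i\omega)^{-\beta}$ by a standard contour deformation, and then pass to the limit $\epsilon\downarrow 0$ after pairing against $\widehat{u}$, which lies in the Schwartz class because $u\in C_0^\infty$. This produces $\widehat{k_\beta}(\omega)=(-i\omega)^{-\beta}$ with the principal branch $(-i\omega)^{-\beta}=|\omega|^{-\beta}\exp(i(\pi\beta/2)\,\mathrm{sgn}(\omega))$; tracking the correct branch here is the finicky step.

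Assembling, $(-i\omega)^n(-i\omega)^{-(n-\alpha)}=(-i\omega)^\alpha$, which establishes the first identity. The second identity then follows from the substitution $x\mapsto -x$, which interchanges $_{-\infty}D_x^\alpha$ and $_xD_\infty^\alpha$ while sending $-i\omega$ to $i\omega$ in the Fourier transform, so no separate computation is required.
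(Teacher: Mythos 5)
The paper offers no proof of this lemma at all: it is imported verbatim from the cited reference [Ervin \& Roop, 2006], so there is no in-paper argument to compare against. Your proposal is essentially the standard derivation (and in substance the one found in the cited literature): write $_{-\infty}D_x^{\alpha}u=\frac{d^n}{dx^n}(k_{n-\alpha}\ast u)$ with $n=\lceil\alpha\rceil$, use $\mathcal{F}(u')=-i\omega\,\widehat{u}$ (which is the correct sign for the paper's convention $\widehat{u}(\omega)=\int e^{i\omega x}u\,dx$) together with the convolution theorem, and compute $\widehat{k_\beta}(\omega)=(-i\omega)^{-\beta}$ by inserting the damping factor $e^{-\epsilon x}$ and letting $\epsilon\downarrow 0$ in the sense of tempered distributions; the branch you record, $(-i\omega)^{-\beta}=|\omega|^{-\beta}e^{i(\pi\beta/2)\,\mathrm{sgn}(\omega)}$, is the right one, and the exponents combine without crossing the cut, so $(-i\omega)^n(-i\omega)^{-(n-\alpha)}=(-i\omega)^{\alpha}$. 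The reflection $x\mapsto-x$ for the right-sided identity is also sound. Two small points to tighten if you write this out in full: (i) $k_{n-\alpha}\ast u$ itself decays only like $x^{-(1+\alpha-n)}$ and need not be in $L_1$, so the intermediate use of the convolution and differentiation rules genuinely requires the distributional framework you invoke (the final object $_{-\infty}D_x^{\alpha}u$ does decay like $x^{-\alpha-1}$ and is classical); (ii) for general $\alpha>0$ the right Riemann--Liouville derivative carries a factor $(-1)^n$, invisible in the paper's displayed $n=2$ case, and your reflection argument should be checked against whichever sign convention is intended, since it is exactly this factor that turns $(-i\omega)^{\alpha}$ into $(i\omega)^{\alpha}$ rather than $(-1)^n(i\omega)^{\alpha}$.
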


\begin{lemma}\label{lemma2.3}
 Let $u $, $_{-\infty}D_x^{\alpha+1}u(x)$ (or $_{-\infty}D_x^{\alpha+2}u(x)$) with $\alpha \in (1,2)$  and their Fourier transforms belong to $L_1(\mathbb{R})$ when $p\neq 0$ (or $p=0$); and denote that
 \begin{equation}\label{2.4}
_LA_{p}^{\alpha}u(x)=\frac{1}{h^{\alpha}}\sum_{k=0}^{\infty}q_k^{\alpha}u(x-(k-p)h),
\end{equation}
where $q_k^{\alpha}$ is defined by (\ref{2.2}) and $p$ an integer. Then
$$_{-\infty}D_x^{\alpha}u(x)=\,_LA_{p}^{\alpha}u(x)+\mathcal{O}(h),   \,~~~~p\neq 0,$$
and
$$  _{-\infty}D_x^{\alpha}u(x)=\,_LA_{p}^{\alpha}u(x)+\mathcal{O}(h^2),   ~~~p=0.$$

\end{lemma}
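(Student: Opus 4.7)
The natural approach is Fourier analysis, paralleling the shifted Gr\"unwald arguments of Meerschaert--Tadjeran and Tian--Zhou--Deng. First I would apply the Fourier transform termwise to $_LA_p^\alpha u$, using the translation rule $\mathcal{F}\bigl(u(\cdot - (k-p)h)\bigr)(\omega) = e^{i(k-p)\omega h}\widehat{u}(\omega)$. The infinite sum $\sum_{k\ge 0} q_k^\alpha e^{ik\omega h}$ that emerges is exactly the generating function (\ref{2.1}) evaluated at $\zeta = e^{i\omega h}$, so I obtain
\begin{equation*}
\mathcal{F}\bigl(_LA_p^\alpha u\bigr)(\omega) \;=\; \frac{e^{-ip\omega h}}{h^\alpha}\bigl(\delta(e^{i\omega h})\bigr)^\alpha \widehat{u}(\omega), \qquad \delta(\zeta) := (1-\zeta) + \tfrac{1}{2}(1-\zeta)^2.
\end{equation*}
Subtracting the symbol $(-i\omega)^\alpha \widehat{u}(\omega)$ from Lemma \ref{lemma2.2} gives the error symbol
\begin{equation*}
(-i\omega)^\alpha\, W_p(-i\omega h)\,\widehat{u}(\omega),\qquad W_p(w) := \frac{e^{pw}\bigl(\delta(e^{-w})\bigr)^\alpha}{w^\alpha} - 1.
\end{equation*}

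Second, I would analyse $W_p$ by Taylor expansion near $w=0$. Lubich's classical calculation (which underlies the very choice $L=2$ in (\ref{1.1})) produces $\delta(e^{-w})/w = 1 - w^2/3 + w^3/4 + \mathcal{O}(w^4)$, so $(\delta(e^{-w})/w)^\alpha = 1 - \alpha w^2/3 + \mathcal{O}(w^3)$. Multiplying by $e^{pw} = 1+pw+\mathcal{O}(w^2)$ yields
\begin{equation*}
W_p(w) \;=\; pw + \bigl(\tfrac{p^2}{2}-\tfrac{\alpha}{3}\bigr)w^2 + \mathcal{O}(w^3).
\end{equation*}
For $p\neq 0$ the leading term is the linear $pw$, giving $|W_p(-i\omega h)| \lesssim h|\omega|$; for $p=0$ the linear term disappears and $|W_0(-i\omega h)| \lesssim h^2\omega^2$. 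Multiplying by $(-i\omega)^\alpha$, the error symbol is bounded pointwise by $Ch|\omega|^{\alpha+1}$, respectively $Ch^2|\omega|^{\alpha+2}$.

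Third, the pointwise conclusion follows from Fourier inversion:
\begin{equation*}
\bigl|{}_LA_p^\alpha u(x) - {}_{-\infty}D_x^\alpha u(x)\bigr| \;\le\; \frac{1}{2\pi}\int_{\mathbb{R}}|(-i\omega)^\alpha W_p(-i\omega h)|\,|\widehat{u}(\omega)|\,d\omega,
\end{equation*}
and, via $|\omega|^{\alpha+j}|\widehat{u}(\omega)| = |\widehat{{}_{-\infty}D_x^{\alpha+j}u}(\omega)|$ with $j=1$ or $2$, the right-hand side is dominated by $Ch\,\|\widehat{{}_{-\infty}D_x^{\alpha+1}u}\|_{L_1}$ or $Ch^2\,\|\widehat{{}_{-\infty}D_x^{\alpha+2}u}\|_{L_1}$, both finite by the standing hypothesis.

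The main obstacle will be promoting the local Taylor estimates on $W_p$ to a uniform bound on all of $\omega\in\mathbb{R}$, since the integration range is global. A case split $|\omega h|\le 1$ versus $|\omega h|>1$ handles this: on the compact set $|\omega h|\le 1$ the Taylor expansion gives the stated bound directly, while on $|\omega h|>1$ the factor $(\delta(e^{i\omega h}))^\alpha$ is bounded (since $e^{i\omega h}$ lies on the unit circle), and one uses $h^{-\alpha}\le h|\omega|^{\alpha+1}$ together with $|(-i\omega h)^\alpha| = h^\alpha|\omega|^\alpha \le h|\omega|^{\alpha+1}\cdot h^{\alpha-1}|\omega|^{-1}$ to fold everything into the same majorant $Ch|\omega|^{\alpha+1}$ (and analogously $Ch^2|\omega|^{\alpha+2}$ when $p=0$); this is the only delicate point, and once it is in hand the rest is a direct integration against $|\widehat{u}|$.
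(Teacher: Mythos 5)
Your proposal follows essentially the same route as the paper's proof: Fourier-transform the operator, identify the symbol $e^{pz}\left(\frac{1-e^{-z}}{z}\right)^{\alpha}\left(1+\frac{1}{2}\left(1-e^{-z}\right)\right)^{\alpha}$ with $z=-i\omega h$, Taylor-expand it as $1+pz+\left(\frac{p^2}{2}-\frac{\alpha}{3}\right)z^2+\cdots$ (your coefficient agrees with the paper's $\frac{3p^2-2\alpha}{6}$), and conclude by Fourier inversion via Lemma \ref{lemma2.2}. The only addition is your explicit case split $|\omega h|\le 1$ versus $|\omega h|>1$ to make the bound $|\widehat{\phi}(\omega)|\le C h^{j}|\omega|^{\alpha+j}|\widehat{u}(\omega)|$ uniform in $\omega$, a point the paper passes over silently; this is a correct refinement, not a different method.
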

\begin{proof}
From (\ref{2.2}) and $k=m+n$, we obtain
\begin{equation*}
\begin{split}
\mathcal{F}(_LA_{p}^{\alpha}u)(\omega)&=h^{-\alpha}\sum_{k=0}^{\infty}q_k^{\alpha} \mathcal{F}\left(u(x-(k-p)h)\right)(\omega) \\
&=h^{-\alpha}e^{-i\omega ph} \sum_{k=0}^{\infty}q_k^{\alpha} \left(e^{i\omega h}\right)^k \widehat{u}(\omega)\\
&=h^{-\alpha}e^{-i\omega ph}\left(\frac{3}{2}\right)^{\alpha} \sum_{n=0}^{\infty}(-1)^n \left( \begin{matrix} \alpha \\n \end{matrix} \right )e^{i\omega nh}
\cdot \sum_{m=0}^{\infty} \left(-\frac{1}{3}\right)^m  \left( \begin{matrix} \alpha \\m \end{matrix} \right )e^{i\omega mh} \widehat{u}(\omega)\\
&=(-i\omega)^{\alpha}\left[e^{-i\omega ph} \left(\frac{1-e^{i\omega h}}{-i\omega h}\right )^{\alpha} \right]
\left( 1+\frac{1}{2}\left(1-e^{i\omega h}\right)\right)^{\alpha} \widehat{u}(\omega)\\
&=(-i\omega)^{\alpha}e^{ pz} \left(\frac{1-e^{-z}}{z}\right )^{\alpha}
\left( 1+\frac{1}{2}\left(1-e^{-z}\right)\right)^{\alpha} \widehat{u}(\omega),
  \end{split}
\end{equation*}
with $z=-i\omega h$. It is easy to check that
 \begin{equation*}
\begin{split}
e^{ pz} \left(\frac{1-e^{-z}}{z}\right )^{\alpha}
&=\Big[1+\Big(p-\frac{\alpha}{2}\Big)z+ \Big(\frac{1}{2}p^2 -\frac{\alpha }{2}p+\frac{3\alpha^2+\alpha}{24}\Big) z^2 \\
&\quad +\Big(\frac{1}{6}p^3 -\frac{\alpha }{4}p^2
+\frac{3\alpha^2+\alpha}{24}p-\frac{\alpha^3+\alpha^2}{48} \Big)z^3  +\mathcal{O}(z^4)    \Big],
  \end{split}
\end{equation*}
and
 \begin{equation*}
\begin{split}
&\left( 1+\frac{1}{2}\left(1-e^{-z}\right)\right)^{\alpha}
=\Big[1+ \frac{\alpha}{2}z+\frac{\alpha(\alpha-3)}{8}z^2+\frac{\alpha(\alpha^2-9\alpha+12)}{48}z^3+\mathcal{O}(z^4)\Big],
  \end{split}
\end{equation*}
then we have
\begin{equation}\label{2.5}
\begin{split}
&e^{ pz} \left(\frac{1-e^{-z}}{z}\right )^{\alpha} \!\! \left( 1+\frac{1}{2}\left(1-e^{-z}\right)\right)^{\alpha}
\!\!  =1+pz+\frac{3p^2-2\alpha}{6}z^2+\frac{2p^3+\alpha(3-4p)}{12}z^3+\mathcal{O}(z^4).
  \end{split}
\end{equation}
Therefore, from Lemma \ref{lemma2.1}, we get
\begin{equation*}
\begin{split}
\mathcal{F}(_LA_{p}^{\alpha}u)(\omega)=\mathcal{F}(_{-\infty}D_x^{\alpha}u(x))+ \widehat{\phi}(\omega),
  \end{split}
\end{equation*}
where $ \widehat{\phi}(\omega)=(-i\omega)^{\alpha}\left(pz+\frac{3p^2-2\alpha}{6}z^2+\frac{2p^3+\alpha(3-4p)}{12}z^3+\mathcal{O}(z^4)\right)\widehat{u}(\omega)$.
Then there exists
\begin{equation*}
\begin{split}
&|\widehat{\phi}(\omega)| \leq \widetilde{c}|i\omega|^{\alpha+1}|\widehat{u}(\omega)|\cdot h, ~~~~~~p\neq 0,\\
&|\widehat{\phi}(\omega)| \leq c|i\omega|^{\alpha+2}|\widehat{u}(\omega)|\cdot h^2, ~\,~~~p=0.
\end{split}
\end{equation*}
Hence
\begin{equation*}
\begin{split}
|_{-\infty}D_x^{\alpha}u(x)-\,_LA_{p}^{\alpha}u(x)|=|\phi(x)| \leq \frac{1}{2\pi}\int_{\mathbb{R}}|\widehat{\phi}(\omega)|dx= \left\{ \begin{array}
 {l@{\quad} l}
\mathcal{O}(h),&p\neq 0,\\
\mathcal{O}(h^2),& p=0.
 \end{array}
 \right.
  \end{split}
\end{equation*}
\end{proof}

In the following, we present the approximation operators for Riemann-Liouville derivative and prove that they have 2nd, 3rd, and 4th order truncation errors.
\begin{theorem}\label{theorem2.1}(Second order approximations for left Riemann-Liouville  derivative)
 Let $u$, $_{-\infty}D_x^{\alpha+2}u(x)$ with $\alpha \in (1,2)$ and their Fourier transforms  belong to $L_1(\mathbb{R})$. Denote that
 \begin{equation}\label{2.6}
_{2L}A_{p,q}^{\alpha}u(x)=w_p\,_LA_{p}^{\alpha}u(x)+w_q\,_LA_{q}^{\alpha}u(x),
\end{equation}
where $_LA_{p}^{\alpha}$, $_LA_{q}^{\alpha}$  are defined by (\ref{2.4}), $w_p=\frac{q}{q-p}$,  $w_q=\frac{p}{p-q}$, $p \neq  q$,
 and $p$, $q$ are integers. Then
\begin{equation*}
\begin{split}
  &_{-\infty}D_x^{\alpha}u(x)=\,_{2L}A_{p,q}^{\alpha}u(x)+\mathcal{O}(h^2).
 \end{split}
\end{equation*}
\end{theorem}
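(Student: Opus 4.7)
The plan is to lift the argument of Lemma \ref{lemma2.3} from a single shift to a weighted pair of shifts and show that the weights $w_p, w_q$ are precisely chosen to annihilate the first-order term in the symbol expansion. First I would apply the Fourier transform to $_{2L}A_{p,q}^{\alpha}u$. By the computation carried out in the proof of Lemma \ref{lemma2.3}, expansion \eqref{2.5} gives
\begin{equation*}
\mathcal{F}(_LA_{r}^{\alpha}u)(\omega)=(-i\omega)^{\alpha}\widehat{u}(\omega)\Bigl[1+rz+\tfrac{3r^2-2\alpha}{6}z^2+\tfrac{2r^3+\alpha(3-4r)}{12}z^3+\mathcal{O}(z^4)\Bigr]
\end{equation*}
for any shift $r\in\{p,q\}$, where $z=-i\omega h$. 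Linearity then yields
\begin{equation*}
\mathcal{F}(_{2L}A_{p,q}^{\alpha}u)(\omega)=(-i\omega)^{\alpha}\widehat{u}(\omega)\Bigl[(w_p+w_q)+(w_p p+w_q q)z+\tfrac{1}{6}\bigl(3(w_p p^2+w_q q^2)-2\alpha(w_p+w_q)\bigr)z^2+\mathcal{O}(z^3)\Bigr].
\end{equation*}

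Next I would verify the two algebraic identities that make the scheme second order: $w_p+w_q=1$ and $w_p p+w_q q=0$. With $w_p=\tfrac{q}{q-p}$, $w_q=\tfrac{p}{p-q}$, both equations are immediate, so the bracketed symbol equals $1+C(p,q,\alpha)z^2+\mathcal{O}(z^3)$ for some explicit constant $C$. Consequently
\begin{equation*}
\mathcal{F}(_{2L}A_{p,q}^{\alpha}u)(\omega)-\mathcal{F}(_{-\infty}D_x^{\alpha}u)(\omega)=\widehat{\psi}(\omega),\quad \widehat{\psi}(\omega)=(-i\omega)^{\alpha}\widehat{u}(\omega)\bigl(C\,z^2+\mathcal{O}(z^3)\bigr).
\end{equation*}

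Then I would bound $|\widehat{\psi}(\omega)|\le c\,|\omega|^{\alpha+2}|\widehat{u}(\omega)|\,h^2$ on compact frequency sets and use the Fourier characterization from Lemma \ref{lemma2.2} to identify the dominating factor $|\omega|^{\alpha+2}|\widehat{u}(\omega)|$ with $|\mathcal{F}(_{-\infty}D_x^{\alpha+2}u)(\omega)|$, which is integrable by hypothesis. Applying the inverse Fourier transform and estimating as in Lemma \ref{lemma2.3},
\begin{equation*}
|_{-\infty}D_x^{\alpha}u(x)-\,_{2L}A_{p,q}^{\alpha}u(x)|\le \tfrac{1}{2\pi}\int_{\mathbb{R}}|\widehat{\psi}(\omega)|\,d\omega=\mathcal{O}(h^2),
\end{equation*}
which is the claim.

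The only real obstacle is bookkeeping: one must be careful that the $\mathcal{O}(z^3)$ remainder of the symbol is uniform in $\omega h$ over the frequencies where $\widehat{u}$ is concentrated, so that the remainder really contributes $h^2$ rather than an $\omega$-dependent blow-up. This is handled exactly as in Lemma \ref{lemma2.3} by pulling out an extra factor of $|\omega|^2$ into the symbol $(-i\omega)^{\alpha+2}\widehat{u}(\omega)$ and invoking the $L_1(\mathbb{R})$ hypothesis on $\mathcal{F}(_{-\infty}D_x^{\alpha+2}u)$. No new analytic machinery is required beyond what Lemma \ref{lemma2.3} already established.
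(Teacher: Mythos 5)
Your proposal is correct and follows essentially the same route as the paper: expand the symbol of each $_LA_{r}^{\alpha}$ via Lemma \ref{lemma2.3}, observe that $w_p+w_q=1$ and $w_pp+w_qq=0$ cancel the $\mathcal{O}(z)$ term (indeed your $z^2$-coefficient $-\tfrac{3pq+2\alpha}{6}$ matches the paper's), and then transfer the $\mathcal{O}(z^2)$ symbol error to an $\mathcal{O}(h^2)$ pointwise bound exactly as in Lemma \ref{lemma2.3}. No substantive differences.
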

\begin{proof}
 From the proof of Lemma  \ref{lemma2.3}, we have
  \begin{equation*}
\begin{split}
\mathcal{F}(_LA_{p}^{\alpha}u)(\omega)
&=(-i\omega)^{\alpha}\Big[ 1+pz+\frac{3p^2-2\alpha}{6}z^2+\frac{2p^3+\alpha(3-4p)}{12}z^3+\mathcal{O}(z^4) \Big] \widehat{u}(\omega)
  \end{split}
\end{equation*}
and
  \begin{equation*}
\begin{split}
\mathcal{F}(_LA_{q}^{\alpha}u)(\omega)
&=(-i\omega)^{\alpha}\Big[ 1+qz+\frac{3q^2-2\alpha}{6}z^2+\frac{2q^3+\alpha(3-4q)}{12}z^3+\mathcal{O}(z^4) \Big] \widehat{u}(\omega).
  \end{split}
\end{equation*}
Then there exists
 \begin{equation*}
\begin{split}
\mathcal{F}(_{2L}A_{p,q}^{\alpha}u)(\omega)
=(-i\omega)^{\alpha}\Big[ 1-\frac{3pq+2\alpha}{6}z^2-\frac{2pq(p+q)-3\alpha}{12}z^3+\mathcal{O}(z^4) \Big] \widehat{u}(\omega),
  \end{split}
\end{equation*}
and by the similar way to the proof of Lemma  \ref{lemma2.3} we get
\begin{equation*}
\begin{split}
  &_{-\infty}D_x^{\alpha}u(x)=\,_{2L}A_{p,q}^{\alpha}u(x)+\mathcal{O}(h^2).
 \end{split}
\end{equation*}
\end{proof}
\begin{theorem}\label{theorem2.2}(Third order approximations for left Riemann-Liouville  derivative)
 Let $u$, $_{-\infty}D_x^{\alpha+3}u(x)$ with $\alpha \in (1,2)$ and their Fourier transforms  belong to $L_1(\mathbb{R})$. Denote that
 \begin{equation}\label{2.7}
_{3L}A_{p,q,r,s}^{\alpha}u(x)=w_{p,q}\,_{2L}A_{p,q}^{\alpha}u(x)+w_{r,s}\,_{2L}A_{r,s}^{\alpha}u(x),
\end{equation}
where $_{2L}A_{p,q}^{\alpha}$ and $_{2L}A_{r,s}^{\alpha}$  are defined by (\ref{2.6}),  $w_{p,q}=\frac{3rs+2\alpha}{3(rs-pq)}$,   $w_{r,s}=\frac{3pq+2\alpha}{3(pq-rs)}$, $rs \neq  pq$,
 and  $p$, $q$, $r$, $s$ are integers. Then
\begin{equation*}
\begin{split}
  &_{-\infty}D_x^{\alpha}u(x)=\,_{3L}A_{p,q,r,s}^{\alpha}u(x)+\mathcal{O}(h^3).
 \end{split}
\end{equation*}
\end{theorem}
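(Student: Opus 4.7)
The plan is to follow the same Fourier-analytic template used in Lemma~\ref{lemma2.3} and Theorem~\ref{theorem2.1}, with the extra task of verifying that the particular weights $w_{p,q}$ and $w_{r,s}$ are exactly the ones that kill the $z^2$ coefficient in the symbol, after which one is left with a residual that gives $O(h^3)$.

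First I would take the Fourier transform of (\ref{2.7}). From the intermediate expansion already written down in the proof of Theorem~\ref{theorem2.1}, we have
\begin{equation*}
\mathcal{F}(_{2L}A_{p,q}^{\alpha}u)(\omega)=(-i\omega)^{\alpha}\Bigl[1-\tfrac{3pq+2\alpha}{6}z^2-\tfrac{2pq(p+q)-3\alpha}{12}z^3+\mathcal{O}(z^4)\Bigr]\widehat{u}(\omega),
\end{equation*}
with $z=-i\omega h$, and the analogous identity with $(p,q)$ replaced by $(r,s)$. Multiplying these by $w_{p,q}$ and $w_{r,s}$ respectively and adding, the constant term is $w_{p,q}+w_{r,s}$, which the stated formulas (with $rs\neq pq$) make equal to $1$. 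The coefficient of $z^2$ is $-\tfrac{1}{6}[w_{p,q}(3pq+2\alpha)+w_{r,s}(3rs+2\alpha)]$, and a direct substitution of the prescribed weights shows this vanishes; this is the defining purpose of the choice of $w_{p,q},w_{r,s}$, and I would record it as a one-line algebraic check.

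Consequently the combined symbol takes the form
\begin{equation*}
\mathcal{F}(_{3L}A_{p,q,r,s}^{\alpha}u)(\omega)=(-i\omega)^{\alpha}\bigl[1+c_3(\alpha;p,q,r,s)z^3+\mathcal{O}(z^4)\bigr]\widehat{u}(\omega),
\end{equation*}
where $c_3$ is an explicit rational expression in $p,q,r,s,\alpha$ obtained from combining the $z^3$ coefficients of the two $_{2L}A$ symbols; its exact value is immaterial for the order estimate. Subtracting $\mathcal{F}(_{-\infty}D_x^{\alpha}u)(\omega)=(-i\omega)^{\alpha}\widehat{u}(\omega)$ (Lemma~\ref{lemma2.2}) gives a remainder $\widehat{\phi}(\omega)$ satisfying $|\widehat{\phi}(\omega)|\leq C|\omega|^{\alpha+3}|\widehat u(\omega)|\,h^3$ uniformly in $\omega$.

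Finally I would invert the Fourier transform and bound pointwise by the $L^1$ norm of $\widehat{\phi}$, exactly as at the end of Lemma~\ref{lemma2.3}. The hypothesis that $_{-\infty}D_x^{\alpha+3}u$ and its Fourier transform are in $L_1(\mathbb{R})$ is precisely what is needed to make $|\omega|^{\alpha+3}|\widehat u(\omega)|$ integrable (via $\mathcal{F}(_{-\infty}D_x^{\alpha+3}u)(\omega)=(-i\omega)^{\alpha+3}\widehat u(\omega)$), yielding $|_{-\infty}D_x^{\alpha}u(x)-{}_{3L}A_{p,q,r,s}^{\alpha}u(x)|=\mathcal{O}(h^3)$. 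The only step that could be considered an obstacle is the algebraic verification that the stated weights simultaneously normalize to $1$ and annihilate the $z^2$ term; everything else is a mechanical transcription of the previous Fourier-symbol computation.
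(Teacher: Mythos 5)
Your proposal is correct and follows essentially the same route as the paper: Fourier transform the weighted combination, use the symbol expansion of $\mathcal{F}({}_{2L}A_{p,q}^{\alpha}u)$ from Theorem~\ref{theorem2.1}, observe that the weights sum to $1$ and cancel the $z^2$ coefficient, and then conclude by the same $L_1$ inversion argument as in Lemma~\ref{lemma2.3}. The paper simply writes out the resulting $z^3$ coefficient explicitly (since it is reused in Theorem~\ref{theorem2.3}), whereas you correctly note its exact value is immaterial for the $\mathcal{O}(h^3)$ estimate here.
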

\begin{proof}
 By the proof of Theorem  \ref{theorem2.1}, we have
 \begin{equation*}
\begin{split}
\mathcal{F}(_{2L}A_{p,q}^{\alpha}u)(\omega)
=(-i\omega)^{\alpha}\Big[ 1-\frac{3pq+2\alpha}{6}z^2-\frac{2pq(p+q)-3\alpha}{12}z^3+\mathcal{O}(z^4) \Big] \widehat{u}(\omega)
  \end{split}
\end{equation*}
and
 \begin{equation*}
\begin{split}
\mathcal{F}(_{2L}A_{r,s}^{\alpha}u)(\omega)
=(-i\omega)^{\alpha}\Big[ 1-\frac{3rs+2\alpha}{6}z^2-\frac{2rs(r+s)-3\alpha}{12}z^3+\mathcal{O}(z^4) \Big] \widehat{u}(\omega).
  \end{split}
\end{equation*}
Then there exists
 \begin{equation*}
\begin{split}
&\mathcal{F}(_{3L}A_{p,q,r,s}^{\alpha}u)(\omega)\\
&\quad =(-i\omega)^{\alpha}\Big[ 1+\frac{6pqrs(r+s-p-q)+4\alpha\big[rs(r+s)-pq(p+q)\big]+9\alpha(rs-pq)}{36(rs-pq)}z^3
+\mathcal{O}(z^4) \Big] \widehat{u}(\omega),
  \end{split}
\end{equation*}
and by the similar way to the proof of Lemma  \ref{lemma2.3} we get
\begin{equation*}
\begin{split}
  &_{-\infty}D_x^{\alpha}u(x)=\,_{3L}A_{p,q,r,s}^{\alpha}u(x)+\mathcal{O}(h^3).
 \end{split}
\end{equation*}
\end{proof}

\begin{theorem}\label{theorem2.3}(Fourth order approximations for left Riemann-Liouville  derivative)
 Let $u$, $_{-\infty}D_x^{\alpha+4}u(x)$ with $\alpha \in (1,2)$ and their Fourier transforms  belong to $L_1(\mathbb{R})$. Denote that
 \begin{equation}\label{2.8}
_{4L}A_{p,q,r,s,\overline{p},\overline{q},\overline{r},\overline{s}}^{\alpha}u(x)=w_{p,q,r,s}\,_{3L}A_{p,q,r,s}^{\alpha}u(x)
+w_{\overline{p},\overline{q},\overline{r},\overline{s}}\,_{3L}A_{\overline{p},\overline{q},\overline{r},\overline{s}}^{\alpha}u(x),
\end{equation}
where $_{3L}A_{p,q,r,s}^{\alpha}$ and $_{3L}A_{\overline{p},\overline{q},\overline{r},\overline{s}}^{\alpha}$  are defined by (\ref{2.7});
 and
\begin{equation}\label{2.9}
\begin{split}
w_{p,q,r,s}=\frac{a_{p,q,r,s}\,\overline{b}_{\overline{p},\overline{q},\overline{r},\overline{s}}}
{a_{p,q,r,s}\,\overline{b}_{\overline{p},\overline{q},\overline{r},\overline{s}}-\overline{a}_{\overline{p},\overline{q},\overline{r},\overline{s}}\,b_{p,q,r,s}};
 \end{split}
\end{equation}
\begin{equation}\label{2.10}
\begin{split}
w_{\overline{p},\overline{q},\overline{r},\overline{s}}
=\frac{\overline{a}_{\overline{p},\overline{q},\overline{r},\overline{s}}\,b_{p,q,r,s}}
{\overline{a}_{\overline{p},\overline{q},\overline{r},\overline{s}}\,b_{p,q,r,s}-a_{p,q,r,s}\,\overline{b}_{\overline{p},\overline{q},\overline{r},\overline{s}}};
 \end{split}
\end{equation}
with
\begin{equation*}
\begin{split}
a_{p,q,r,s}=rs-pq; \quad
b_{p,q,r,s}=6pqrs(r+s-p-q)+4\alpha\big[rs(r+s)-pq(p+q)\big]+9\alpha(rs-pq);
 \end{split}
\end{equation*}
\begin{equation*}
\begin{split}
\overline{a}_{\overline{p},\overline{q},\overline{r},\overline{s}}=\overline{r}\,\overline{s}-\overline{p}\,\overline{q}; \quad
\overline{b}_{\overline{p},\overline{q},\overline{r},\overline{s}}=
6\overline{p}\,\overline{q}\,\overline{r}\,\overline{s}(\overline{r}+\overline{s}-\overline{p}-\overline{q})
+4\alpha\big[\overline{r}\,\overline{s}(\overline{r}+\overline{s})-\overline{p}\,\overline{q}(\overline{p}+\overline{q})\big]
+9\alpha(\overline{r}\,\overline{s}-\overline{p}\,\overline{q});
 \end{split}
\end{equation*}
and
$a_{p,q,r,s}\,\overline{b}_{\overline{p},\overline{q},\overline{r},\overline{s}} \neq \overline{a}_{\overline{p},\overline{q},\overline{r},\overline{s}}\,b_{p,q,r,s}$; $p$, $q$, $r$, $s$;  $\overline{p}$, $\overline{q}$, $\overline{r}$, $\overline{s}$ are integers. Then
\begin{equation*}
\begin{split}
  _{-\infty}D_x^{\alpha}u(x)=\,_{4L}A_{p,q,r,s,\overline{p},\overline{q},\overline{r},\overline{s}}^{\alpha}u(x)+\mathcal{O}(h^4).
 \end{split}
\end{equation*}
\end{theorem}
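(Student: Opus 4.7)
The plan is to extend the symbol-level cancellation strategy developed in Lemma \ref{lemma2.3} and Theorems \ref{theorem2.1}--\ref{theorem2.2} by one more order. From the expansion established inside the proof of Theorem \ref{theorem2.2}, the Fourier symbol of $_{3L}A_{p,q,r,s}^{\alpha}$ has the form
\begin{equation*}
\mathcal{F}(_{3L}A_{p,q,r,s}^{\alpha}u)(\omega) = (-i\omega)^{\alpha}\left[1 + \frac{b_{p,q,r,s}}{36\,a_{p,q,r,s}}\,z^3 + \mathcal{O}(z^4)\right]\widehat{u}(\omega),
\end{equation*}
where $z=-i\omega h$ and $a_{p,q,r,s}$, $b_{p,q,r,s}$ are precisely the quantities named in the statement; an identical expansion with $(p,q,r,s)$ replaced by $(\overline{p},\overline{q},\overline{r},\overline{s})$ holds for $_{3L}A_{\overline{p},\overline{q},\overline{r},\overline{s}}^{\alpha}$. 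The goal is to choose the two weights in (\ref{2.8}) so that the symbol of the combined operator retains the constant term $1$ (consistency with $_{-\infty}D_x^{\alpha}$) while its $z^3$ coefficient vanishes (killing the leading error term).

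This reduces to the $2\times 2$ linear system consisting of $w_{p,q,r,s} + w_{\overline{p},\overline{q},\overline{r},\overline{s}} = 1$ together with
\begin{equation*}
\frac{b_{p,q,r,s}}{a_{p,q,r,s}}\,w_{p,q,r,s} + \frac{\overline{b}_{\overline{p},\overline{q},\overline{r},\overline{s}}}{\overline{a}_{\overline{p},\overline{q},\overline{r},\overline{s}}}\,w_{\overline{p},\overline{q},\overline{r},\overline{s}} = 0.
\end{equation*}
The hypothesis $a_{p,q,r,s}\,\overline{b}_{\overline{p},\overline{q},\overline{r},\overline{s}} \neq \overline{a}_{\overline{p},\overline{q},\overline{r},\overline{s}}\,b_{p,q,r,s}$ guarantees that the determinant is nonzero, and Cramer's rule then delivers precisely the weights (\ref{2.9}) and (\ref{2.10}).

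With these weights in hand, the symbol of $_{4L}A_{p,q,r,s,\overline{p},\overline{q},\overline{r},\overline{s}}^{\alpha}$ becomes $(-i\omega)^{\alpha}[1+\mathcal{O}(z^4)]$, so the remainder has Fourier transform $\widehat{\phi}(\omega)=(-i\omega)^{\alpha}\mathcal{O}((\omega h)^4)\widehat{u}(\omega)$ satisfying $|\widehat{\phi}(\omega)| \leq c\,|\omega|^{\alpha+4}|\widehat{u}(\omega)|\,h^4$. Invoking the hypothesis that $u$ and $_{-\infty}D_x^{\alpha+4}u$ (and their Fourier transforms) lie in $L_1(\mathbb{R})$, the Fourier-inversion estimate used at the end of Lemma \ref{lemma2.3} upgrades verbatim to
\begin{equation*}
\bigl|\,_{-\infty}D_x^{\alpha}u(x) - \,_{4L}A_{p,q,r,s,\overline{p},\overline{q},\overline{r},\overline{s}}^{\alpha}u(x)\bigr| \leq \frac{1}{2\pi}\int_{\mathbb{R}}|\widehat{\phi}(\omega)|\,d\omega = \mathcal{O}(h^4).
\end{equation*}

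The main obstacle is almost purely algebraic bookkeeping: one must carefully confirm that the $z^3$ coefficient of the symbol of $_{3L}A_{p,q,r,s}^{\alpha}$ really equals $b_{p,q,r,s}/(36\,a_{p,q,r,s})$ with exactly the combinatorial forms of $a$ and $b$ spelled out in the statement, and that solving the $2\times 2$ system produces the somewhat intricate expressions (\ref{2.9})--(\ref{2.10}). Once that symbolic manipulation is in place, the analytic step is a direct transcription of the Fourier-inversion argument already recorded in Lemma \ref{lemma2.3}, with the power of $h$ simply bumped from $h^2$ to $h^4$.
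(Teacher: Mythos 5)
Your proposal is correct and follows essentially the same route as the paper: expand the Fourier symbols of the two third-order operators to identify the $z^3$ coefficient as $b_{p,q,r,s}/(36\,a_{p,q,r,s})$, choose the weights so the leading term stays $1$ while the $z^3$ terms cancel, and conclude by the Fourier-inversion estimate of Lemma \ref{lemma2.3}. The only cosmetic difference is that you \emph{derive} the weights (\ref{2.9})--(\ref{2.10}) from the $2\times 2$ system, whereas the paper simply substitutes them and verifies the cancellation.
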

\begin{proof}
According to the proof of Theorem  \ref{theorem2.2}, we have
 \begin{equation*}
\begin{split}
&\mathcal{F}(_{3L}A_{p,q,r,s}^{\alpha}u)(\omega)\\
&\quad =(-i\omega)^{\alpha}\Big[ 1+\frac{6pqrs(r+s-p-q)+4\alpha\big[rs(r+s)-pq(p+q)\big]+9\alpha(rs-pq)}{36(rs-pq)}z^3
+\mathcal{O}(z^4) \Big] \widehat{u}(\omega)
  \end{split}
\end{equation*}
and
 \begin{equation*}
\begin{split}
&\mathcal{F}(_{3L}A_{\overline{p},\overline{q},\overline{r},\overline{s}}^{\alpha}u)(\omega)\\
&\quad =(-i\omega)^{\alpha}\Big[ 1+\frac{6\overline{p}\,\overline{q}\,\overline{r}\,\overline{s}(\overline{r}+\overline{s}-\overline{p}-\overline{q})
+4\alpha\big[\overline{r}\,\overline{s}(\overline{r}+\overline{s})-\overline{p}\,\overline{q}(\overline{p}+\overline{q})\big]
+9\alpha(\overline{r}\,\overline{s}-\overline{p}\,\overline{q})}{36(\overline{r}\,\overline{s}-\overline{p}\,\overline{q})}z^3
+\mathcal{O}(z^4) \Big] \widehat{u}(\omega).
  \end{split}
\end{equation*}
Then there exists
 \begin{equation*}
\begin{split}
&\mathcal{F}(_{4L}A_{p,q,r,s,\overline{p},\overline{q},\overline{r},\overline{s}}^{\alpha}u)(\omega) =(-i\omega)^{\alpha}\left( 1+\mathcal{O}(z^4) \right) \widehat{u}(\omega),
  \end{split}
\end{equation*}
and by the similar way to the proof of Lemma  \ref{lemma2.3} we get
\begin{equation*}
\begin{split}
  _{-\infty}D_x^{\alpha}u(x)=\,_{4L}A_{p,q,r,s,\overline{p},\overline{q},\overline{r},\overline{s}}^{\alpha}u(x)+\mathcal{O}(h^4).
 \end{split}
\end{equation*}
\end{proof}

For the right Riemann-Liouville fractional derivative, denote that
 \begin{equation}\label{2.11}
_RA_{p}^{\alpha}u(x)=\frac{1}{h^{\alpha}}\sum_{k=0}^{\infty}q_k^{\alpha}u(x+(k-p)h),
\end{equation}
where $q_k^{\alpha}$ is defined by (\ref{2.2}) and $p$  an integer. Using the same way as Theorems \ref{theorem2.1}-\ref{theorem2.3}, we can obtain the following results. In particular, the coefficients in (\ref{2.12}) are completely the same as the ones in (\ref{2.6}); the coefficients in (\ref{2.13}) the same as the ones in (\ref{2.7}); and the coefficients in (\ref{2.14}) the same as the ones in (\ref{2.8}).

\begin{theorem}\label{theorem2.4}(Second order approximations for right Riemann-Liouville  derivative)
 Let $u$, $_{x}D_{\infty}^{\alpha+2}u(x)$ with $\alpha \in (1,2)$ and their Fourier transforms  belong to $L_1(\mathbb{R})$, and denote that
 \begin{equation}\label{2.12}
_{2R}A_{p,q}^{\alpha}u(x)=w_p\,_RA_{p}^{\alpha}u(x)+w_q\,_RA_{q}^{\alpha}u(x),
\end{equation}
then
\begin{equation*}
\begin{split}
  &_{x}D_{\infty}^{\alpha}u(x)=\,_{2R}A_{p,q}^{\alpha}u(x)+\mathcal{O}(h^2).
 \end{split}
\end{equation*}
\end{theorem}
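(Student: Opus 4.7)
The strategy is to mirror the argument for Theorem~\ref{theorem2.1} verbatim, with only the symbolic adjustments required by the right-shift. First I would compute the Fourier symbol of the building block $_RA_p^{\alpha}$ defined in (\ref{2.11}). Using the shift identity $\mathcal{F}(u(\cdot+(k-p)h))(\omega)=e^{-i\omega(k-p)h}\widehat{u}(\omega)$ together with the generating-function identity (\ref{2.1}) specialized at $\zeta=e^{-i\omega h}$, and the factorization $1-\tfrac{1}{3}e^{-i\omega h}=\tfrac{2}{3}(1+\tfrac{1}{2}(1-e^{-i\omega h}))$ that is already implicit in the proof of Lemma~\ref{lemma2.3}, I would obtain
\begin{equation*}
\mathcal{F}(_RA_p^{\alpha} u)(\omega)=(i\omega)^{\alpha} e^{p\bar z}\left(\frac{1-e^{-\bar z}}{\bar z}\right)^{\alpha}\left(1+\tfrac{1}{2}(1-e^{-\bar z})\right)^{\alpha}\widehat{u}(\omega),
\end{equation*}
where $\bar z=i\omega h$, and Lemma~\ref{lemma2.2} supplies the identification $\mathcal{F}(_xD_\infty^{\alpha} u)(\omega)=(i\omega)^{\alpha}\widehat{u}(\omega)$.

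This formula is formally identical to the one derived for $\mathcal{F}(_LA_p^{\alpha}u)$ inside the proof of Lemma~\ref{lemma2.3}, with $(-i\omega)^{\alpha}$ replaced by $(i\omega)^{\alpha}$ and $z=-i\omega h$ replaced by $\bar z=i\omega h$. Consequently the Taylor expansion (\ref{2.5}) of the bracketed factor carries over unchanged, yielding
\begin{equation*}
\mathcal{F}(_RA_p^{\alpha} u)(\omega)=(i\omega)^{\alpha}\Bigl[1+p\bar z+\tfrac{3p^2-2\alpha}{6}\bar z^2+\tfrac{2p^3+\alpha(3-4p)}{12}\bar z^3+\mathcal{O}(\bar z^4)\Bigr]\widehat{u}(\omega),
\end{equation*}
and an analogous expression with $p$ replaced by $q$. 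The weights $w_p=q/(q-p)$ and $w_q=p/(p-q)$ satisfy $w_p+w_q=1$ and $w_p p+w_q q=0$, so the weighted sum preserves the constant $1$, annihilates the $\bar z^1$ term, and leaves
\begin{equation*}
\mathcal{F}(_{2R}A_{p,q}^{\alpha} u)(\omega)=(i\omega)^{\alpha}\Bigl[1-\tfrac{3pq+2\alpha}{6}\bar z^2+\mathcal{O}(\bar z^3)\Bigr]\widehat{u}(\omega).
\end{equation*}

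Subtracting from $\mathcal{F}(_xD_\infty^{\alpha} u)(\omega)=(i\omega)^{\alpha}\widehat{u}(\omega)$, I would bound the symbol of the error pointwise by $C|\omega|^{\alpha+2}|\widehat{u}(\omega)|\,h^2$, then invert the Fourier transform exactly as in the closing step of Lemma~\ref{lemma2.3}. The hypothesis that $_xD_\infty^{\alpha+2}u$ and its Fourier transform lie in $L_1(\mathbb{R})$, together with Lemma~\ref{lemma2.2}, makes this majorant integrable and delivers the pointwise estimate
\begin{equation*}
\bigl|\,_xD_\infty^{\alpha} u(x)-\,_{2R}A_{p,q}^{\alpha} u(x)\bigr|=\mathcal{O}(h^2).
\end{equation*}
The only real obstacle is careful bookkeeping of the sign change induced by the right-shift and the correct branch of $(i\omega)^{\alpha}$; once these are aligned, every coefficient in the expansion, and hence every weight, is inherited from Theorem~\ref{theorem2.1} without modification, confirming the equality of coefficients announced in the remark preceding the statement.
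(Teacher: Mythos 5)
Your proposal is correct and is exactly the route the paper intends: the paper gives no separate proof of Theorem~\ref{theorem2.4}, stating only that it follows ``using the same way as Theorems \ref{theorem2.1}--\ref{theorem2.3},'' and your mirrored Fourier-symbol computation (with $(i\omega)^{\alpha}$, $\bar z=i\omega h$, and the unchanged expansion coefficients) is precisely that argument carried out in detail.
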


\begin{theorem}\label{theorem2.5}(Third order approximations for right Riemann-Liouville  derivative)
 Let $u$, $_{x}D_{\infty}^{\alpha+3}u(x)$ with $\alpha \in (1,2)$ and their Fourier transforms  belong to $L_1(\mathbb{R})$, and denote that
 \begin{equation}\label{2.13}
_{3R}A_{p,q,r,s}^{\alpha}u(x)=w_{p,q}\,_{2R}A_{p,q}^{\alpha}u(x)+w_{r,s}\,_{2R}A_{r,s}^{\alpha}u(x),
\end{equation}
then
\begin{equation*}
\begin{split}
  &_{x}D_{\infty}^{\alpha}u(x)=\,_{3R}A_{p,q,r,s}^{\alpha}u(x)+\mathcal{O}(h^3).
 \end{split}
\end{equation*}
\end{theorem}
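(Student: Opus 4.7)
The plan is to mirror the Fourier-transform argument used for the left-sided operator in Lemma \ref{lemma2.3} and Theorems \ref{theorem2.1}--\ref{theorem2.2}, exploiting the fact that $_RA_p^{\alpha}$ differs from $_LA_p^{\alpha}$ only by the reversal of the shift direction.

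First I would compute the Fourier symbol of $_RA_p^{\alpha}$. Using $\mathcal{F}(u(x+ch))(\omega) = e^{-i\omega c}\widehat{u}(\omega)$, together with the generating-function identity $\sum_k q_k^{\alpha}\zeta^k = (3/2 - 2\zeta + \zeta^2/2)^{\alpha}$ evaluated at $\zeta = e^{-i\omega h}$, and setting $z = i\omega h$, I would obtain
\[
\mathcal{F}(_RA_p^{\alpha}u)(\omega) = (i\omega)^{\alpha}\, e^{pz}\left(\frac{1-e^{-z}}{z}\right)^{\alpha}\!\left(1+\tfrac{1}{2}(1-e^{-z})\right)^{\alpha}\widehat{u}(\omega),
\]
which is formally identical to the left-sided expression of Lemma \ref{lemma2.3}, except that $(-i\omega)^{\alpha}$ is replaced by $(i\omega)^{\alpha}$ (consistent with Lemma \ref{lemma2.2} applied to the right Riemann-Liouville derivative) and $z = i\omega h$ rather than $z = -i\omega h$.

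Next I would reuse the algebraic Taylor expansion already computed in the proof of Lemma \ref{lemma2.3}, namely that the bracketed factor equals $1 + pz + \frac{3p^2-2\alpha}{6}z^2 + \frac{2p^3 + \alpha(3-4p)}{12}z^3 + \mathcal{O}(z^4)$. Since this expansion is purely algebraic in $z$ and $p$, it depends neither on the sign of $\operatorname{Im}(z)$ nor on whether the symbol carries $(i\omega)^\alpha$ or $(-i\omega)^\alpha$; consequently the weights $w_p=q/(q-p)$, $w_q=p/(p-q)$ from Theorem \ref{theorem2.1} annihilate the $z$-coefficient of $\mathcal{F}(_{2R}A_{p,q}^{\alpha}u)(\omega)$ in exactly the same way as in the left-sided case. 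A second, completely parallel linear combination with the weights $w_{p,q}$ and $w_{r,s}$ from Theorem \ref{theorem2.2} then eliminates the $z^2$-term, yielding
\[
\mathcal{F}(_{3R}A_{p,q,r,s}^{\alpha}u)(\omega) = (i\omega)^{\alpha}\bigl(1+\mathcal{O}(z^3)\bigr)\widehat{u}(\omega).
\]

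Finally I would subtract $\mathcal{F}(_xD_{\infty}^{\alpha}u)(\omega) = (i\omega)^{\alpha}\widehat{u}(\omega)$, bound the resulting residual by $C\,|\omega|^{\alpha+3}|\widehat{u}(\omega)|\,h^3$, and pass to $L^\infty$ in $x$ through the inverse Fourier transform using the $L_1(\mathbb{R})$ hypotheses on $u$, $_xD_{\infty}^{\alpha+3}u$ and their Fourier transforms, exactly as in the closing estimate of Lemma \ref{lemma2.3}. The only real piece of bookkeeping is verifying that the sign flip $z\mapsto -z$ between left- and right-sided symbols does not perturb any of the polynomial coefficients used to cancel $z$ and $z^2$; this is automatic because cancellation is imposed coefficient by coefficient in the variable $z$, so the weight formulae from Theorems \ref{theorem2.1}--\ref{theorem2.2} transfer verbatim, which is why the statement records that the coefficients in (\ref{2.13}) coincide with those in (\ref{2.7}).
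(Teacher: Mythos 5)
Your proposal is correct and follows essentially the same route the paper takes: the paper proves Theorems \ref{theorem2.1}--\ref{theorem2.3} via the Fourier symbol and then simply asserts that Theorems \ref{theorem2.4}--\ref{theorem2.6} follow ``using the same way,'' which is precisely the argument you have written out (the symbol of $_RA_p^{\alpha}$ is the left-sided symbol with $(-i\omega)^{\alpha}$ replaced by $(i\omega)^{\alpha}$ and $z=i\omega h$, so the same polynomial expansion, the same weights, and the same $L_1$ inversion estimate apply verbatim). No gaps.
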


\begin{theorem}\label{theorem2.6}(Fourth order approximations for right Riemann-Liouville  derivative)
 Let $u$, $_{x}D_{\infty}^{\alpha+4}u(x)$ with $\alpha \in (1,2)$ and their Fourier transforms  belong to $L_1(\mathbb{R})$, and denote that
 \begin{equation}\label{2.14}
_{4R}A_{p,q,r,s,\overline{p},\overline{q},\overline{r},\overline{s}}^{\alpha}u(x)=w_{p,q,r,s}\,_{3R}A_{p,q,r,s}^{\alpha}u(x)
+w_{\overline{p},\overline{q},\overline{r},\overline{s}}\,_{3R}A_{\overline{p},\overline{q},\overline{r},\overline{s}}^{\alpha}u(x),
\end{equation}
then
\begin{equation*}
\begin{split}
  _{x}D_{\infty}^{\alpha}u(x)=\,_{4R}A_{p,q,r,s,\overline{p},\overline{q},\overline{r},\overline{s}}^{\alpha}u(x)+\mathcal{O}(h^4).
 \end{split}
\end{equation*}
\end{theorem}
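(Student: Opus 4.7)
The plan is to mirror the three-step ladder used for the left derivative (Lemma 2.3 $\to$ Theorem 2.1 $\to$ Theorem 2.2 $\to$ Theorem 2.3), pushing everything through the Fourier transform on $\mathbb{R}$. The only structural change is the sign in the shift $x+(k-p)h$ rather than $x-(k-p)h$, and the target symbol $(i\omega)^\alpha$ from Lemma \ref{lemma2.2} rather than $(-i\omega)^\alpha$.

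First I would compute, exactly as in the proof of Lemma \ref{lemma2.3}, the Fourier symbol of $_RA_p^\alpha$. Using $\mathcal{F}(u(x+a))(\omega)=e^{-i\omega a}\widehat u(\omega)$ together with the generating function identity $\sum_{k=0}^\infty q_k^\alpha \zeta^k=(\tfrac{3}{2}-2\zeta+\tfrac{1}{2}\zeta^2)^\alpha$ evaluated at $\zeta=e^{-i\omega h}$, I obtain
\begin{equation*}
\mathcal{F}({}_RA_p^{\alpha}u)(\omega)=(i\omega)^{\alpha}\,e^{pz}\!\left(\frac{1-e^{-z}}{z}\right)^{\!\alpha}\!\!\left(1+\tfrac{1}{2}(1-e^{-z})\right)^{\!\alpha}\widehat u(\omega),\qquad z=i\omega h.
\end{equation*}
The bracketed factor admits exactly the Taylor expansion used in (\ref{2.5}), so
\begin{equation*}
\mathcal{F}({}_RA_p^{\alpha}u)(\omega)=(i\omega)^{\alpha}\Big[1+pz+\tfrac{3p^2-2\alpha}{6}z^2+\tfrac{2p^3+\alpha(3-4p)}{12}z^3+\mathcal{O}(z^4)\Big]\widehat u(\omega),
\end{equation*}
which is the right-side analogue of the expansion in the proof of Lemma \ref{lemma2.3}.

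Next I would form $_{2R}A_{p,q}^\alpha$, $_{3R}A_{p,q,r,s}^\alpha$, and finally $_{4R}A_{p,q,r,s,\overline p,\overline q,\overline r,\overline s}^\alpha$ by the same linear combinations as in (\ref{2.6}), (\ref{2.7}), (\ref{2.8}). Because the coefficient of $z^n$ in the bracket above coincides (as a polynomial in the shift parameters and $\alpha$) with the corresponding coefficient in the left-derivative case, the weights $w_p,w_q$; $w_{p,q},w_{r,s}$; $w_{p,q,r,s},w_{\overline p,\overline q,\overline r,\overline s}$ defined via $a_{p,q,r,s}$, $b_{p,q,r,s}$, $\overline a_{\overline p,\overline q,\overline r,\overline s}$, $\overline b_{\overline p,\overline q,\overline r,\overline s}$ in (\ref{2.9})--(\ref{2.10}) annihilate the $z$, $z^2$, and $z^3$ terms exactly as they do for $_{4L}A$. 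Hence
\begin{equation*}
\mathcal{F}({}_{4R}A_{p,q,r,s,\overline p,\overline q,\overline r,\overline s}^{\alpha}u)(\omega)=(i\omega)^{\alpha}\big(1+\mathcal{O}(z^4)\big)\widehat u(\omega)=\mathcal{F}({}_{x}D_\infty^{\alpha}u)(\omega)+\widehat\phi(\omega),
\end{equation*}
with $|\widehat\phi(\omega)|\le C|\omega|^{\alpha+4}|\widehat u(\omega)|\,h^4$.

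Finally, under the assumption that $u$ and $_{x}D_\infty^{\alpha+4}u$ and their Fourier transforms are in $L_1(\mathbb R)$, I would apply the Fourier inversion bound (as at the end of the proof of Lemma \ref{lemma2.3}) to conclude
\begin{equation*}
|{}_{x}D_\infty^{\alpha}u(x)-{}_{4R}A_{p,q,r,s,\overline p,\overline q,\overline r,\overline s}^{\alpha}u(x)|\le\frac{1}{2\pi}\int_{\mathbb R}|\widehat\phi(\omega)|\,d\omega=\mathcal{O}(h^4).
\end{equation*}
The only nontrivial point is bookkeeping: verifying that with the sign change $z=-i\omega h\mapsto z=i\omega h$ the Taylor coefficients in the shift parameters $p,q,r,s,\overline p,\overline q,\overline r,\overline s$ are identical to the left-derivative case, so that the same weight formulas (\ref{2.9})--(\ref{2.10}) suffice; everything else is a direct transcription of the arguments already given in Theorems \ref{theorem2.1}--\ref{theorem2.3}.
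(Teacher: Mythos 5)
Your proposal is correct and follows exactly the route the paper intends: the paper proves Theorem \ref{theorem2.6} only by remarking that it goes ``the same way as Theorems \ref{theorem2.1}--\ref{theorem2.3}'' with identical coefficients, and your argument supplies precisely the omitted details — the symbol of $_RA_p^\alpha$ with $z=i\omega h$ reproduces the expansion (\ref{2.5}) verbatim, so the same weights cancel the $z$, $z^2$, $z^3$ terms and the Fourier inversion bound of Lemma \ref{lemma2.3} yields the $\mathcal{O}(h^4)$ error.
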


All the above schemes are applicable to bounded domain, say, $(x_L,\,x_R)$, after performing zero extensions to the functions considered.  Let $u(x)$ be the zero extended function from the bounded domain $(x_L,\,x_R)$, and satisfy the requirements of the above corresponding theorems (Theorems \ref{theorem2.1}-\ref{theorem2.6}). Denoting
 \begin{equation}\label{2.15}
  \begin{split}
&_L\widetilde{A}_{p}^{\alpha}u(x)=\frac{1}{h^{\alpha}}\sum_{k=0}^{[\frac{x-x_L}{h}]+p}q_k^{\alpha}u(x-(k-p)h),
\end{split}
\end{equation}
 then
 \begin{equation}\label{2.16}
\begin{split}
_{x_L}D_{x}^{\alpha}u(x)&=\,_L\widetilde{A}_{p}^{\alpha}u(x)+\mathcal{O}(h), ~\,\quad p \neq 0,\\
                            _{x_L}D_{x}^{\alpha}u(x) &=\,_L\widetilde{A}_{p}^{\alpha}u(x)+\mathcal{O}(h^2), \quad p = 0;
 \end{split}
 \end{equation}
\begin{equation}\label{2.17}
_{x_L}D_{x}^{\alpha}u(x)=\,_{2L}\widetilde{A}_{p,q}^{\alpha}u(x)+\mathcal{O}(h^2),~~~~ {\rm where}~~  _{2L}\widetilde{A}_{p,q}^{\alpha}u(x)=w_p\,_L\widetilde{A}_{p}^{\alpha}u(x)+w_q\,_L\widetilde{A}_{q}^{\alpha}u(x);
 \end{equation}
\begin{equation}\label{2.18}
_{x_L}D_{x}^{\alpha}u(x)=\,_{3L}\widetilde{A}_{p,q,r,s}^{\alpha}u(x)+\mathcal{O}(h^3), ~~~~ {\rm where}~~  _{3L}\widetilde{A}_{p,q,r,s}^{\alpha}u(x)=w_{p,q}\,_{2L}\widetilde{A}_{p,q}^{\alpha} u(x)+w_{r,s}\,_{2L}\widetilde{A}_{p,q}^{\alpha} u(x);
\end{equation}
and
\begin{equation}\label{2.19}
_{x_L}D_{x}^{\alpha}u(x)=\,_{4L}\widetilde{A}_{p,q,r,s,\overline{p},\overline{q},\overline{r},\overline{s}}^{\alpha}u(x)+\mathcal{O}(h^4),
\end{equation}
where $ _{4L}\widetilde{A}_{p,q,r,s,\overline{p},\overline{q},\overline{r},\overline{s}}^{\alpha}u(x)
=w_{p,q,r,s}\,_{3L}\widetilde{A}_{p,q,r,s}^{\alpha} u(x)
+w_{\overline{p},\overline{q},\overline{r},\overline{s}}\,_{3L}\widetilde{A}_{\overline{p},\overline{q},\overline{r},\overline{s}}^{\alpha} u(x)$.

Denoting $x_i=x_L+ih$, $i=-m,\ldots, 0,1,\ldots,N_x-1,N_x,\ldots,N_x+m$,
and $h=(x_R-x_L)/N_x$ being the uniform space stepsize, it can be noted that
 \begin{equation*}
u(x_i)=0, \quad {\rm for}~~ i=-m,-m+1,\ldots,0 \quad {\rm and }\quad   i=N_x, N_x+1,\ldots, N_x+m,
 \end{equation*}
where
\begin{equation}\label{2.20}
  m=max(abs(p,q,r,s,\overline{p},\overline{q},\overline{r},\overline{s})).
\end{equation}
 Then the approximation operator of (\ref{2.15}) can be described as
 \begin{equation}\label{2.21}
  \begin{split}
&_L\widetilde{A}_{p}^{\alpha}u(x_i)=\frac{1}{h^{\alpha}}\sum_{k=0}^{i+p}q_k^{\alpha}u(x_{i-k+p})
=\frac{1}{h^{\alpha}}\sum_{k=m-p}^{i+m}q_{k+p-m}^{\alpha}u(x_{i-k+m})
=\frac{1}{h^{\alpha}}\sum_{k=0}^{i+m}q_{k+p-m}^{\alpha}u(x_{i-k+m}),
\end{split}
\end{equation}
where $q_{k+p-m}^{\alpha}=0$, when $k+p-m<0$, and $p$ is an integer.
Then
 \begin{equation}\label{2.22}
\begin{split}
_{x_L}D_{x}^{\alpha}u(x_i)&=\,_L\widetilde{A}_{p}^{\alpha}u(x_i)+\mathcal{O}(h)
=\frac{1}{h^{\alpha}}\sum_{k=0}^{i+m}q_{k+p-m}^{\alpha}u(x_{i-k+m})+\mathcal{O}(h),      ~\quad p \neq 0,\\
_{x_L}D_{x}^{\alpha}u(x_i)&=\,_L\widetilde{A}_{p}^{\alpha}u(x_i)+\mathcal{O}(h^2)
=\frac{1}{h^{\alpha}}\sum_{k=0}^{i+m}q_{k+p-m}^{\alpha}u(x_{i-k+m})+\mathcal{O}(h^2),     \quad p = 0;
 \end{split}
 \end{equation}
\begin{equation}\label{2.23}
 \begin{split}
_{x_L}D_{x}^{\alpha}u(x_i)&= \,_{2L}\widetilde{A}_{p,q}^{\alpha}u(x_i) +\mathcal{O}(h^2)
=\frac{1}{h^{\alpha}}\sum_{k=0}^{i+m}(w_pq_{k+p-m}^{\alpha}+w_qq_{k+q-m}^{\alpha})u(x_{i-k+m}) +\mathcal{O}(h^2);
 \end{split}
\end{equation}
\begin{equation}\label{2.24}
\begin{split}
_{x_L}D_{x}^{\alpha}u(x_i)&\!=\!\,_{3L}\widetilde{A}_{p,q,r,s}^{\alpha}u(x)+\mathcal{O}(h^3)\\
&\!=\!\frac{1}{h^{\alpha}}\!\!\sum_{k=0}^{i+m}(w_{p,q}w_pq_{k+p-m}^{\alpha}\!+\!w_{p,q}w_qq_{k+q-m}^{\alpha}
\!+\!w_{r,s}w_rq_{k+r-m}^{\alpha}\!+\!w_{r,s}w_sq_{k+s-m}^{\alpha})u(x_{i-k+m})\! \\
 & ~~~~ +\!\mathcal{O}(h^3);
 \end{split}
\end{equation}
\begin{equation}\label{2.25}
\begin{split}
&_{x_L}D_{x}^{\alpha}u(x_i)=\,_{4L}\widetilde{A}_{p,q,r,s,\overline{p},\overline{q},\overline{r},\overline{s}}^{\alpha}u(x_i)+\mathcal{O}(h^4)
=\frac{1}{h^{\alpha}}\sum_{k=0}^{i+m}\varphi_k^{\alpha}u(x_{i-k+m})+\mathcal{O}(h^4),
 \end{split}
\end{equation}
where
\begin{equation}\label{2.26}
\begin{split}
\varphi_k^{\alpha}&=w_{p,q,r,s}  w_{p,q}w_pq_{k+p-m}^{\alpha}+ w_{p,q,r,s}w_{p,q}w_qq_{k+q-m}^{\alpha}
+w_{p,q,r,s}w_{r,s}w_rq_{k+r-m}^{\alpha}\\
&\quad+w_{p,q,r,s}w_{r,s}w_sq_{k+s-m}^{\alpha} +w_{\overline{p},\overline{q},\overline{r},\overline{s}}w_{\overline{p},\overline{q}}w_{\overline{p}}q_{k+\overline{p}-m}^{\alpha}
+w_{\overline{p},\overline{q},\overline{r},\overline{s}}w_{\overline{p},\overline{q}}w_{\overline{q}}q_{k+\overline{q}-m}^{\alpha}
 \\
&
\quad+w_{\overline{p},\overline{q},\overline{r},\overline{s}}w_{\overline{r},\overline{s}}w_{\overline{r}}q_{k+\overline{r}-m}^{\alpha}+w_{\overline{p},\overline{q},\overline{r},\overline{s}}w_{\overline{r},\overline{s}}w_{\overline{s}}q_{k+\overline{s}-m}^{\alpha}.
\end{split}
\end{equation}
Taking $U=[u({x_1}),u({x_2}),\cdots,u({x_{N_x-1}})]^{\rm T}$,  then (\ref{2.21}) can be rewritten as the matrix form
\begin{equation}\label{2.27}
  \begin{split}
&_L\widetilde{A}_{p}^{\alpha}U=\frac{1}{h^{\alpha}}A_{p}^\alpha U,
\end{split}
\end{equation}
where
\begin{equation}\label{2.28}
A_p^{\alpha}=\left [ \begin{matrix}
q_p^\alpha      &q_{p-1}^\alpha   & \cdots         &q_0^\alpha     &             &                  &               &        \\
q_{p+1}^\alpha  &q_p^\alpha       &q_{p-1}^\alpha  &  \cdots       & q_0^\alpha  &                  &               &         \\
q_{p+2}^\alpha  &q_{p+1}^\alpha   &q_p^\alpha      &q_{p-1}^\alpha &\cdots       &    q_0^\alpha    &               &          \\
\vdots          &    \ddots       &\ddots          & \ddots        & \ddots      &   \cdots         &     \ddots    &           \\
q_{n-2}^\alpha  &\cdots           & \ddots         &q_{p+1}^\alpha &q_{p}^\alpha &  q_{p-1}^\alpha  & \cdots        &q_0^\alpha  \\
\vdots          &\ddots           & \cdots         & \ddots        & \ddots      &   \ddots         & \ddots        &\vdots       \\
q_{p+n-3}^\alpha&\cdots           &  \ddots        & \cdots        & \ddots      &  q_{p+1}^\alpha  &q_{p}^\alpha   &q_{p-1}^\alpha\\
q_{p+n-2}^\alpha&q_{p+n-3}^\alpha &  \cdots        &q_{n-2}^\alpha & \cdots      &    q_{p+2}^\alpha&q_{p+1}^\alpha &q_{p}^\alpha
 \end{matrix}
 \right ],
\end{equation}
and $p$ is an integer and $q_{k}^{\alpha}=0$, when $k<0$. From (\ref{2.23})-(\ref{2.25}) we obtain
\begin{equation}\label{2.29}
\begin{split}
&_{2L}\widetilde{A}_{p,q}^{\alpha}U=\frac{1}{h^{\alpha}}A_{p,q}^\alpha U, \quad
A_{p,q}^\alpha =w_p\,A_{p}^\alpha+w_q\,A_{q}^\alpha;
 \end{split}
\end{equation}
\begin{equation}\label{2.30}
\begin{split}
&_{3L}\widetilde{A}_{p,q,r,s}^{\alpha}U=\frac{1}{h^{\alpha}}A_{p,q,r,s}^\alpha U, \quad
A_{p,q,r,s}^\alpha =w_{p,q}\,A_{p,q}+w_{r,s}\,A_{r,s};
 \end{split}
\end{equation}
\begin{equation}\label{2.31}
\begin{split}
&_{4L}\widetilde{A}_{p,q,r,s,\overline{p},\overline{q},\overline{r},\overline{s}}^{\alpha}U
=\frac{1}{h^{\alpha}}A_{p,q,r,s,\overline{p},\overline{q},\overline{r},\overline{s}}^\alpha U, \quad
A_{p,q,r,s,\overline{p},\overline{q},\overline{r},\overline{s}}^\alpha =w_{p,q,r,s}\,A_{p,q,r,s}^\alpha
+w_{\overline{p},\overline{q},\overline{r},\overline{s}}\,A_{\overline{p},\overline{q},\overline{r},\overline{s}}^\alpha.
 \end{split}
\end{equation}
Similarly, for the right Riemann-Liouville derivative, taking
 \begin{equation*}
  \begin{split}
&_R\widetilde{A}_{p}^{\alpha}u(x)=\frac{1}{h^{\alpha}}\sum_{k=0}^{[\frac{x_R-x}{h}]+p}q_k^{\alpha}u(x+(k-p)h),
\end{split}
\end{equation*}
then there exists
\begin{equation*}
  \begin{split}
&_R\widetilde{A}_{p}^{\alpha}u(x_i)=\frac{1}{h^{\alpha}}\!\!\!\sum_{k=0}^{N_x-i+p}\!\!\!q_k^{\alpha}u(x_{i+k-p})
=\frac{1}{h^{\alpha}}\!\!\!\sum_{k=m-p}^{N_x-i+m}\!\!\!q_{k+p-m}^{\alpha}u(x_{i+k-m})
=\frac{1}{h^{\alpha}}\!\!\!\sum_{k=0}^{N_x-i+m}\!\!\!q_{k+p-m}^{\alpha}u(x_{i+k-m}),
\end{split}
\end{equation*}
where $q_{k+p-m}^{\alpha}=0$, when $k+p-m<0$, and $p$ is an integer. And the fourth order approximation is
\begin{equation}\label{2.32}
\begin{split}
&_{x}D_{x_R}^{\alpha}u(x_i)=\,_{4R}\widetilde{A}_{p,q,r,s,\overline{p},\overline{q},\overline{r},\overline{s}}^{\alpha}u(x_i)+\mathcal{O}(h^4)
=\frac{1}{h^{\alpha}}\sum_{k=0}^{N_x-i+m}\varphi_k^{\alpha}u(x_{i+k-m})+\mathcal{O}(h^4),
 \end{split}
\end{equation}
where  $\varphi_k^{\alpha}$ is defined by (\ref{2.26}), and the matrices forms are
\begin{equation}\label{2.33}
  \begin{split}
_R\widetilde{A}_{p}^{\alpha}U=\frac{1}{h^{\alpha}}B_{p}^\alpha U, ~~~~~~~ &B_{p}^\alpha=(A_{p}^\alpha)^T;\\
_{2R}\widetilde{A}_{p,q}^{\alpha}U=\frac{1}{h^{\alpha}}B_{p,q}^\alpha U, \quad
&B_{p,q}^\alpha =w_pB_{p}^\alpha+w_qB_{q}^\alpha;\\
_{3R}\widetilde{A}_{p,q,r,s}^{\alpha}U=\frac{1}{h^{\alpha}}B_{p,q,r,s}^\alpha U, \quad
&B_{p,q,r,s}^\alpha =w_{p,q}B_{p,q}+w_{r,s}B_{r,s};\\
_{4R}\widetilde{A}_{p,q,r,s,\overline{p},\overline{q},\overline{r},\overline{s}}^{\alpha}U
=\frac{1}{h^{\alpha}}B_{p,q,r,s,\overline{p},\overline{q},\overline{r},\overline{s}}^\alpha U, \quad
&B_{p,q,r,s,\overline{p},\overline{q},\overline{r},\overline{s}}^\alpha =w_{p,q,r,s}B_{p,q,r,s}^\alpha
+w_{\overline{p},\overline{q},\overline{r},\overline{s}}B_{\overline{p},\overline{q},\overline{r},\overline{s}}^\alpha.
\end{split}
\end{equation}
\begin{remark}\label{remark2.1}
When $p=0$,  then  $A_p^{\alpha}$ in (\ref{2.28}) reduces to the lower triangular matrix,
and it can be easily checked that all the eigenvalues of $A_p^{\alpha}$ are greater than one;
in fact, from  Lemma \ref{lemma2.1}, it can be noted that $\lambda  (A_p^{\alpha}) =\left(\frac{3}{2}\right)^{\alpha}$, with $\alpha \in(1,2)$.
This is the reason that the scheme for time dependent problem is unstable when directly using the second order Lubich formula with $\alpha \in(1,2)$ to discretize space fractional derivative.
\end{remark}

\subsection{Effective fourth order discretization for space fractional derivatives}
This subsection focuses on how to choose the parameters $p,q,r,s,\overline{p},\overline{q},\overline{r},\overline{s}$ such that all the eigenvalues of the matrix $A_{p,q}^\alpha$ (or $A_{p,q,r,s}^\alpha$ or $A_{p,q,r,s,\overline{p},\overline{q},\overline{r},\overline{s}}^\alpha$) have negative real parts; this means that the corresponding schemes work for space fractional derivatives. Since $B_{p,q}^\alpha$, $B_{p,q,r,s}^\alpha$, and $B_{p,q,r,s,\overline{p},\overline{q},\overline{r},\overline{s}}^\alpha$ is, respectively, the transpose of $A_{p,q}^\alpha$, $A_{p,q,r,s}^\alpha$, and $A_{p,q,r,s,\overline{p},\overline{q},\overline{r},\overline{s}}^\alpha$, we don't need to discuss them separately.

\begin{definition}\cite[p.\,27]{Quarteroni:07}\label{definition2.7}
A matrix $A \in \mathbb{R}^{n\times n}$ is positive definite in $\mathbb{R}^{n}$ if $(Ax,x)>0$, $\forall x \in \mathbb{R}^{n}$, $x\neq 0$.
\end{definition}

\begin{lemma}\cite[p.\,28]{Quarteroni:07}\label{lemma2.4}
A real matrix $A$ of order $n$ is positive definite  if and only if  its symmetric part $H=\frac{A+A^T}{2}$ is positive definite.
Let $H \in \mathbb{R}^{n\times n}$ be symmetric, then $H$ is positive definite if and only if the eigenvalues of $H$ are positive.
\end{lemma}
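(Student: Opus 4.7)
The plan is to handle the two equivalences separately, invoking only Definition \ref{definition2.7} and the spectral theorem for real symmetric matrices. For the first equivalence, I would exploit that $(Ax,x) = x^T A x$ is a scalar, hence equal to its own transpose $x^T A^T x$. Averaging the two expressions yields
\[
(Ax,x) \;=\; \tfrac{1}{2}\bigl(x^T A x + x^T A^T x\bigr) \;=\; x^T \tfrac{A+A^T}{2} x \;=\; (Hx,x)
\]
for every $x \in \mathbb{R}^n$. Thus the strict positivity condition from Definition \ref{definition2.7} holds for $A$ if and only if it holds for $H$, which is precisely the first assertion.

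For the second equivalence, I would invoke the spectral theorem: since $H$ is real symmetric, there exist an orthogonal $Q$ and a diagonal $\Lambda = \mathrm{diag}(\lambda_1,\ldots,\lambda_n)$ of real eigenvalues with $H = Q \Lambda Q^T$. Setting $y = Q^T x$ and using orthogonality of $Q$ (so that $x\neq 0$ iff $y\neq 0$), I would compute
\[
(Hx,x) \;=\; x^T Q \Lambda Q^T x \;=\; y^T \Lambda y \;=\; \sum_{i=1}^n \lambda_i y_i^2.
\]
If all $\lambda_i>0$, the right-hand side is strictly positive for $x\neq 0$, so $H$ is positive definite. Conversely, taking $x$ to be the $i$-th column $q_i$ of $Q$ (a unit eigenvector for $\lambda_i$) in Definition \ref{definition2.7} forces $\lambda_i = (Hq_i,q_i) > 0$.

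The statement is classical and the proof is essentially mechanical; there is no real obstacle here. The one piece of machinery worth citing rather than reproving is the spectral theorem, which is the only nontrivial input. The conceptual core is the observation that the quadratic form $x \mapsto (Ax,x)$ depends only on the symmetric part of $A$, and this is what makes the first equivalence immediate.
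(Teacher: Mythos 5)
Your proof is correct: the identity $(Ax,x)=(Hx,x)$ obtained by symmetrizing the quadratic form, together with the spectral theorem for the symmetric part, is the standard argument for both equivalences. The paper itself gives no proof of this lemma — it is quoted verbatim from Quarteroni, Sacco \& Saleri — so there is nothing to compare against; your argument fills in exactly what the cited reference contains.
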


\begin{lemma}\cite[p.\,184]{Quarteroni:07}\label{lemma2.5}
If $A \in \mathbb{C}^{n \times n}$, let $H=\frac{A+A^H}{2}$ be the hermitian part of $A$,  then for any eigenvalue $\lambda$ of  $A$,
the real part $\Re(\lambda(A))$ satisfies
\begin{equation*}
  \lambda_{min}(H) \leq \Re(\lambda(A)) \leq \lambda_{max}(H),
\end{equation*}
where $\lambda_{min}(H)$ and $\lambda_{max}(H)$ are the minimum and maximum of the eigenvalues of $H$, respectively.
\end{lemma}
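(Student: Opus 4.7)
The plan is to use the classical Rayleigh quotient argument for the Hermitian part. First, I would take any eigenvalue $\lambda$ of $A$ with associated eigenvector $x \in \mathbb{C}^n$ normalized so that $x^H x = 1$. From $Ax = \lambda x$ I would form $x^H A x = \lambda$, and then take its conjugate transpose to obtain $x^H A^H x = \overline{\lambda}$. Adding these two scalar identities and dividing by $2$ yields
\begin{equation*}
x^H H x = \frac{x^H A x + x^H A^H x}{2} = \frac{\lambda + \overline{\lambda}}{2} = \Re(\lambda).
\end{equation*}

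Next I would invoke the fact that $H = (A + A^H)/2$ is Hermitian, so by the spectral theorem it admits an orthonormal basis of eigenvectors with real eigenvalues $\mu_1 \leq \mu_2 \leq \cdots \leq \mu_n$, where $\mu_1 = \lambda_{min}(H)$ and $\mu_n = \lambda_{max}(H)$. Writing $x$ in this basis and using $x^H x = 1$, the Rayleigh quotient of $H$ at $x$ satisfies the standard bound
\begin{equation*}
\lambda_{min}(H) \;\leq\; x^H H x \;\leq\; \lambda_{max}(H).
\end{equation*}
Combining this with the identity $x^H H x = \Re(\lambda)$ from the previous step gives exactly the claimed inequality.

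There is no real obstacle here; the argument is essentially a one-line computation once one recognizes that $\Re(\lambda)$ equals the Rayleigh quotient of the Hermitian part at the corresponding unit eigenvector. The only care needed is to make sure the eigenvector is normalized so that the Rayleigh quotient bound is tight in the sense stated, and to be explicit that the bounds on the Rayleigh quotient of a Hermitian matrix are a direct consequence of its orthogonal diagonalization.
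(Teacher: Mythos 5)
Your argument is correct: the identity $x^H H x = \Re(\lambda)$ for a unit eigenvector $x$ of $A$, combined with the Rayleigh-quotient bounds for the Hermitian matrix $H$, gives exactly the stated inequality. Note that the paper does not prove this lemma at all --- it is quoted from Quarteroni et al.\ (2007, p.~184) as a known result --- so there is no in-paper proof to compare against; your proof is the standard one and fills that gap correctly.
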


\begin{definition}\cite[p.\,13]{Chan:07}\label{definition2.8}
Let  $n \times n$ Toeplitz  matrix  $T_n$ be of the following form:
\begin{equation*}
T_n=\left [ \begin{matrix}
                      t_0           &      t_{-1}             &      \cdots         &       t_{2-n}       &       t_{1-n}      \\
                      t_{1}         &      t_{0}              &      t_{-1}         &      \cdots         &       t_{2-n}        \\
                     \vdots         &      t_{1}              &      t_{0}          &      \ddots         &        \vdots            \\
                     t_{n-2}        &      \cdots             &      \ddots         &      \ddots         &        t_{-1}    \\
                     t_{n-1}        &       t_{n-2}           &      \cdots         &       t_1           &        t_0
 \end{matrix}
 \right ];
\end{equation*}
i.e., $t_{i,j}=t_{i-j}$ and $T_n$ is constant along its diagonals. Assume that the diagonals $\{t_k\}_{k=-n+1}^{n-1}$ are the Fourier coefficients of a function
$f$, i.e.,
\begin{equation*}
  t_k=\frac{1}{2\pi}\int_{-\pi}^{\pi}f(x)e^{-ikx}dx,
\end{equation*}
then the function $f$ is called the generating function of $T_n$.
\end{definition}

\begin{lemma}\cite[p.\,13-15]{Chan:07}\label{lemma2.6} (Grenander-Szeg\"{o} theorem) Let $T_n$ be given by above matrix with a generating function $f$,
where $f$ is a $2\pi$-periodic continuous real-valued functions defined on $[-\pi,\pi]$.
Let $\lambda_{min}(T_n)$ and $\lambda_{max}(T_n)$ denote the smallest and largest eigenvalues of $T_n$, respectively. Then we have
\begin{equation*}
  f_{min} \leq \lambda_{min}(T_n) \leq \lambda_{max}(T_n) \leq f_{max},
\end{equation*}
where $f_{min}$ and  $f_{max}$  is the minimum and maximum values of $f(x)$, respectively.
Moreover, if $f_{min}< f_{max}$, then all eigenvalues of $T_n$ satisfies
\begin{equation*}
  f_{min} < \lambda(T_n) < f_{max},
\end{equation*}
for all $n>0$; In particular, if  $f_{min}>0$, then $T_n$ is positive definite.
\end{lemma}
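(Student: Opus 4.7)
The plan is to reduce the eigenvalue bounds to a Rayleigh-quotient argument after representing the quadratic form $x^{*}T_{n}x$ as an integral of $f$ against the square of a trigonometric polynomial. First I would observe that since $f$ is real-valued, the Fourier coefficient formula gives $t_{-k} = \overline{t_{k}}$, so $T_{n}$ is Hermitian; hence its eigenvalues are real and satisfy the Courant-Fischer min-max characterization via $x^{*}T_{n}x / x^{*}x$.

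Next I would substitute the integral representation of $t_{i-j}$ into the bilinear form. For $x=(x_{0},\ldots,x_{n-1})^{T}\in\mathbb{C}^{n}$, interchanging sum and integral yields
\begin{equation*}
x^{*}T_{n}x = \frac{1}{2\pi}\int_{-\pi}^{\pi} f(\theta)\,\Bigl(\sum_{i=0}^{n-1}\overline{x_{i}}e^{-ii\theta}\Bigr)\Bigl(\sum_{j=0}^{n-1}x_{j}e^{ij\theta}\Bigr)\,d\theta = \frac{1}{2\pi}\int_{-\pi}^{\pi} f(\theta)\,|p(\theta)|^{2}\,d\theta,
\end{equation*}
where $p(\theta)=\sum_{j=0}^{n-1}x_{j}e^{ij\theta}$. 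Parseval's identity then gives $\frac{1}{2\pi}\int_{-\pi}^{\pi}|p(\theta)|^{2}d\theta=\|x\|_{2}^{2}$, so sandwiching $f$ between $f_{\min}$ and $f_{\max}$ produces $f_{\min}\|x\|_{2}^{2}\leq x^{*}T_{n}x\leq f_{\max}\|x\|_{2}^{2}$. By Rayleigh-Ritz this immediately yields the non-strict bounds $f_{\min}\leq\lambda_{\min}(T_{n})\leq\lambda_{\max}(T_{n})\leq f_{\max}$, and in the regime $f_{\min}>0$ the matrix $T_{n}$ is positive definite by Lemma \ref{lemma2.4}.

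The main obstacle is the strict inequality when $f_{\min}<f_{\max}$. Here I would argue by contradiction: if $\lambda_{\min}(T_{n})=f_{\min}$, take a unit eigenvector $x$; then $\int_{-\pi}^{\pi}(f(\theta)-f_{\min})|p(\theta)|^{2}d\theta=0$. Since both factors are nonnegative and continuous, the integrand vanishes identically on $[-\pi,\pi]$. But $p(\theta)$ is a nontrivial trigonometric polynomial of degree at most $n-1$, so $|p(\theta)|^{2}$ vanishes on at most finitely many points; therefore $f(\theta)=f_{\min}$ except on a finite set, and by continuity of $f$ everywhere, contradicting $f_{\min}<f_{\max}$. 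The analogous argument at $\lambda_{\max}(T_{n})=f_{\max}$ completes the strict bound for every $n>0$. Routine verification of the Parseval step and the Hermitian reduction should be brief; the delicate part is justifying that a trigonometric polynomial has only finitely many zeros, which follows from analyticity (or from the fact that $e^{-i(n-1)\theta}p(\theta)$ is a polynomial of degree $2(n-1)$ in $e^{i\theta}$ with at most $2(n-1)$ roots on the unit circle).
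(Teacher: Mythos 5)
The paper does not prove this lemma at all: it is quoted verbatim from Chan \& Jin (2007, pp.~13--15) as a known result, so there is no in-paper argument to compare against. Your proof is correct and is essentially the standard argument found in that reference: the representation $x^{*}T_{n}x=\frac{1}{2\pi}\int_{-\pi}^{\pi}f(\theta)|p(\theta)|^{2}\,d\theta$ together with Parseval gives the non-strict Rayleigh-quotient bounds, and the strict inequalities follow from your contradiction argument, whose only delicate point (that a nonzero trigonometric polynomial of degree at most $n-1$ has finitely many zeros, so the continuous nonnegative integrand forces $f\equiv f_{\min}$) you handle correctly. The appeal to Lemma \ref{lemma2.4} at the end is unnecessary --- positive definiteness is immediate from $x^{*}T_{n}x\geq f_{\min}\|x\|_{2}^{2}>0$ --- but harmless.
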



\begin{theorem}\label{theorem2.9} (Effective second order schemes)
Let $A_{p,q}^\alpha$ be given in (\ref{2.29}) and   $1<\alpha<2$.
Then any eigenvalue $\lambda$ of $A_{p,q}^\alpha$ satisfies
 $$\Re(\lambda(A_{p,q}^\alpha))<0 \quad \mbox{for} \quad (p,q)=(1,q),\quad  |q|\geq 2,$$
moreover, the matrices $A_{p,q}^\alpha$ and $(A_{p,q}^\alpha)^T$ are negative definite.
\end{theorem}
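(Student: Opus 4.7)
The plan is to combine Grenander--Szeg\"o (Lemma~\ref{lemma2.6}) with the characterizations of positive definiteness and real parts of eigenvalues in Lemmas~\ref{lemma2.4} and~\ref{lemma2.5}. Because every $A_p^\alpha$ in (\ref{2.28}) is Toeplitz, so is $A_{p,q}^\alpha = w_p A_p^\alpha + w_q A_q^\alpha$, and so too is its symmetric part $H_{p,q}^\alpha := \tfrac{1}{2}(A_{p,q}^\alpha + (A_{p,q}^\alpha)^T)$. The theorem reduces to showing $\lambda_{\max}(H_{p,q}^\alpha) < 0$: once that is known, Lemma~\ref{lemma2.5} gives $\Re(\lambda(A_{p,q}^\alpha)) \le \lambda_{\max}(H_{p,q}^\alpha) < 0$, Lemma~\ref{lemma2.4} converts the bound on $H_{p,q}^\alpha$ into negative definiteness of $A_{p,q}^\alpha$, and the same argument applies to $(A_{p,q}^\alpha)^T$ since it has the same symmetric part.

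The symbol of $A_p^\alpha$ is obtained by multiplying the series in (\ref{2.1}) by $e^{-ip\omega}$: the $(i,j)$ entry is $q^\alpha_{p+i-j}$ (with $q^\alpha_k = 0$ for $k<0$), so
\[
f_p(\omega) = e^{-ip\omega}\Bigl(\tfrac{3}{2} - 2e^{i\omega} + \tfrac{1}{2}e^{2i\omega}\Bigr)^\alpha = \Bigl(\tfrac{3}{2}\Bigr)^\alpha e^{-ip\omega}(1-e^{i\omega})^\alpha\bigl(1 - \tfrac{1}{3}e^{i\omega}\bigr)^\alpha.
\]
Consequently the generating function of $H_{p,q}^\alpha$ is $g_{p,q}(\omega) := \Re[w_p f_p(\omega) + w_q f_q(\omega)]$. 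Substituting $1-e^{i\omega} = -2i\sin(\omega/2)e^{i\omega/2}$ and writing $1 - \tfrac{1}{3}e^{i\omega} = \rho(\omega)e^{i\theta(\omega)}$ with $\rho(\omega)>0$ and $\theta(\omega) = -\arctan\!\bigl(\sin\omega/(3-\cos\omega)\bigr)$, one finds for $\omega\in(0,\pi]$ the polar form $f_p(\omega) = |w(e^{i\omega})|^\alpha \exp\{i[\psi(\omega) - p\omega]\}$, where $\psi(\omega) := \alpha\bigl(\omega/2 - \pi/2 + \theta(\omega)\bigr)$ and $|w(e^{i\omega})| = \tfrac{3}{2}\cdot 2\sin(\omega/2)\cdot\rho(\omega)$.

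Inserting the weights $w_1 = q/(q-1)$, $w_q = -1/(q-1)$ yields the key identity
\[
g_{1,q}(\omega) = \frac{|w(e^{i\omega})|^\alpha}{q-1}\Bigl[q\cos\bigl(\psi(\omega)-\omega\bigr) - \cos\bigl(\psi(\omega)-q\omega\bigr)\Bigr], \qquad \omega \in (0,\pi],
\]
extended to $[-\pi,0)$ by the symmetry $g_{1,q}(-\omega) = g_{1,q}(\omega)$ and with $g_{1,q}(0)=0$ coming from $w(1)=0$. The core analytical step is then to prove that the bracketed quantity has the appropriate sign so that $g_{1,q}(\omega) \le 0$ throughout $[-\pi,\pi]$ whenever $|q|\ge 2$ and $\alpha\in(1,2)$. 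At the endpoint $\omega=\pi$ one has $\theta(\pi)=0$ and $\psi(\pi)=0$, so the bracket becomes $-q-(-1)^q$; combined with the sign of $q-1$ this gives $g_{1,q}(\pi) < 0$ for every $|q|\ge 2$, confirming that $g_{1,q}$ is not identically zero. Given $g_{1,q}\le 0$ with $f_{\max}=0$ and $f_{\min}<0$, the strict form of Lemma~\ref{lemma2.6} then yields $\lambda_{\max}(H_{p,q}^\alpha) < f_{\max} = 0$, closing the proof.

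The main obstacle is the global sign analysis of the bracket. Unlike in the Gr\"unwald case (where the symbol factors cleanly into elementary trigonometric functions), the branch factor $\theta(\omega)$ coming from $(1-\zeta/3)^\alpha$ is transcendental in $\omega$, and the restriction $\alpha\in(1,2)$ is genuinely needed. A workable route is to split the cases $q\ge 2$ and $q\le -2$ separately, apply sum-to-product identities to rewrite the bracket as a product featuring $\sin\bigl((q-1)\omega/2\bigr)$, and then use monotonicity in $|q|$ together with an explicit verification of the extremal cases $q=\pm 2$, where $\psi(\omega)-q\omega$ admits a closed form convenient for estimation.
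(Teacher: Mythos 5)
Your proposal takes essentially the same route as the paper: form the symmetric part $H_{p,q}$, identify its generating function, factor the symbol of $\left(\tfrac32-2\zeta+\tfrac12\zeta^2\right)^{\alpha}$ into polar form, and apply the Grenander--Szeg\"{o} theorem together with Lemmas \ref{lemma2.4} and \ref{lemma2.5}; your $g_{1,q}(\omega)$ is exactly the paper's $f_{p,q}(\alpha,x)$ in (\ref{2.35}), with your $\theta(\omega)=-\arctan\bigl(\sin\omega/(3-\cos\omega)\bigr)$ an equivalent rewriting of the paper's branch angle. The one step you honestly flag as the ``main obstacle'' --- proving the bracketed quantity has the right sign for all $\omega$ and all $\alpha\in(1,2)$ --- is precisely the step the paper also does not prove analytically: it asserts the inequality and refers to Figures \ref{FIG.1}--\ref{FIG.2} for verification, so your proposal is at the same level of completeness (and your explicit endpoint computation $g_{1,q}(\pi)<0$, which guarantees the symbol is not identically zero, is a detail the paper leaves implicit).
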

\begin{proof}
\begin{description}
 \item[(1)] For $(p,q)=(1,q)$,  $q\leq -2$, we have   $A_{p,q}^\alpha=\frac{1}{q-1}(qA_1^\alpha-A_q^\alpha)$, and
\end{description}
\begin{equation*}
A_{p,q}^\alpha=\left [ \begin{matrix}
\phi_1^{\alpha}             &\phi_0^{\alpha}         &                          &                          &                             \\
\phi_2^{\alpha}             &\phi_1^{\alpha}         &\phi_0^{\alpha}           &                          &                            \\
\vdots                      &\ddots                  &       \ddots             &        \ddots            &     \\
\phi_{N_x-2}^{\alpha}       &\ddots                  &        \ddots            &   \phi_1^{\alpha}        & \phi_0^{\alpha} \\
\phi_{N_x-1}^{\alpha}       &\phi_{N_x-2}^{\alpha}   &         \cdots           &\phi_2^{\alpha}           & \phi_1^{\alpha}
 \end{matrix}
 \right ],
\end{equation*}
with
 \begin{equation*}
\phi_k^{\alpha}=\left\{ \begin{array}
 {l@{\quad} l}
\frac{qq_k^{\alpha}}{q-1},&0\leq k\leq -q,\\
\frac{qq_k^{\alpha}-q_{k+q-1}^{\alpha}}{q-1},&k>-q.
 \end{array}
 \right.
\end{equation*}
The generating functions of $A_{p,q}^\alpha$ and $(A_{p,q}^{\alpha})^T$ are
\begin{equation*}
  f_{A_{p,q}^\alpha}(x)=\sum_{k=0}^{\infty}\phi_k^{\alpha}e^{i(k-1)x}~~\mbox{and}~~ f_{(A_{p,q}^\alpha)^T}(x)=\sum_{k=0}^{\infty}\phi_k^{\alpha}e^{-i(k-1)x},
\end{equation*}
respectively.  Taking $H_{p,q}=\frac{A_{p,q}^\alpha+\left(A_{p,q}^\alpha\right)^T}{2}$,
 then $f_{p,q}(\alpha,x)=\frac{f_{A_{p,q}^\alpha}(x)+f_{(A_{p,q}^\alpha)^T}(x)}{2}$ is the generating function of $H_{p,q}$.
Since $f_{A_{p,q}^\alpha}(x)$ and $f_{(A_{p,q}^\alpha)^T}(x)$ are mutually conjugated, then  $f_{p,q}(\alpha,x)$ is a $2\pi$-periodic continuous real-valued functions defined on $[-\pi,\pi]$. Moreover, $f_{p,q}(\alpha,x)$ is an even function, so we just need to consider its principal value on $[0,\pi]$.
Next, we prove $f_{p,q}(\alpha,x)\leq 0$. Rephrasing the generating function leads to
\begin{equation*}
\begin{split}
f_{p,q}(\alpha,x)&=\frac{1}{2}\left(\sum_{k=0}^{\infty}\phi_k^{\alpha}e^{i(k-1)x}+\sum_{k=0}^{\infty}\phi_k^{\alpha}e^{-i(k-1)x}\right)\\
   &=\frac{1}{2(q-1)}\left(qe^{-ix}\sum_{k=0}^{\infty}q_k^{\alpha}e^{ikx}-e^{-iqx}\sum_{k=0}^{\infty}q_k^{\alpha}e^{ikx}
   +qe^{ix}\sum_{k=0}^{\infty}q_k^{\alpha}e^{-ikx}-e^{iqx}\sum_{k=0}^{\infty}q_k^{\alpha}e^{-ikx}\right)\\
   &=\frac{1}{2(q-1)} \left[qe^{-ix}(1-e^{ix})^{\alpha} \left(1 +\frac{1}{2}(1-e^{ix})\right)^{\alpha}
     + qe^{ix}(1-e^{-ix})^{\alpha} \left(1 + \frac{1}{2}(1-e^{-ix})\right)^{\alpha} \right]   \\
        &\quad -\frac{1}{2(q-1)} \left[e^{-iqx}(1-e^{ix})^{\alpha} \left(1 + \frac{1}{2}(1-e^{ix})\right)^{\alpha}
       + e^{iqx}(1-e^{-ix})^{\alpha} \left(1 + \frac{1}{2}(1-e^{-ix})\right)^{\alpha} \right].
\end{split}
\end{equation*}
Because of
\begin{equation*}
  (1-e^{\pm ix})^{\alpha}=\left(2sin\frac{x}{2}\right)^{\alpha}e^{\pm i\alpha(\frac{x}{2}-\frac{\pi}{2})},
  ~~\left(1 + \frac{1}{2}(1-e^{\pm ix})\right)^{\alpha}=\left( 1+3sin^2\,\frac{x}{2}\right)^{\frac{\alpha}{2}}e^{\pm i\alpha(\frac{x}{2}-\theta)},
\end{equation*}
where
\begin{equation*}
  \theta=2 arctan \frac{2sin\frac{x}{2}}{cos\frac{x}{2}+ \sqrt{1+3sin^2\,\frac{x}{2}}} \in [0,\pi/2],
\end{equation*}
then, for $q\leq -2$, there exists
\begin{equation*}
\begin{split}
f_{p,q}(\alpha,x)&=\frac{1}{q-1}\left(2sin\frac{x}{2}\right)^{\alpha}\left( 1+3sin^2\,\frac{x}{2}\right)^{\frac{\alpha}{2}}  \left[q\,cos\left (\alpha(x-\frac{\pi}{2}-\theta)-x\right )-cos\left (\alpha(x-\frac{\pi}{2}-\theta)-qx\right)\right].
\end{split}
\end{equation*}

\begin{description}
 \item[(2)] For $(p,q)=(1,q)$,  $q\geq 2$, we have   $A_{p,q}^\alpha=\frac{1}{q-1}(qA_1^\alpha-A_q^\alpha)$ and
\end{description}

\begin{equation*}
A_{p,q}^\alpha=\left [ \begin{matrix}
\phi_q^\alpha      &\phi_{q-1}^\alpha   & \cdots          &\phi_0^\alpha     &               &                  &                 &        \\
\phi_{q+1}^\alpha  &\phi_q^\alpha       &\phi_{q-1}^\alpha&  \cdots          & \phi_0^\alpha &                  &                 &         \\
\phi_{q+2}^\alpha  &\phi_{q+1}^\alpha   &\phi_q^\alpha    &\phi_{q-1}^\alpha &\cdots         &\phi_0^\alpha     &                 &        \\
\vdots             &    \ddots          &\ddots           & \ddots           & \ddots        &   \cdots         & \ddots          &             \\
\phi_{n-2}^\alpha  &\cdots              & \ddots          &\phi_{q+1}^\alpha &\phi_{q}^\alpha&\phi_{q-1}^\alpha & \cdots          &\phi_0^\alpha \\
\vdots             &\ddots              & \cdots          & \ddots           & \ddots        &   \ddots         & \ddots          &\vdots  \\
\phi_{q+n-3}^\alpha&\cdots              &  \ddots         & \cdots           & \ddots        &\phi_{q+1}^\alpha &\phi_{q}^\alpha  &\phi_{q-1}^\alpha \\
\phi_{q+n-2}^\alpha&\phi_{q+n-3}^\alpha &  \cdots         &\phi_{n-2}^\alpha & \cdots        &\phi_{q+2}^\alpha &\phi_{q+1}^\alpha&\phi_{q}^\alpha
 \end{matrix}
 \right ],
\end{equation*}
with
 \begin{equation*}
\phi_k^{\alpha}=\left\{ \begin{array}
 {l@{\quad} l}
-\frac{q_k^{\alpha}}{q-1},&0\leq k\leq q-2,\\
\frac{qq_{k-q+1}^{\alpha}-q_{k}^{\alpha}}{q-1},&k>q-2.
 \end{array}
 \right.
\end{equation*}
The generating functions of $A_{p,q}^\alpha$ and $(A_{p,q}^{\alpha})^T$ are
\begin{equation*}
  f_{A_{p,q}^\alpha}(x)=\sum_{k=0}^{\infty}\phi_k^{\alpha}e^{i(k-q)x}~~\mbox{and}~~ f_{(A_{p,q}^\alpha)^T}(x)=\sum_{k=0}^{\infty}\phi_k^{\alpha}e^{-i(k-q)x},
\end{equation*}
respectively.  Denoting
\begin{equation}\label{2.34}
H_{p,q}=\frac{A_{p,q}^\alpha+\left(A_{p,q}^\alpha\right)^T}{2},
\end{equation}
 then $f_{p,q}(\alpha,x)=\frac{f_{A_{p,q}^\alpha}(x)+f_{(A_{p,q}^\alpha)^T}(x)}{2}$ is the generating function of $H_{p,q}$.
By the similar way, for $q \geq 2$, there exists
\begin{equation*}
\begin{split}
f_{p,q}(\alpha,x)&=\frac{1}{q-1}\left(2sin\frac{x}{2}\right)^{\alpha}\left( 1+3sin^2\,\frac{x}{2}\right)^{\frac{\alpha}{2}}  \left[q\,cos\left (\alpha(x-\frac{\pi}{2}-\theta)-x\right )-cos\left (\alpha(x-\frac{\pi}{2}-\theta)-qx\right)\right].
\end{split}
\end{equation*}
It can be noted that $f_{p,q}(\alpha,x)$ has the same form when  $q \leq -2$ and $q \geq 2$, $p=1$.  And we can check that, for $(p,q)=(1,q)$,  $|q| \geq 2$, there exists  (see Figs. \ref{FIG.1}-\ref{FIG.2})
\begin{equation}\label{2.35}
\begin{split}
f_{p,q}(\alpha,x)&=\frac{1}{q-1}\left(2sin\frac{x}{2}\right)^{\alpha}\left( 1+3sin^2\,\frac{x}{2}\right)^{\frac{\alpha}{2}} \\
 &\quad \cdot \left[q\,cos\left (\alpha(x-\frac{\pi}{2}-\theta)-x\right )-cos\left (\alpha(x-\frac{\pi}{2}-\theta)-qx\right)\right] \leq 0.
\end{split}
\end{equation}
 Since $f_{p,q}(\alpha,x)$ is not identically zero for any given $\alpha \in (1,2)$, from Lemma \ref{lemma2.6},
 it implies that $\lambda (H_{p,q})<0$ and $H_{p,q}$ is negative definite.
Then we get $\Re(\lambda(A_{p,q}^\alpha))<0$ from Lemma  \ref{lemma2.5}, and the matrices $A_{p,q}^\alpha$ and $(A_{p,q}^\alpha)^T$ are negative definite by Lemma \ref{lemma2.4}.
\end{proof}


\begin{figure}[t]
    \begin{minipage}[t]{0.45\linewidth}
    \includegraphics[scale=0.47]{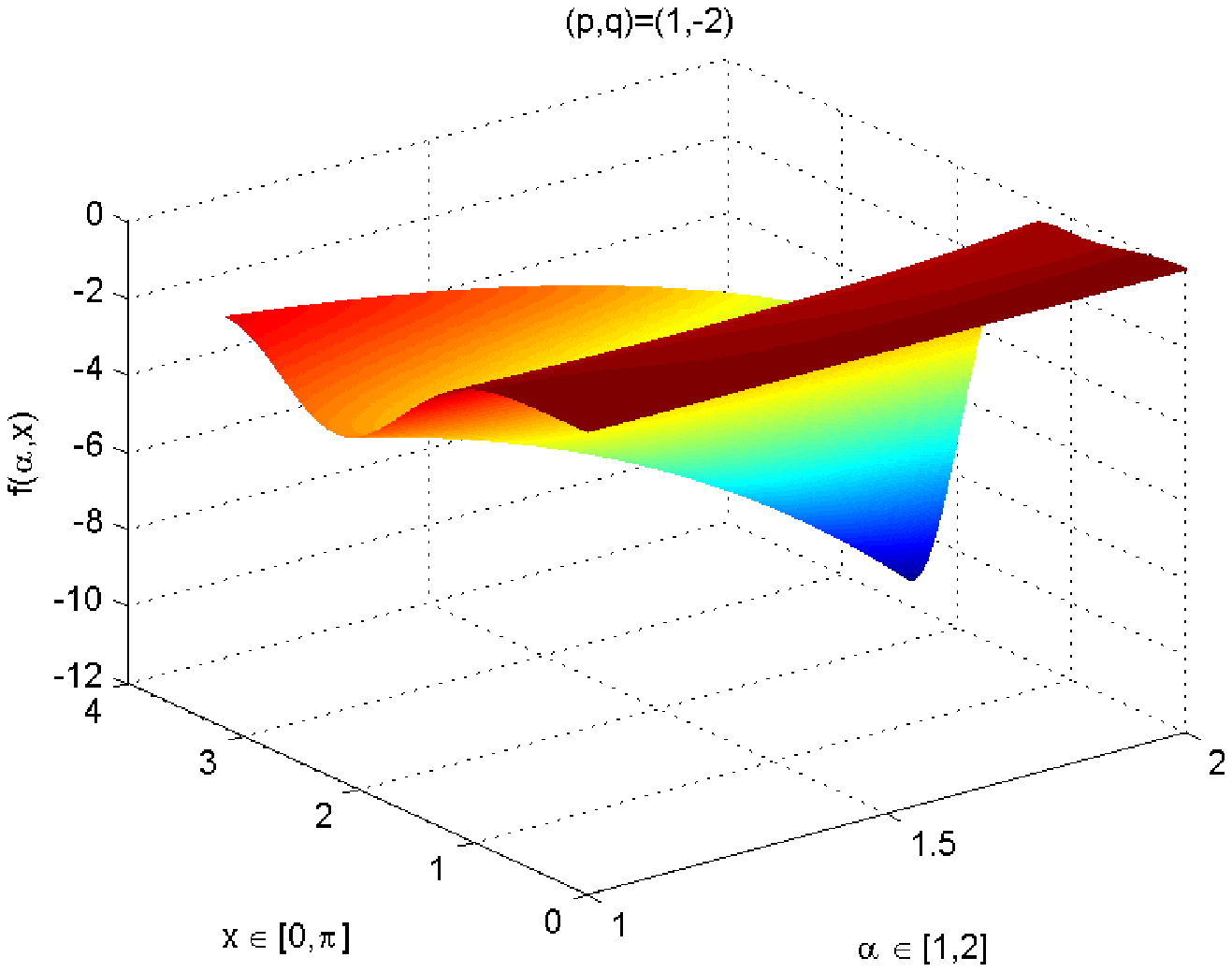}
    \caption{$ f(\alpha,x)$ for $(p,q)=(1,-2)$} \label{FIG.1}
    \end{minipage}
  \begin{minipage}[t]{0.45\linewidth}
    \includegraphics[scale=0.47]{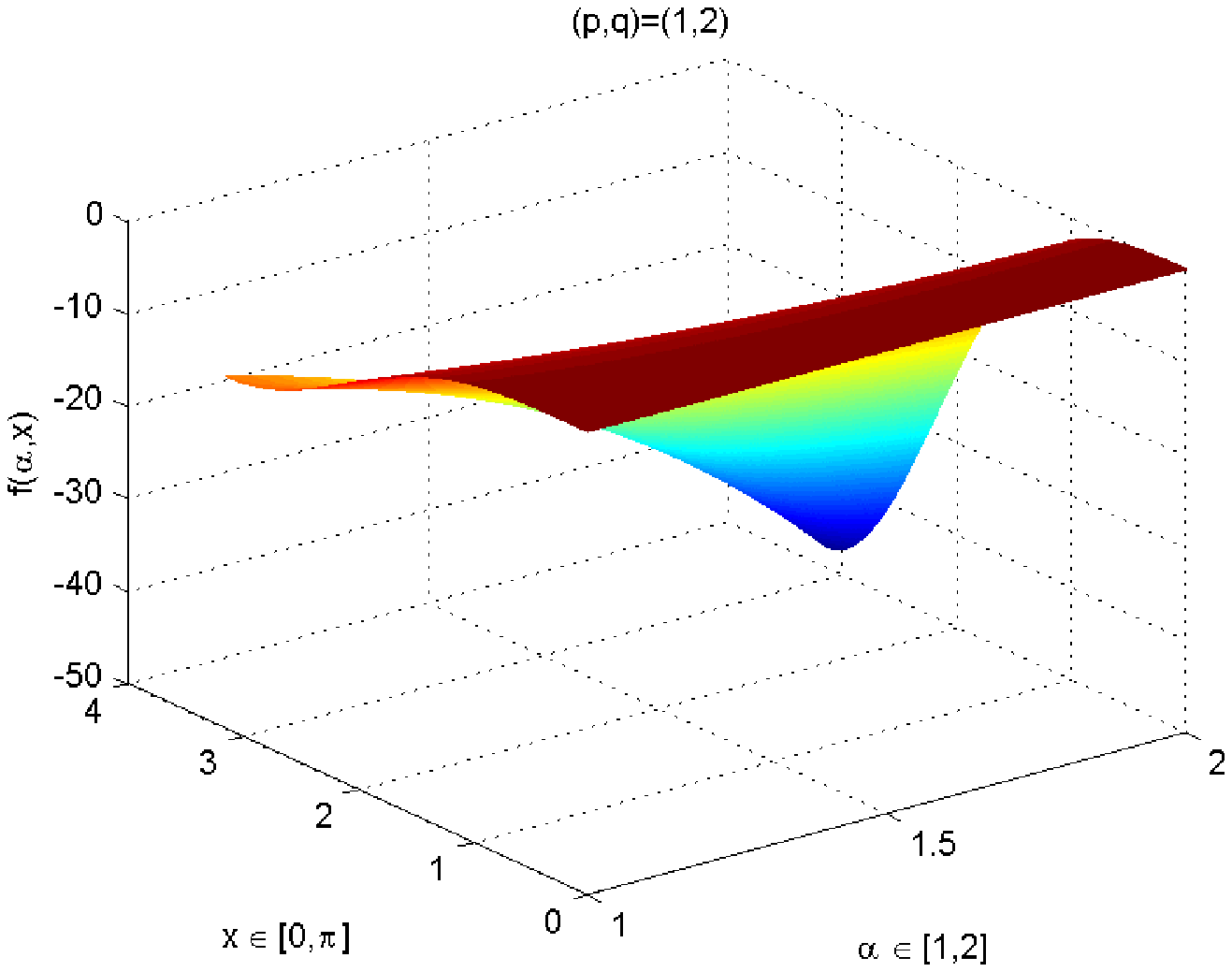}
    \caption{$ f(\alpha,x)\leq 0$ for $(p,q)=(1,2)$} \label{FIG.2}
    \end{minipage}
 \end{figure}

\begin{theorem}\label{theorem2.10} (Effective third order schemes)
Let $A_{p,q,r,s}^\alpha$ with $1<\alpha<2$ be given in (\ref{2.30}).
Then any eigenvalue $\lambda$ of $A_{p,q,r,s}^\alpha$ satisfies
 $$\Re(\lambda(A_{p,q,r,s}^\alpha))<0 \quad \mbox{for} \quad (p,q,r,s)=(1,q,1,s),\quad  |q|\geq 2, \quad  |s|\geq 2,  \quad \mbox{and} \quad  qs<0;$$
moreover, the matrices $A_{p,q,r,s}^\alpha$ and $(A_{p,q,r,s}^\alpha)^T$ are negative definite.
\end{theorem}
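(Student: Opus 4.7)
The plan is to mirror the strategy used in Theorem~\ref{theorem2.9}: pass to the symmetric part of $A_{p,q,r,s}^\alpha$, identify its generating function explicitly, prove that this generating function is non-positive (and not identically zero) on $[-\pi,\pi]$, then apply the Grenander-Szeg\"{o} theorem (Lemma~\ref{lemma2.6}) together with Lemmas~\ref{lemma2.4} and~\ref{lemma2.5}. The decisive reduction is that (\ref{2.30}) makes $A_{p,q,r,s}^\alpha$ a linear combination of the matrices $A_{1,q}^\alpha$ and $A_{1,s}^\alpha$ already analysed in Theorem~\ref{theorem2.9}, so generating functions behave additively.

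First I would set $(p,q,r,s)=(1,q,1,s)$ so that $A_{p,q,r,s}^\alpha = w_{p,q}\,A_{1,q}^\alpha + w_{r,s}\,A_{1,s}^\alpha$ with the specific weights
\begin{equation*}
w_{p,q}=\frac{3s+2\alpha}{3(s-q)}, \qquad w_{r,s}=\frac{3q+2\alpha}{3(q-s)}.
\end{equation*}
By linearity the symmetric part $H_{p,q,r,s}=(A_{p,q,r,s}^\alpha+(A_{p,q,r,s}^\alpha)^T)/2$ is $w_{p,q}H_{1,q}+w_{r,s}H_{1,s}$, so its generating function is
\begin{equation*}
f_{p,q,r,s}(\alpha,x)=w_{p,q}\,f_{1,q}(\alpha,x)+w_{r,s}\,f_{1,s}(\alpha,x),
\end{equation*}
where $f_{1,q}(\alpha,x)$ and $f_{1,s}(\alpha,x)$ are given by the closed form (\ref{2.35}) established in the proof of Theorem~\ref{theorem2.9}, both of which are known to be $\leq 0$ and not identically zero on $[-\pi,\pi]$.

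The main step — and the only one requiring genuine sign analysis — is to show that \emph{both} weights $w_{p,q}$ and $w_{r,s}$ are strictly positive under the hypotheses $|q|\geq 2$, $|s|\geq 2$, $qs<0$. By symmetry assume $q\geq 2$ and $s\leq -2$. Then $s-q<0$ and $3s+2\alpha\leq -6+2\alpha<0$ (since $\alpha<2$), so $w_{p,q}>0$; similarly $q-s>0$ and $3q+2\alpha>0$, so $w_{r,s}>0$. The opposite case $q\leq -2$, $s\geq 2$ is identical after swapping labels. Thus $f_{p,q,r,s}(\alpha,x)$ is a positive linear combination of two non-positive, not identically zero, $2\pi$-periodic continuous real-valued even functions, and hence itself has this property.

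Finally, Lemma~\ref{lemma2.6} applied to $H_{p,q,r,s}$ yields $\lambda(H_{p,q,r,s})<0$, so $H_{p,q,r,s}$ is negative definite; Lemma~\ref{lemma2.5} then gives $\Re(\lambda(A_{p,q,r,s}^\alpha))<0$, and Lemma~\ref{lemma2.4} gives negative definiteness of both $A_{p,q,r,s}^\alpha$ and $(A_{p,q,r,s}^\alpha)^T$. I expect the only real obstacle to be the sign verification of the weights, since once the two compatibility inequalities on $w_{p,q}$ and $w_{r,s}$ are secured, the rest of the argument is a direct re-use of the scalar inequalities already derived in the proof of Theorem~\ref{theorem2.9}.
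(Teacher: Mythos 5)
Your proposal is correct and follows essentially the same route as the paper: decompose the symmetric part as $w_{p,q}H_{1,q}+w_{r,s}H_{1,s}$, verify that both weights are strictly positive under $|q|\geq 2$, $|s|\geq 2$, $qs<0$, and then reuse the non-positivity of the generating function from Theorem~\ref{theorem2.9} together with Lemmas~\ref{lemma2.4}--\ref{lemma2.6}. Your explicit sign check of the weights is in fact more detailed than the paper's, which merely asserts "we can check that" the weights are positive.
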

\begin{proof}  Taking
\begin{equation}\label{2.36}
 H_{p,q,r,s}=\frac{A_{p,q,r,s}^\alpha+\left(A_{p,q,r,s}^\alpha\right)^T}{2}=w_{p,q}\,H_{p,q}+w_{r,s}\,H_{r,s},
 \end{equation}
 where $H_{p,q}$ and  $H_{r,s}$ are defined by (\ref{2.34}), then
 \begin{equation}\label{2.37}
   f_{p,q,r,s}(\alpha,x)=w_{p,q}\,f_{p,q}(\alpha,x)+w_{r,s}\,f_{r,s}(\alpha,x)
 \end{equation}
 is the generating function of $H_{p,q,r,s} $,
 where $f_{p,q}(\alpha,x)$ and  $f_{r,s}(\alpha,x)$ are given by (\ref{2.35}).
Since $|q|\geq 2$,  $|s|\geq 2$, and  $qs<0$, we can check that
$w_{p,q}=w_{1,q}=\frac{3s+2\alpha}{3(s-q)}>0$,   $w_{r,s}=w_{1,s}=\frac{3q+2\alpha}{3(q-s)}>0$.
 Then from  (\ref{2.35}) and  (\ref{2.37}), we get  $f_{p,q,r,s}(\alpha,x)\leq 0$.

Again, from Lemmas \ref{lemma2.4}-\ref{lemma2.6}, the desired results are obtained.
%
\end{proof}

\begin{theorem}\label{theorem2.11} (Effective fourth order schemes)
Let $A_{p,q,r,s,\overline{p},\overline{q},\overline{r},\overline{s}}^\alpha
$ with $1<\alpha<2$ be given in (\ref{2.31}), where $ (p,q,r,s,\overline{p},\overline{q},\overline{r},\overline{s})
=(1,2,1,-2,1,\overline{q},1,\overline{s})$,
  $|\overline{q}|\geq 2$, $|\overline{s}|\geq 2$, $(\overline{q},\overline{s})\neq (2,-2)$
and $\overline{q}\,\overline{s}<0$.
Then any eigenvalue $\lambda$ of $A_{p,q,r,s,\overline{p},\overline{q},\overline{r},\overline{s}}^\alpha$ satisfies
\begin{equation*}
\Re(\lambda(A_{p,q,r,s,\overline{p},\overline{q},\overline{r},\overline{s}}^\alpha))<0,
 \end{equation*}
and  the matrices $A_{p,q,r,s,\overline{p},\overline{q},\overline{r},\overline{s}}^\alpha$
 and $(A_{p,q,r,s,\overline{p},\overline{q},\overline{r},\overline{s}}^\alpha)^T$ are negative definite.

Moreover, if  $(p,q,r,s,\overline{p},\overline{q},\overline{r},\overline{s})$  takes the following values
 \begin{equation*}
   \begin{split}
& (p,q,r,s,\overline{p},\overline{q},\overline{r},\overline{s})=(1,2,1,0,1,2,1,-2),\\
& (p,q,r,s,\overline{p},\overline{q},\overline{r},\overline{s})=(1,2,1,0,1,-1,1,-2),\\
& (p,q,r,s,\overline{p},\overline{q},\overline{r},\overline{s})=(1,2,1,-1,1,2,1,-2),\\
& (p,q,r,s,\overline{p},\overline{q},\overline{r},\overline{s})=(1,2,1,-1,1,-1,1,-2),\\
& (p,q,r,s,\overline{p},\overline{q},\overline{r},\overline{s})=(1,0,1,-1,1,2,1,-2),\\
& (p,q,r,s,\overline{p},\overline{q},\overline{r},\overline{s})=(1,0,1,-2,1,2,1,-2),\\
& (p,q,r,s,\overline{p},\overline{q},\overline{r},\overline{s})=(1,-1,1,-2,1,2,1,-2),\\
   \end{split}
 \end{equation*}
 then $\Re(\lambda(A_{p,q,r,s,\overline{p},\overline{q},\overline{r},\overline{s}}^\alpha))<0$ and the matrices $A_{p,q,r,s,\overline{p},\overline{q},\overline{r},\overline{s}}^\alpha$
and $(A_{p,q,r,s,\overline{p},\overline{q},\overline{r},\overline{s}}^\alpha)^T$ are negative definite.
\end{theorem}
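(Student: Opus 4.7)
My plan is to follow the generating-function route that worked in Theorems \ref{theorem2.9} and \ref{theorem2.10}. First I form the hermitian part
\[
H_{4L} = \frac{A_{p,q,r,s,\overline{p},\overline{q},\overline{r},\overline{s}}^\alpha + (A_{p,q,r,s,\overline{p},\overline{q},\overline{r},\overline{s}}^\alpha)^T}{2} = w_{p,q,r,s}\, H_{p,q,r,s} + w_{\overline{p},\overline{q},\overline{r},\overline{s}}\, H_{\overline{p},\overline{q},\overline{r},\overline{s}},
\]
so its generating function is the corresponding linear combination
\[
f_{4L}(\alpha,x) = w_{p,q,r,s}\, f_{p,q,r,s}(\alpha,x) + w_{\overline{p},\overline{q},\overline{r},\overline{s}}\, f_{\overline{p},\overline{q},\overline{r},\overline{s}}(\alpha,x),
\]
where each summand on the right is the non-positive third-order generating function already produced in Theorem \ref{theorem2.10}. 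Once I establish $f_{4L}(\alpha,x) \leq 0$ (and not identically zero) for $\alpha \in (1,2)$, Lemma \ref{lemma2.6} yields $\lambda(H_{4L}) < 0$, Lemma \ref{lemma2.5} gives $\Re(\lambda(A_{4L}^\alpha)) < 0$, and Lemma \ref{lemma2.4} promotes this to negative definiteness of both $A_{4L}^\alpha$ and its transpose, closing the argument in exactly the same manner as the lower-order theorems.

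The essential difficulty, and the reason this is not a free corollary of Theorem \ref{theorem2.10}, is the sign of the weights \eqref{2.9}--\eqref{2.10}. They are chosen precisely to annihilate the leading $z^3$ error term, so they satisfy $w_{p,q,r,s} + w_{\overline{p},\overline{q},\overline{r},\overline{s}} = 1$ but will typically have opposite signs; consequently $f_{4L}$ is generally not a conic combination of non-positive functions and the desired inequality must be obtained by a direct calculation. For the parametrized family $(1,2,1,-2,1,\overline{q},1,\overline{s})$ with $|\overline{q}|\geq 2$, $|\overline{s}|\geq 2$, $(\overline{q},\overline{s})\neq(2,-2)$, and $\overline{q}\,\overline{s}<0$, I would substitute $pq=2$, $rs=-2$, $p+q=3$, $r+s=-1$ into the definitions to obtain $a_{p,q,r,s}=-4$ and $b_{p,q,r,s}=96-52\alpha$, and then reduce $\overline{a}_{\overline{p},\overline{q},\overline{r},\overline{s}}$, $\overline{b}_{\overline{p},\overline{q},\overline{r},\overline{s}}$ to explicit rational functions of $\alpha$ once $\overline{q},\overline{s}$ are fixed. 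This turns both weights into closed-form functions of $\alpha$ alone.

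With the weights in hand, the closed form \eqref{2.35} factors each summand as $\left(2\sin\frac{x}{2}\right)^\alpha \left(1+3\sin^2\frac{x}{2}\right)^{\alpha/2}$ times a bounded combination of cosines, so the sign analysis reduces to a one-variable trigonometric inequality on $x\in[0,\pi]$ depending only on $\alpha$ and the chosen shifts. I would try to prove it by rewriting
\[
f_{4L} = f_{p,q,r,s} + w_{\overline{p},\overline{q},\overline{r},\overline{s}}\bigl(f_{\overline{p},\overline{q},\overline{r},\overline{s}} - f_{p,q,r,s}\bigr),
\]
exploiting that the first summand is already non-positive and controlling the correction via Taylor expansion near $x=0$ (where the built-in fourth-order consistency kills the first three powers of $z$) and direct estimation near $x=\pi$; the intermediate range over $\alpha\in(1,2)$ would then be confirmed graphically in the spirit of Figs. \ref{FIG.1}--\ref{FIG.2}. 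The seven exceptional tuples listed at the end fall outside the parametrized template (several involve shifts $0$ or $-1$), so each requires its own verification by the same recipe: compute the two weights from \eqref{2.9}--\eqref{2.10}, substitute into $f_{4L}$, and check the resulting one-parameter inequality analytically at the endpoints and graphically in between. I expect this final sign verification, rather than any new theoretical ingredient, to be the main obstacle; the generating-function machinery transfers verbatim from Theorems \ref{theorem2.9}--\ref{theorem2.10}, but the lack of a uniformly positive weight forces a case-by-case analysis.
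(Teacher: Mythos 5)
Your proposal follows essentially the same route as the paper: the paper's entire proof of this theorem is the single line ``By the similar way to the proofs of Theorems \ref{theorem2.9} and \ref{theorem2.10}, we obtain the desired results,'' i.e.\ form the symmetric part, take the weighted combination of the third-order generating functions, and invoke Lemmas \ref{lemma2.4}--\ref{lemma2.6}. Your writeup is in fact more forthcoming than the paper's, since you correctly flag that the weights $w_{p,q,r,s}$ and $w_{\overline{p},\overline{q},\overline{r},\overline{s}}$ need not both be positive, so the non-positivity of the combined generating function is a genuine case-by-case verification (resolved, as in the paper's Figs.\ \ref{FIG.1}--\ref{FIG.2}, partly by inspection) rather than a free consequence of Theorem \ref{theorem2.10}.
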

\begin{proof}
By the similar way to the proofs of Theorems \ref{theorem2.9} and \ref{theorem2.10}, we obtain the desired results.
\end{proof}

\section{Application to the space fractional diffusion equations: the one dimensional case of (\ref{1.2}) and  (\ref{1.2}) itself}
We use two subsections to derive the full discretization of (\ref{1.2}).  First, we present the scheme for the one dimensional case of (\ref{1.2}). The second subsection detailedly provides the full discrete scheme of the two-dimensional  fractional  diffusion equation (\ref{1.2}) with variable coefficients.

\subsection{Numerical scheme  for 1D}
In this subsection, we consider the one-dimensional case of (\ref{1.2}) with variable coefficients, namely,
\begin{equation}\label{3.1}
\frac{\partial u(x,t) }{\partial t}=d_{+}(x) _{x_L}D_x^{\alpha}u(x,t)+d_{-}(x)  _{x}D_{x_R}^{\alpha}u(x,t) +f(x,t).
\end{equation}
In the time direction, we use the Crank-Nicolson scheme.
 The fourth order   left fractional approximation operator
(\ref{2.25}), and right fractional approximation operator
(\ref{2.32}) are respectively used to discretize the left Riemann-Liouville fractional
derivative, and right Riemann-Liouville fractional derivative.

Let the mesh points $x_i=x_L+ih$, $i=-m,\ldots, 0,1,\ldots,N_x-1,N_x,\ldots,N_x+m$, where $m$ is defined by (\ref{2.20})
and $t_n=n\tau$, $0\leq n \leq {N_t}$, where $h=(x_R-x_L)/{N_x}$, $\tau=T/{N_t}$,
i.e., $h$ is the uniform space stepsize and $\tau$ the time steplength.
Taking  $u_{i}^n$ as the approximated value of $u(x_i,t_n)$ and  $d_{+,i}=d_{+}(x_i)$, $d_{-,i}=d_{-}(x_i)$,
$f_i^{n+1/2}=f(x_i,t_{n+1/2})$, where $t_{n+1/2}=(t_n+t_{n+1})/2$.
Then,  Eq. (\ref{3.1}) can be rewritten as
\begin{equation}\label{3.2}
\begin{split}
\frac{u(x_i,t_{n+1})-u(x_i,t_{n})}{\tau}=
&\frac{1}{2}\Big[d_{+,i}\,_{4L}\widetilde{A}_{p,q,r,s,\overline{p},\overline{q},\overline{r},\overline{s}}^{\alpha}u(x_i,t_{n+1})
                +d_{+,i}\,_{4L}\widetilde{A}_{p,q,r,s,\overline{p},\overline{q},\overline{r},\overline{s}}^{\alpha}u(x_i,t_{n})\\
&\quad               +d_{-,i}\,_{4R}\widetilde{A}_{p,q,r,s,\overline{p},\overline{q},\overline{r},\overline{s}}^{\alpha}u(x_i,t_{n+1})
                +d_{-,i}\,_{x_R} \,_{4R}\widetilde{A}_{p,q,r,s,\overline{p},\overline{q},\overline{r},\overline{s}}^{\alpha}u(x_i,t_{n}) \Big]\\
&\quad          +f(x_i,t_{n+1/2})+\mathcal{O}(\tau^2+h^4).
\end{split}
\end{equation}
Multiplying (\ref{3.2}) by $\tau$, we have the following equation
\begin{equation}\label{3.3}
\begin{split}
&\left[ 1- \frac{\tau }{2} \left( d_{+,i}\,_{4L}\widetilde{A}_{p,q,r,s,\overline{p},\overline{q},\overline{r},\overline{s}}^{\alpha}
                                 + d_{-,i}\,_{4R}\widetilde{A}_{p,q,r,s,\overline{p},\overline{q},\overline{r},\overline{s}}^{\alpha} \right)\right] u(x_i,t_{n+1})\\
&\quad=\left[ 1+ \frac{\tau }{2}\left( d_{+,i}\,_{4L}\widetilde{A}_{p,q,r,s,\overline{p},\overline{q},\overline{r},\overline{s}}^{\alpha}
                                + d_{-,i}\,_{4R}\widetilde{A}_{p,q,r,s,\overline{p},\overline{q},\overline{r},\overline{s}}^{\alpha} \right)\right]u(x_i,t_{n})
                                +\tau f(x_i,t_{n+1/2})+ R_i^{n+1},
\end{split}
\end{equation}
with
 \begin{equation}\label{3.4}
 |R_i^{n+1}|\leq \widetilde{c} \tau\left(\tau^2+h^4\right).
 \end{equation}
Therefore, the full discretization of
 (\ref{3.3}) has the following form
 \begin{equation}\label{3.5}
\begin{split}
&\left[ 1- \frac{\tau }{2} \left( d_{+,i}\,_{4L}\widetilde{A}_{p,q,r,s,\overline{p},\overline{q},\overline{r},\overline{s}}^{\alpha}
                                 + d_{-,i}\,_{4R}\widetilde{A}_{p,q,r,s,\overline{p},\overline{q},\overline{r},\overline{s}}^{\alpha} \right)\right]  u_i^{n+1}\\
&\quad=\left[ 1+ \frac{\tau }{2}\left( d_{+,i}\,_{4L}\widetilde{A}_{p,q,r,s,\overline{p},\overline{q},\overline{r},\overline{s}}^{\alpha}
                                + d_{-,i}\,_{4R}\widetilde{A}_{p,q,r,s,\overline{p},\overline{q},\overline{r},\overline{s}}^{\alpha} \right)\right] u_i^{n}
                                +\tau  f_i^{n+1/2},
\end{split}
\end{equation}
and it can be rewritten as
 \begin{equation}\label{3.6}
\begin{split}
 &u_i^{n+1} - \frac{\tau }{2} \left[\frac{ d_{+,i}}{h^{\alpha}}\sum_{k=0}^{i+m} \varphi_k^{\alpha}u_{i-k+m}^{n+1}
   +\frac{d_{-,i} }{h^{\alpha}} \sum_{k=0}^{N_x-i+m}\varphi_k^{\alpha}u_{i+k-m}^{n+1}    \right]     \\
 &\quad = u_i^{n} + \frac{\tau }{2}\left[ \frac{ d_{+,i}}{h^{\alpha}}\sum_{k=0}^{i+m} \varphi_k^{\alpha}u_{i-k+m}^{n}
          + \frac{d_{-,i}}{h^{\alpha}} \sum_{k=0}^{N_x-i+m}\varphi_k^{\alpha}u_{i+k-m}^{n} \right]  +\tau f_i^{n+1/2}.
\end{split}
\end{equation}
For the convenience of implementation, we use the matrix form of the grid functions
 \begin{equation*}
 U^{n}=[u_1^n,u_2^n,\ldots,u_{N_x-1}^n]^{\rm T}, ~~F^{n+1/2}=[f_1^{n+1/2},f_2^{n+1/2},\ldots,f_{N_x-1}^{n+1/2}]^{\rm T},
  \end{equation*}
therefore, the finite difference scheme (\ref{3.6}) can be recast as
 \begin{equation}\label{3.7}
\begin{split}
 &  \left[I - \frac{\tau }{2h^{\alpha}}\left(  D_{+}A_{\alpha}+D_{-}A_{\alpha}^T \right) \right] U^{n+1}
= \left[I+  \frac{\tau }{2h^{\alpha}}\left(  D_{+}A_{\alpha}+D_{-}A_{\alpha}^T  \right) \right] U^{n}+\tau F^{n+1/2},
\end{split}
\end{equation}
where $A_{\alpha}=A_{p,q,r,s,\overline{p},\overline{q},\overline{r},\overline{s}}^\alpha$ is defined by (\ref{2.31}), and
\begin{equation}\label{3.8}
D_{+}=\left [ \begin{matrix}
  d_{+,1}                        \\
       & d_{+,2}                  \\
       &       & \ddots            \\
       &       &      &d_{+,N_x-1}
 \end{matrix}
 \right ],
 \quad
D_{-}=\left [ \begin{matrix}
  d_{-,1}                        \\
       & d_{-,2}                  \\
       &       & \ddots            \\
       &       &      &d_{-,N_x-1}
 \end{matrix}
 \right ].
\end{equation}

\subsection{Numerical scheme  for 2D}

We now examine the full discretization scheme of (\ref{1.2}). For effectively performing the theoretical analysis,
we suppose  $d_{+}(x)=d_{+}(x,y)$,  $d_{-}(x)=d_{-}(x,y)$, and  $e_{+}(y)=e_{+}(x,y)$, $e_{-}(y)=e_{-}(x,y)$.

Analogously we still use the Crank-Nicolson scheme to do the discretization in time direction.
Let the mesh points $x_i=x_L+ih$, $i=-m,\ldots, 0,1,\ldots,N_x-1,N_x,\ldots,N_x+m$,
 and $y_j=y_L+j\Delta y$, $j=-m,\ldots, 0,1,\ldots,N_y-1,N_y,\ldots,N_y+m$, where $m$ is given in (\ref{2.20}), $t_n=n\tau$, $0\leq n \leq {N_t}$, and
 $\Delta x=(x_R-x_L)/{N_x}$, $\Delta y=(y_R-y_L)/{N_y}$, $\tau=T/{N_t}$;
and  $d_{+,i}=d_{+}(x_i,y_j)$, $d_{-,i}=d_{-}(x_i,y_j)$, and $e_{+,j}=e_{+}(x_i,y_j)$, $e_{-,j}=e_{-}(x_i,y_j)$.
Taking $u_{i,j}^n$ as the approximated value of $u(x_i,y_j,t_n)$ and
$f_{i,j}^{n+1/2}=f(x_i,y_j,t_{n+1/2})$, where $t_{n+1/2}=(t_n+t_{n+1})/2$.
Then, Eq. (\ref{1.2}) can be rewritten as

 \begin{equation}\label{3.9}
\begin{split}
&\left[\! 1\!-\! \frac{\tau }{2} \left(\! d_{+,i}\,_{4L}\widetilde{A}_{p,q,r,s,\overline{p},\overline{q},\overline{r},\overline{s}}^{\alpha}
                                \! +\! d_{-,i}\,_{4R}\widetilde{A}_{p,q,r,s,\overline{p},\overline{q},\overline{r},\overline{s}}^{\alpha}
                                \! + \! e_{+,j}\,_{4L}\widetilde{A}_{p,q,r,s,\overline{p},\overline{q},\overline{r},\overline{s}}^{\beta}
       \! +\! e_{-,j}\,_{4R}\widetilde{A}_{p,q,r,s,\overline{p},\overline{q},\overline{r},\overline{s}}^{\beta} \!\right)\!\right]\!u(x_i,y_j,t_{n+1})\\
&=\!\!\left[\! 1\!+\! \frac{\tau }{2} \left(\! d_{+,i}\,_{4L}\widetilde{A}_{p,q,r,s,\overline{p},\overline{q},\overline{r},\overline{s}}^{\alpha}
                                \! +\! d_{-,i}\,_{4R}\widetilde{A}_{p,q,r,s,\overline{p},\overline{q},\overline{r},\overline{s}}^{\alpha}
                                \! + \!  e_{+,j}\,_{4L}\widetilde{A}_{p,q,r,s,\overline{p},\overline{q},\overline{r},\overline{s}}^{\beta}
                                \! +\! e_{-,j}\,_{4R}\widetilde{A}_{p,q,r,s,\overline{p},\overline{q},\overline{r},\overline{s}}^{\beta} \!\right)\!\right]\! u(x_i,y_j,t_{n}) \\
 & \qquad                       +\tau f(x_i,y_j,t_{n+1/2})+ R_{i,j}^{n+1},
\end{split}
\end{equation}
with
 \begin{equation}\label{3.10}
 |R_{i,j}^{n+1}|\leq \widetilde{c} \tau\left(\tau^2+(\Delta x)^4+(\Delta y)^4\right).
 \end{equation}
Then, the resulting discretization of (\ref{3.9}) has the following form

 \begin{equation}\label{3.11}
\begin{split}
&\left[ 1- \frac{\tau }{2} \left( d_{+,i}\,_{4L}\widetilde{A}_{p,q,r,s,\overline{p},\overline{q},\overline{r},\overline{s}}^{\alpha}
                                 + d_{-,i}\,_{4R}\widetilde{A}_{p,q,r,s,\overline{p},\overline{q},\overline{r},\overline{s}}^{\alpha} +
e_{+,j}\,_{4L}\widetilde{A}_{p,q,r,s,\overline{p},\overline{q},\overline{r},\overline{s}}^{\beta}
        + e_{-,j}\,_{4R}\widetilde{A}_{p,q,r,s,\overline{p},\overline{q},\overline{r},\overline{s}}^{\beta} \right)\right] u_{i,j}^{n+1}\\
&\quad=\left[ 1+\frac{\tau }{2} \left( d_{+,i}\,_{4L}\widetilde{A}_{p,q,r,s,\overline{p},\overline{q},\overline{r},\overline{s}}^{\alpha}
                                 + d_{-,i}\,_{4R}\widetilde{A}_{p,q,r,s,\overline{p},\overline{q},\overline{r},\overline{s}}^{\alpha}
         +  e_{+,j}\,_{4L}\widetilde{A}_{p,q,r,s,\overline{p},\overline{q},\overline{r},\overline{s}}^{\beta}
                                 + e_{-,j}\,_{4R}\widetilde{A}_{p,q,r,s,\overline{p},\overline{q},\overline{r},\overline{s}}^{\beta} \right)\right]u_{i,j}^{n} \\
 & \qquad                       +\tau f_{i,j}^{n+1/2}.
\end{split}
\end{equation}
We further define
\begin{equation*}
\begin{split}
 &\delta_{\alpha,x}:= d_{+,i}\,_{4L}\widetilde{A}_{p,q,r,s,\overline{p},\overline{q},\overline{r},\overline{s}}^{\alpha}
                                 + d_{-,i}\,_{4R}\widetilde{A}_{p,q,r,s,\overline{p},\overline{q},\overline{r},\overline{s}}^{\alpha};\\
 &\delta_{\beta,y}:=e_{+,j}\,_{4L}\widetilde{A}_{p,q,r,s,\overline{p},\overline{q},\overline{r},\overline{s}}^{\beta}
        + e_{-,j}\,_{4R}\widetilde{A}_{p,q,r,s,\overline{p},\overline{q},\overline{r},\overline{s}}^{\beta},
\end{split}
\end{equation*}
thus Eq. (\ref{3.11}) can be rewritten as
\begin{equation}\label{3.12}
 \left (1-\frac{\tau}{2}\delta_{\alpha,x}-\frac{\tau}{2}\delta_{\beta,y}\right )u_{i,j}^{n+1}
 =\left (1+\frac{\tau}{2}\delta_{\alpha,x}+\frac{\tau}{2}\delta_{\beta,y}\right )u_{i,j}^{n}+ \tau f_{i,j}^{n+1/2}.
\end{equation}
The perturbation equation of (\ref{3.12}) is of the form
\begin{equation}\label{3.13}
\begin{split}
 \left (1-\frac{\tau}{2}\delta_{\alpha,x} \right )\left(1-\frac{\tau}{2}\delta_{\beta,y}\right )u_{i,j}^{n+1}
 =\left (1+\frac{\tau}{2}\delta_{\alpha,x}\right)\left(1+\frac{\tau}{2}\delta_{\beta,y}\right )u_{i,j}^{n}+ \tau f_{i,j}^{n+1/2}.
\end{split}
\end{equation}
Comparing (\ref{3.13}) with (\ref{3.12}), the splitting term is
given by
\begin{equation*}
\frac{\tau^2}{4}\delta_{\alpha,x}\delta_{\beta,y}(u_{i,j}^{n+1}-u_{i,j}^{n}),
\end{equation*}
since $(u_{i,j}^{n+1}-u_{i,j}^n)$ is an
$\mathcal{O}(\tau)$ term, it implies  that this perturbation
contributes an $\mathcal{O}(\tau^2)$ error component.

The system of equations defined by (\ref{3.13}) can be solved by the following schemes.

PR-ADI scheme [\cite{Peaceman:55}]:
\begin{equation}\label{3.14}
\left (1-\frac{\tau}{2}\delta_{\alpha,x} \right )u_{i,j}^{*}
 =\left(1+\frac{\tau}{2}\delta_{\beta,y}\right )u_{i,j}^{n}+ \frac{\tau}{2}
 f_{i,j}^{n+1/2};
\end{equation}
\begin{equation}\label{3.15}
\left(1-\frac{\tau}{2}\delta_{\beta,y}\right )u_{i,j}^{n+1}
=\left (1+\frac{\tau}{2}\delta_{\alpha,x}\right)u_{i,j}^{*}+
\frac{\tau}{2} f_{i,j}^{n+1/2}.
\end{equation}

 D-ADI scheme [\cite{Douglas:55}]:
\begin{equation}\label{3.16}
\left (1-\frac{\tau}{2}\delta_{\alpha,x} \right )u_{i,j}^{*}
 =\left(1+\frac{\tau}{2}\delta_{\alpha,x}+\tau\delta_{\beta,y}\right )u_{i,j}^{n}+\tau f_{i,j}^{n+1/2};
\end{equation}
\begin{equation}\label{3.17}
\left(1-\frac{\tau}{2}\delta_{\beta,y}\right )u_{i,j}^{n+1}
=u_{i,j}^{*}- \frac{\tau}{2}\delta_{\beta,y}u_{i,j}^{n}.
\end{equation}
Take
\begin{equation*}
\begin{split}
&\mathbf{U}^{n}=[u_{1,1}^n,u_{2,1}^n,\ldots,u_{N_x-1,1}^n,u_{1,2}^n,u_{2,2}^n,\dots,u_{N_x-1,2}^n,\ldots,u_{1,N_y-1}^n,u_{2,N_y-1}^n,\ldots,u_{N_x-1,N_y-1}^n]^T,\\
&\mathbf{F}^{n}=[f_{1,1}^n,f_{2,1}^n,\ldots,f_{N_x-1,1}^n,f_{1,2}^n,f_{2,2}^n,\dots,f_{N_x-1,2}^n,\ldots,f_{1,N_y-1}^n,f_{2,N_y-1}^n,\ldots,f_{N_x-1,N_y-1}^n]^T,\\
\end{split}
\end{equation*}
and denote
\begin{equation}\label{3.18}
\begin{split}
&\mathcal{A}_x =\frac{ \tau}{2(\Delta x)^{\alpha}} \left[ (I \otimes D_{+}) (I \otimes A_{\alpha})
               +(I \otimes D_{-}) (I \otimes A_{\alpha}^T) \right]
               =\frac{ \tau}{2(\Delta x)^{\alpha}} I \otimes \left(D_{+} A_{\alpha} + D_{-} A_{\alpha}^T\right),\\
&\mathcal{A}_y =\frac{ \tau}{2(\Delta y)^{\beta}} \left[(E_{+}  \otimes I)  (A_{\beta}  \otimes I)
               +(E_{-}  \otimes I)  ( A_{\beta}^T \otimes I )\right]
               =\frac{ \tau}{2(\Delta y)^{\beta}} \left(E_{+}A_{\beta}+E_{-}A_{\beta}^T\right)  \otimes I,
\end{split}
 \end{equation}
where $I$  denotes the unit matrix and  the symbol $\otimes$ the Kronecker product [see, \cite{Laub:05}],
and $A_{\alpha}=A_{p,q,r,s,\overline{p},\overline{q},\overline{r},\overline{s}}^\alpha$,    $A_{\beta}=A_{p,q,r,s,\overline{p},\overline{q},\overline{r},\overline{s}}^\beta$ are defined by (\ref{2.31}).
The matrices  $D_{+}$  and $D_{-}$ are defined by (\ref{3.8}), and
 \begin{equation}\label{3.19}
E_{+}=\left [ \begin{matrix}
  e_{+,1}                        \\
       & e_{+,2}                  \\
       &       & \ddots            \\
       &       &      &e_{+,N_y-1}
 \end{matrix}
 \right ],
 \quad
E_{-}=\left [ \begin{matrix}
  e_{-,1}                        \\
       & e_{-,2}                  \\
       &       & \ddots            \\
       &       &      &e_{-,N_y-1}
 \end{matrix}
 \right ].
\end{equation}
Therefore, the finite difference scheme (\ref{3.13}) has the following form
\begin{equation}\label{3.20}
  (I-\mathcal{A}_x )(I-\mathcal{A}_y )\mathbf{U}^{n+1}= (I+\mathcal{A}_x )(I+\mathcal{A}_y )\mathbf{U}^{n}+\tau \mathbf{F}^{n+1/2}.
\end{equation}
\begin{remark}\label{remark3.1}
The schemes (\ref{3.14})-(\ref{3.15}) and (\ref{3.16})-(\ref{3.17})
are equivalent, since both of them come from (\ref{3.13}), see [\cite{Deng:11}].
\end{remark}
\section{Convergence and Stability Analysis}
In this section, we  theoretically prove  that the  difference scheme is unconditionally stable  and 4th order convergent
in  space  directions and 2nd order convergent in time direction. In the following, the matrices $D_{+}$, $D_{-}$ and $E_{+}$, $E_{-}$ are defined by
(\ref{3.8}) and (\ref{3.19}), respectively.

\begin{lemma}\cite[p.\,140]{Laub:05}\label{lemma4.1}
Let $A \in \mathbb{R}^{m\times n}$, $B \in \mathbb{R}^{r\times s}$, $C \in \mathbb{R}^{n\times p}$, and $D \in \mathbb{R}^{s\times t}$.
Then
\begin{equation*}
  (A \otimes B)(C \otimes D)=AC \otimes  BD \quad (\in \mathbb{R}^{mr\times pt}).
\end{equation*}
Moreover, for all $A$ and $B$, $(A \otimes B)^T=A^T\otimes B^T$.
\end{lemma}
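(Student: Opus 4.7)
The plan is to prove both identities by direct block-wise computation from the definition of the Kronecker product. Recall that for $A\in\mathbb{R}^{m\times n}$ and $B\in\mathbb{R}^{r\times s}$, the Kronecker product $A\otimes B\in\mathbb{R}^{mr\times ns}$ is the block matrix whose $(i,j)$-block (an $r\times s$ matrix) is $a_{ij}B$, with $i=1,\ldots,m$ and $j=1,\ldots,n$. The whole argument is to show that the claimed identities hold block by block; no eigenvalue theory or continuity argument is needed.

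For the mixed-product identity, I would first check that the dimensions are compatible: $A\otimes B$ is $mr\times ns$ and $C\otimes D$ is $ns\times pt$, so their product is $mr\times pt$, matching $AC\otimes BD$. Then I would compute the $(i,k)$-block of $(A\otimes B)(C\otimes D)$ by summing over the $n$ column-blocks of $A\otimes B$ and the $n$ row-blocks of $C\otimes D$:
\begin{equation*}
\sum_{j=1}^{n}(a_{ij}B)(c_{jk}D)=\Bigl(\sum_{j=1}^{n}a_{ij}c_{jk}\Bigr)BD=(AC)_{ik}\,BD,
\end{equation*}
which is precisely the $(i,k)$-block of $AC\otimes BD$. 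Thus the two matrices agree block by block.

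For the transpose identity, I would observe that transposing a block matrix swaps blocks across the diagonal and transposes each block: the $(j,i)$-block of $(A\otimes B)^{T}$ is the transpose of the $(i,j)$-block of $A\otimes B$, namely $(a_{ij}B)^{T}=a_{ij}B^{T}$. On the other hand, the $(j,i)$-block of $A^{T}\otimes B^{T}$ is $(A^{T})_{ji}B^{T}=a_{ij}B^{T}$. The two coincide, so $(A\otimes B)^{T}=A^{T}\otimes B^{T}$.

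There is no real obstacle here; the only care needed is in consistently interpreting the double indexing (block index versus entry-within-block index) when rewriting the Kronecker products as ordinary matrices. Once that bookkeeping is fixed, both identities are immediate from matrix-multiplication and transpose applied block-wise, which is why the result is quoted directly from \cite{Laub:05}.
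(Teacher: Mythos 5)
Your block-wise computation is correct and is the standard textbook argument for both the mixed-product property and the transpose identity. The paper itself offers no proof --- it simply cites the result from \cite[p.\,140]{Laub:05} --- so there is nothing to compare against; your argument fills that gap correctly and completely.
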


\begin{lemma}\cite[p.\,141]{Laub:05}\label{lemma4.2}
Let $A \in \mathbb{R}^{n\times n}$ have eigenvalues $\{\lambda_i\}_{i=1}^n$ and $B \in \mathbb{R}^{m\times m}$ have eigenvalues $\{\mu_j\}_{j=1}^m$.
Then the $mn$ eigenvalues of $A \otimes B$ are
\begin{equation*}
  \lambda_1\mu_1,\ldots,\lambda_1\mu_m, \lambda_2\mu_1,\ldots,\lambda_2\mu_m,\ldots,\lambda_n\mu_1\ldots,\lambda_n\mu_m.
\end{equation*}
\end{lemma}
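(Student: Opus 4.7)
The plan is to reduce to triangular form via Schur decomposition and then read the eigenvalues off the diagonal of the Kronecker product.

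First I would invoke the Schur triangularization theorem to write $A = U T_A U^H$ and $B = V T_B V^H$, where $U \in \mathbb{C}^{n \times n}$ and $V \in \mathbb{C}^{m \times m}$ are unitary and $T_A$, $T_B$ are upper triangular with the eigenvalues $\lambda_1, \ldots, \lambda_n$ and $\mu_1, \ldots, \mu_m$ on their respective diagonals. Applying the mixed product rule from Lemma \ref{lemma4.1} twice gives
\begin{equation*}
A \otimes B = (U T_A U^H) \otimes (V T_B V^H) = (U \otimes V)(T_A \otimes T_B)(U^H \otimes V^H).
\end{equation*}
Since $U$ and $V$ are unitary, so is $U \otimes V$ (indeed $(U \otimes V)(U^H \otimes V^H) = UU^H \otimes VV^H = I_n \otimes I_m = I_{nm}$), hence $U^H \otimes V^H = (U \otimes V)^{-1}$ and $A \otimes B$ is similar to $T_A \otimes T_B$.

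Next I would verify that $T_A \otimes T_B$ is itself upper triangular, with diagonal entries equal to $\lambda_i \mu_j$ in the natural block ordering. Writing $T_A \otimes T_B$ in block form, the $(i,k)$-block equals $(T_A)_{ik} T_B$; this vanishes for $i > k$ because $T_A$ is upper triangular, and when $i = k$ the diagonal block is $\lambda_i T_B$, which is upper triangular with diagonal entries $\lambda_i \mu_1, \ldots, \lambda_i \mu_m$. Concatenating these diagonal blocks yields precisely the list $\lambda_1 \mu_1, \ldots, \lambda_1 \mu_m, \lambda_2 \mu_1, \ldots, \lambda_n \mu_m$ along the diagonal of $T_A \otimes T_B$.

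Finally, because similar matrices have the same spectrum (counting multiplicities) and the eigenvalues of an upper triangular matrix are exactly its diagonal entries, the $nm$ eigenvalues of $A \otimes B$ are the $nm$ products $\lambda_i \mu_j$ listed above. The only delicate point is the bookkeeping in the second step, namely checking that the block-triangular structure of $T_A \otimes T_B$ really does produce the stated enumeration of eigenvalues; this is a direct consequence of the definition of the Kronecker product and the upper-triangularity of the factors, so no genuine obstacle arises.
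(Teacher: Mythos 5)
Your argument is correct and complete. Note that the paper itself gives no proof of this lemma at all: it is quoted verbatim from Laub's book (the citation \cite[p.\,141]{Laub:05}), so there is nothing in the paper to compare against line by line. Your route --- Schur triangularization of both factors, the mixed product rule to show $A\otimes B$ is unitarily similar to $T_A\otimes T_B$, and then reading the products $\lambda_i\mu_j$ off the diagonal of the block upper triangular matrix $T_A\otimes T_B$ --- is the standard textbook proof and is essentially the one found in Laub. It is worth noting that this is genuinely stronger than the quicker ``eigenvector'' argument ($(A\otimes B)(x\otimes y)=\lambda\mu\,(x\otimes y)$), which only accounts for all $mn$ eigenvalues with correct multiplicities when $A$ and $B$ are diagonalizable; your Schur-based version handles defective matrices as well, which matters here since nothing guarantees the matrices $A_{\alpha}$, $A_{\beta}$ arising in (\ref{3.18}) are diagonalizable. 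The only cosmetic point is that you apply the mixed product rule (Lemma \ref{lemma4.1}, stated for real matrices) to complex unitary factors; the identity holds verbatim over $\mathbb{C}$, so this is harmless, but a one-line remark to that effect would make the proof airtight.
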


\begin{theorem}\label{theorem4.1}
 Let the  matrix $A_{\alpha}=A_{p,q,r,s,\overline{p},\overline{q},\overline{r},\overline{s}}^\alpha$ be defined by (\ref{2.31})
and  $D_{-}=\kappa_\alpha D_{+}$,  where $\kappa_\alpha$ is any given nonnegative constant.
 Then we have $\Re\left(\lambda\left(D_{+}( A_{\alpha}+ \kappa_\alpha A_{\alpha}^T)\right)\right)<0$.
\end{theorem}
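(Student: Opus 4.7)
The plan is to combine Theorem \ref{theorem2.11} (which provides that $A_{\alpha}$ is negative definite) with a similarity/congruence argument to turn the non-symmetric product $D_{+}(A_{\alpha}+\kappa_\alpha A_{\alpha}^T)$ into a matrix whose symmetric part is manifestly negative definite, so that Lemma \ref{lemma2.5} controls the real parts of its eigenvalues.

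First, I would set $M:=A_{\alpha}+\kappa_\alpha A_{\alpha}^T$ and compute its symmetric part
\[
\frac{M+M^T}{2}=(1+\kappa_\alpha)\,\frac{A_{\alpha}+A_{\alpha}^T}{2}=(1+\kappa_\alpha)H_{\alpha}.
\]
Because $1+\kappa_\alpha>0$ and Theorem \ref{theorem2.11} states that $A_{\alpha}$ is negative definite, Lemma \ref{lemma2.4} tells us $H_{\alpha}$ is symmetric negative definite, so the symmetric part of $M$ is negative definite and, by Lemma \ref{lemma2.4} again, $M$ itself is negative definite.

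Next, since $D_{+}$ is diagonal with nonnegative entries (I will take them strictly positive, as is the typical setting for an elliptic operator with positive diffusion; the degenerate case can be recovered by a limit $D_{+}+\epsilon I$ as $\epsilon\to 0^+$), the square root $D_{+}^{1/2}$ is well-defined, symmetric, and invertible. Then
\[
D_{+}^{-1/2}(D_{+}M)D_{+}^{1/2}=D_{+}^{1/2}M D_{+}^{1/2},
\]
so $D_{+}M$ and $D_{+}^{1/2}M D_{+}^{1/2}$ are similar and share spectra. The symmetric part of the latter equals
\[
D_{+}^{1/2}\Bigl(\tfrac{M+M^T}{2}\Bigr)D_{+}^{1/2}=(1+\kappa_\alpha)\,D_{+}^{1/2}H_{\alpha}D_{+}^{1/2},
\]
which is a congruence of a negative definite symmetric matrix by the nonsingular symmetric matrix $D_{+}^{1/2}$ and is therefore itself negative definite. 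Applying Lemma \ref{lemma2.5} to $D_{+}^{1/2}M D_{+}^{1/2}$ yields
\[
\Re\bigl(\lambda(D_{+}^{1/2}M D_{+}^{1/2})\bigr)\leq \lambda_{\max}\bigl((1+\kappa_\alpha)D_{+}^{1/2}H_{\alpha}D_{+}^{1/2}\bigr)<0,
\]
and because the spectra coincide, this gives $\Re(\lambda(D_{+}M))<0$ as required.

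The main obstacle is the bookkeeping when $D_{+}$ is only nonnegative: if some diagonal entry of $D_{+}$ vanishes then the corresponding row of $D_{+}M$ is zero, $0$ is an eigenvalue, and the similarity above fails; handling this requires either assuming strict positivity of the coefficients (the natural setting of the PDE) or performing the perturbation $D_{+}\leadsto D_{+}+\epsilon I$, establishing the strict inequality for $\epsilon>0$, and then using continuity of eigenvalues in $\epsilon$ together with the congruence identity above to pass to the limit. All remaining steps are routine once the negative definiteness of $H_{\alpha}$ from Theorem \ref{theorem2.11} is in hand.
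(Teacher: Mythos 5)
Your proof is correct and follows essentially the same route as the paper: the similarity $D_{+}^{-1/2}(D_{+}M)D_{+}^{1/2}=D_{+}^{1/2}MD_{+}^{1/2}$, negative definiteness of $A_{\alpha}$ from Theorem \ref{theorem2.11}, and Lemmas \ref{lemma2.4}--\ref{lemma2.5} applied to the symmetric part. Your observation that the argument needs $D_{+}$ to be strictly positive (the paper tacitly assumes this by writing $D_{+}^{-1/2}$, and a limiting argument would only yield the non-strict inequality) is a fair point of extra care, but does not change the substance of the proof.
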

\begin{proof}
  Since
\begin{equation*}
  D_{+}^{-\frac{1}{2}}\left[D_{+}( A_{\alpha}+ \kappa_\alpha A_{\alpha}^T)\right] D_{+}^{\frac{1}{2}}
   =    D_{+}^{\frac{1}{2}}  ( A_{\alpha}+ \kappa_\alpha A_{\alpha}^T) D_{+}^{\frac{1}{2}},
\end{equation*}
it means that $D_{+}( A_{\alpha}+ \kappa_\alpha A_{\alpha}^T)$ and $D_{+}^{\frac{1}{2}} ( A_{\alpha}+ \kappa_\alpha A_{\alpha}^T)D_{+}^{\frac{1}{2}}$  are similar.
From Theorem \ref{theorem2.11}, we know $A_{\alpha}$ and $A_{\alpha}^T$ are negative definite,  and thanks to Definition \ref{definition2.7}, it implies that
\begin{equation*}
  \left ( D_{+}^{\frac{1}{2}}( A_{\alpha}+ \kappa_\alpha A_{\alpha}^T) D_{+}^{\frac{1}{2}}x,x \right)
  =\left( ( A_{\alpha}+ \kappa_\alpha A_{\alpha}^T) D_{+}^{\frac{1}{2}}x,D_{+}^{\frac{1}{2}}x \right)<0, ~~ \forall x \in \mathbb{R}^{n}, x\neq 0,
\end{equation*}
i.e., the  matrix $\widetilde{A}:=D_{+}^{\frac{1}{2}}  ( A_{\alpha}+ \kappa_\alpha A_{\alpha}^T) D_{+}^{\frac{1}{2}}$ is negative definite.
From Lemma \ref{lemma2.4}, $\widetilde{H}=\frac{\widetilde{A}+\widetilde{A}^T}{2}$ is negative definite and $ \lambda_{max}(\widetilde{H})<0$;
and  according to  Lemma \ref{lemma2.5}, we obtain $\Re(\lambda(\widetilde{A})) \leq \lambda_{max}(\widetilde{H})<0$.
Therefore, $\Re\left(\lambda\left(D_{+}( A_{\alpha}+ \kappa_\alpha A_{\alpha}^T)\right)\right)=\Re(\lambda(\widetilde{A}))<0$.
\end{proof}

\begin{theorem}\label{theorem4.2}
Let $\mathcal{A}_x$ and $\mathcal{A}_y$  be defined by (\ref{3.18}) and  $D_{-}=\kappa_\alpha D_{+}$, $E_{-}=\kappa_\beta E_{+}$,
 where $\kappa_\alpha$ and $\kappa_\beta$ are any given nonnegative constants.  Then we have
$\Re\left(\lambda(\mathcal{A}_x )\right)<0$ and $\Re\left(\lambda(\mathcal{A}_y )\right)<0$.
\end{theorem}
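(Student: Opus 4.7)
The plan is to reduce the two claims to Theorem \ref{theorem4.1} by peeling off the Kronecker structure in the definition \eqref{3.18}, since each of $\mathcal{A}_x$ and $\mathcal{A}_y$ is, up to a positive scalar, a Kronecker product with an identity matrix.

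First I would rewrite $\mathcal{A}_x$ using the hypothesis $D_{-}=\kappa_\alpha D_{+}$:
\begin{equation*}
\mathcal{A}_x \;=\; \frac{\tau}{2(\Delta x)^{\alpha}}\, I \otimes \bigl(D_{+}A_{\alpha}+D_{-}A_{\alpha}^{T}\bigr)
\;=\; \frac{\tau}{2(\Delta x)^{\alpha}}\, I \otimes D_{+}\bigl(A_{\alpha}+\kappa_\alpha A_{\alpha}^{T}\bigr),
\end{equation*}
and analogously $\mathcal{A}_y = \frac{\tau}{2(\Delta y)^{\beta}}\, E_{+}(A_{\beta}+\kappa_\beta A_{\beta}^{T}) \otimes I$. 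At this point the hard content is already encoded in Theorem \ref{theorem4.1}, which tells me that every eigenvalue $\mu$ of $D_{+}(A_{\alpha}+\kappa_\alpha A_{\alpha}^{T})$ satisfies $\Re(\mu)<0$ (and likewise for the $\beta$-matrix acting with $E_{+}$, since Theorem \ref{theorem4.1} is stated for the generic $\alpha$-family but the proof uses only the negative-definiteness of $A_{\alpha}$ and $A_{\alpha}^{T}$ guaranteed by Theorem \ref{theorem2.11}, which applies equally to $A_\beta$ with $\beta\in(1,2)$).

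Next I invoke Lemma \ref{lemma4.2}: the eigenvalues of $I_{N_y-1}\otimes M$ are precisely the eigenvalues of $M$, each repeated $N_y-1$ times (since the eigenvalues of $I$ are all equal to $1$). Hence the spectrum of $\mathcal{A}_x$ consists of the values
\begin{equation*}
\frac{\tau}{2(\Delta x)^{\alpha}}\,\mu, \qquad \mu\in\sigma\!\bigl(D_{+}(A_{\alpha}+\kappa_\alpha A_{\alpha}^{T})\bigr),
\end{equation*}
each with multiplicity $N_y-1$. Since $\tau/(2(\Delta x)^\alpha)>0$ and $\Re(\mu)<0$ by Theorem \ref{theorem4.1}, I conclude $\Re(\lambda(\mathcal{A}_x))<0$. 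The same Kronecker-argument, applied to $M\otimes I_{N_x-1}$ with $M=E_{+}(A_{\beta}+\kappa_\beta A_{\beta}^{T})$, yields $\Re(\lambda(\mathcal{A}_y))<0$.

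There is no real obstacle here; the only care needed is in justifying that Theorem \ref{theorem4.1}'s argument transfers verbatim to the $\beta$-version (replacing $D_+$ by $E_+$ and $A_\alpha$ by $A_\beta$), which follows immediately since its proof uses only the negative-definiteness supplied by Theorem \ref{theorem2.11} and the similarity transformation $E_{+}^{-1/2}[\,\cdot\,]E_{+}^{1/2}$. Thus the result is essentially a two-line corollary of Theorem \ref{theorem4.1} and Lemma \ref{lemma4.2}.
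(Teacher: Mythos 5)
Your proposal is correct and follows essentially the same route as the paper: factor out $\kappa_\alpha$ (resp.\ $\kappa_\beta$) to write each operator as a positive scalar times a Kronecker product with the identity, apply Theorem \ref{theorem4.1} to the non-identity factor, and conclude via Lemma \ref{lemma4.2}. Your extra remark justifying the transfer of Theorem \ref{theorem4.1} to the $(E_{+},A_{\beta})$ pair is a detail the paper leaves implicit, but the argument is the same.
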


\begin{proof}
From (\ref{3.18}), there exists
  \begin{equation*}
\begin{split}
&\mathcal{A}_x =\frac{ \tau}{2(\Delta x)^{\alpha}} I \otimes \left(D_{+} A_{\alpha} + D_{-} A_{\alpha}^T\right)
               =\frac{ \tau}{2(\Delta x)^{\alpha}} I \otimes \left(D_{+}( A_{\alpha}+ \kappa_\alpha A_{\alpha}^T)\right),\\
&\mathcal{A}_y =\frac{ \tau}{2(\Delta y)^{\beta}} \left(E_{+}A_{\beta}+E_{-}A_{\beta}^T\right)  \otimes I
               =\frac{ \tau}{2(\Delta y)^{\beta}} \left(E_{+}( A_{\beta}+ \kappa_\beta A_{\beta}^T)  \right) \otimes I.
\end{split}
 \end{equation*}
By Theorem \ref{theorem4.1}, we get  $\Re\left(\lambda\left(D_{+}( A_{\alpha}+ \kappa_\alpha A_{\alpha}^T)\right)\right)<0$
and $\Re\left(\lambda\left(E_{+}( A_{\beta}+ \kappa_\beta A_{\beta}^T)\right)\right)<0$.
Then,  according to  Lemma \ref{lemma4.2}, it implies that $\Re\left(\lambda(\mathcal{A}_x )\right)<0$ and $\Re\left(\lambda(\mathcal{A}_y )\right)<0$.
\end{proof}

\begin{remark}\label{remark4.1}
If taking $\kappa_\alpha=\kappa_\beta=0$, then Eq. (\ref{1.2}) becomes the one-sided fractional diffusion equation; and
 $\kappa_\alpha=\kappa_\beta=1$, Eq. (\ref{1.2}) reduces to the space-Riesz fractional diffusion equation.
\end{remark}

\subsection{Stability and Convergence for 1D }

\begin{theorem}\label{theorem4.3}
Let $D_{-}=\kappa_\alpha D_{+}$,
then the difference scheme (\ref{3.7})  with $\alpha \in (1,2)$
is unconditionally stable.
\end{theorem}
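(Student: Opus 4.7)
The plan is to view (3.7) as the Crank--Nicolson discretization of a linear ODE system and bound the amplification matrix in a weighted $\ell^2$ norm. Set
\[
M := \frac{\tau}{2h^{\alpha}}\left(D_{+}A_{\alpha}+D_{-}A_{\alpha}^T\right),
\]
so that (3.7) reads $(I-M)U^{n+1}=(I+M)U^{n}+\tau F^{n+1/2}$. Unconditional stability will follow once we show that the propagator $G=(I-M)^{-1}(I+M)$ is a non-expansion in a fixed norm (independent of $h,\tau$), since then $\|U^{n}\|\leq \|U^{0}\|+\tau\sum_{k}\|(I-M)^{-1}F^{k+1/2}\|$ by induction.

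The concrete steps I would carry out are as follows. First, use the hypothesis $D_{-}=\kappa_{\alpha}D_{+}$ to rewrite $M=\frac{\tau}{2h^{\alpha}}D_{+}(A_{\alpha}+\kappa_{\alpha}A_{\alpha}^T)$, putting the scheme in the same algebraic form that was handled in Theorem \ref{theorem4.1}. Second, mimic the symmetrization used there: introduce $V^{n}:=D_{+}^{-1/2}U^{n}$ and multiply the scheme by $D_{+}^{-1/2}$ on the left to obtain
\[
(I-\widetilde{M})V^{n+1}=(I+\widetilde{M})V^{n}+\tau D_{+}^{-1/2}F^{n+1/2},\qquad \widetilde{M}:=\frac{\tau}{2h^{\alpha}}D_{+}^{1/2}(A_{\alpha}+\kappa_{\alpha}A_{\alpha}^T)D_{+}^{1/2}.
\]
Third, compute
\[
\widetilde{M}+\widetilde{M}^{T}=\frac{\tau(1+\kappa_{\alpha})}{2h^{\alpha}}D_{+}^{1/2}(A_{\alpha}+A_{\alpha}^T)D_{+}^{1/2},
\]
which is negative (semi)definite because Theorem \ref{theorem2.11} tells us that the symmetric part of $A_{\alpha}$ is negative definite, and congruence by $D_{+}^{1/2}$ preserves that sign. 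Fourth, invoke the Cayley-transform identity
\[
\|(I+\widetilde{M})x\|_{2}^{2}-\|(I-\widetilde{M})x\|_{2}^{2}=2x^{T}(\widetilde{M}+\widetilde{M}^{T})x\leq 0,
\]
which immediately yields $\|(I-\widetilde{M})^{-1}(I+\widetilde{M})\|_{2}\leq 1$. Iterating in $n$ bounds $\|V^{n}\|_{2}$ in terms of initial and source data, and reverting $U^{n}=D_{+}^{1/2}V^{n}$ delivers the stability estimate in the $U$-variable.

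The main obstacle I anticipate is the invertibility of $D_{+}^{1/2}$: the problem statement only requires $d_{+}(x)\geq 0$, so some diagonal entries could vanish and $D_{+}^{-1/2}$ would not literally exist. I would handle this either by restricting attention to the support of $d_{+}$ (the corresponding rows of the left-derivative operator vanish identically, so they can be treated separately) or by a standard perturbation argument, replacing $D_{+}$ by $D_{+}+\varepsilon I$, carrying out the argument above, and letting $\varepsilon\to 0^{+}$ in the resulting uniform bound. A minor secondary point is that the norm-equivalence constant between $U^n$ and $V^n$ depends on $\|D_{+}^{\pm 1/2}\|_{2}$, but under the assumed boundedness of the coefficient $d_{+}(x)$ this is a $\tau,h$-independent constant and does not affect unconditional stability.
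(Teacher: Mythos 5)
Your argument is correct and reaches the stated conclusion, but it takes a genuinely different --- and in fact stronger --- route than the paper. The paper writes the error propagator as $(I-A)^{-1}(I+A)$ with $A=\frac{\tau}{2h^{\alpha}}D_{+}(A_{\alpha}+\kappa_{\alpha}A_{\alpha}^{T})$, invokes Theorem \ref{theorem4.1} to get $\Re(\lambda(A))<0$, maps eigenvalues through $\lambda\mapsto(1+\lambda)/(1-\lambda)$, and concludes only that the \emph{spectral radius} of the propagator is below $1$. You instead symmetrize by congruence with $D_{+}^{1/2}$ and bound the actual $2$-norm of the transformed propagator via the identity $\|(I+\widetilde{M})x\|_{2}^{2}-\|(I-\widetilde{M})x\|_{2}^{2}=2x^{T}(\widetilde{M}+\widetilde{M}^{T})x\le 0$, using the negative definiteness of the symmetric part of $A_{\alpha}$ guaranteed by Theorem \ref{theorem2.11} together with Lemma \ref{lemma2.4}, and the fact that $1+\kappa_{\alpha}>0$. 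What this buys: for a non-normal iteration matrix, spectral radius strictly less than $1$ does not by itself bound $\|G^{n}\|$ uniformly in $n$ and in the matrix dimension, whereas your norm bound does; indeed it is exactly the estimate $\|(I-A)^{-1}(I+A)\|_{2}\le 1$ that the paper later needs, and asserts only by citation, in the convergence proof of Theorem \ref{theorem4.4}. Your treatment of a possibly singular $D_{+}$ by restriction or by the perturbation $D_{+}+\varepsilon I$ is likewise more careful than the paper, which writes $D_{+}^{-1/2}$ without comment in Theorem \ref{theorem4.1} (harmless here, since the interior grid values of $d_{+}$ are positive in the examples). Two minor points worth making explicit: the invertibility of $I-\widetilde{M}$ follows from your own inequality (if $(I-\widetilde{M})x=0$ then $(I+\widetilde{M})x=0$, forcing $x=0$), and the final stability constant picks up the condition number of $D_{+}^{1/2}$, which is $\tau,h$-independent as you note.
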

\begin{proof}
Let $\widetilde{u}_{i}^n~(i=1,2,\ldots,N_x-1;\,n=0,1,\ldots,N_t)$ be the approximate solution of $u_i^n$,
which is the exact solution of the difference scheme (\ref{3.7}).
Putting $\epsilon_i^n=\widetilde{u}_i^n-u_i^n$, and denoting $\epsilon^n=[\epsilon_1^n,\epsilon_2^n,\ldots, \epsilon_{N_x-1}^n]$,
 then from (\ref{3.7}) we obtain the following perturbation equation
 \begin{equation*}
  (I-A)\epsilon^{n+1}=(I+A)\epsilon^n,
\end{equation*}
i.e.,
 \begin{equation*}
 \epsilon^{n+1}= (I-A)^{-1}(I+A)\epsilon^n,
\end{equation*}
with
\begin{equation}\label{4.1}
  A= \frac{\tau }{2h^{\alpha}}D_{+}( A_{\alpha}+ \kappa_\alpha A_{\alpha}^T).
\end{equation}
Denoting $\lambda$ as an eigenvalue of the matrix $A$, then from Theorem \ref{theorem4.1}, we get $\Re(\lambda(A))<0$.
Note that
 $\lambda$ is an eigenvalue of the matrix $A$ if and only if
$1-\lambda$ is an eigenvalue of the matrix $I-A$, if and only if $(1-\lambda)^{-1}(1+\lambda)$ is an
eigenvalue of the matrix $(I-A)^{-1}(I+A)$. Since $\Re(\lambda(A))<0 $, it implies that $|(1-\lambda)^{-1}(1+\lambda)|<1$.
Thus, the spectral radius of the matrix $(I-A)^{-1}(I+A)$ is less than $1$, hence the scheme (\ref{3.7})
is unconditionally stable.

\end{proof}

\begin{theorem}\label{theorem4.4}
Let $u(x_i,t_n)$ be the exact solution of (\ref{3.1})  with $\alpha \in (1,2)$, $u_i^n$ the  solution of
the  finite difference scheme (\ref{3.7}), and $D_{-}=\kappa_\alpha D_{+}$,  then there is a positive constant $C$ such that
\begin{equation*}
  \begin{split}
||u(x_i,t_n)-u_i^n||_2 \leq  C (\tau^2+h^4), \quad i=1,2,\ldots,N_x-1;\,n=0,1,\ldots,N_t.
  \end{split}
  \end{equation*}
\end{theorem}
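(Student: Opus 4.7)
The plan is to set up the standard error recursion and then use the same algebraic fact underlying the stability proof of Theorem \ref{theorem4.3} to accumulate the local truncation error from (\ref{3.4}) into the global bound.

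First I would define the nodal error $e_i^n = u(x_i,t_n) - u_i^n$ and form the vector $e^n = [e_1^n,\dots,e_{N_x-1}^n]^{\rm T}$. Since $u(x_i,t_n)$ satisfies (\ref{3.3}) with remainder $R_i^{n+1}$ bounded by $\tilde c\,\tau(\tau^2+h^4)$, while the numerical values satisfy (\ref{3.7}) exactly, subtracting yields $(I-A)e^{n+1} = (I+A)e^n + R^{n+1}$, where $A = \frac{\tau}{2h^{\alpha}} D_+(A_\alpha + \kappa_\alpha A_\alpha^T)$ as in (\ref{4.1}), and $e^0 = 0$. Unrolling the recursion with $M := (I-A)^{-1}(I+A)$ gives $e^n = \sum_{k=1}^n M^{n-k}(I-A)^{-1} R^k$.

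Next, to turn the spectral information of Theorems \ref{theorem4.1}--\ref{theorem4.2} into a genuine norm bound, I would pass to the symmetrized matrix $\tilde A = \frac{\tau}{2h^{\alpha}} D_+^{1/2}(A_\alpha + \kappa_\alpha A_\alpha^T) D_+^{1/2}$, which is similar to $A$ via $P = D_+^{1/2}$ and whose symmetric part is negative definite (exactly what Theorem \ref{theorem4.1} establishes). A direct calculation $\|(I+\tilde A)x\|_2^2 - \|(I-\tilde A)x\|_2^2 = 4(\tilde H x, x) \le 0$ shows that both $(I-\tilde A)^{-1}(I+\tilde A)$ and $(I-\tilde A)^{-1}$ have spectral norm at most $1$. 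Working in the weighted norm $\|v\|_\ast := \|P^{-1}v\|_2$ therefore gives $\|M\|_\ast \le 1$ and $\|(I-A)^{-1}\|_\ast \le 1$, since $M = P\tilde M P^{-1}$ and $(I-A)^{-1} = P(I-\tilde A)^{-1}P^{-1}$. Inserting these into the iterated error formula yields $\|e^n\|_\ast \le \sum_{k=1}^n \|R^k\|_\ast$. Using (\ref{3.4}) to bound each $R^k$ component-wise, the equivalence of $\|\cdot\|_\ast$ and $\|\cdot\|_2$ that follows from $d_+$ being bounded above and away from zero on the grid, the $\sqrt{h(N_x-1)}$ factor coming from the discrete $L^2$ norm, and $n\tau \le T$, I obtain $\|e^n\|_2 \le C(\tau^2 + h^4)$.

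The main obstacle is precisely the leap from the spectral-radius-based stability of Theorem \ref{theorem4.3} to a \emph{uniform} bound on powers of $M$, which is genuinely needed to accumulate the truncation error over $\mathcal{O}(1/\tau)$ time steps; the remedy is the similarity to a matrix with a negative-definite symmetric part together with the Cayley-transform identity. A secondary technical caveat is that $D_+^{1/2}$ requires $d_+(x_i)>0$; in the generic case where $d_+$ is strictly positive on $[x_L,x_R]$ this is immediate, and the degenerate case $d_+(x_i)=0$ can be handled by a standard perturbation-and-limit argument.
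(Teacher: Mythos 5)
Your proposal is correct and follows the same skeleton as the paper's proof: form $e^n$, subtract (\ref{3.2}) from (\ref{3.7}) to get $(I-A)e^{n+1}=(I+A)e^n+R^{n+1}$ with $A$ as in (\ref{4.1}), bound the iteration operator, and accumulate the local truncation error (\ref{3.4}) over $\mathcal{O}(1/\tau)$ steps. The difference lies in the one step that actually carries the weight. The paper simply asserts, ``similar to the proof of Theorem 4.2 of Deng \& Chen,'' that $\|(I-A)^{-1}(I+A)\|_2$ and $\|(I-A)^{-1}\|_2$ are less than $1$, and then sums $\|e^n\|_2\le\sum_k|R^{k+1}|$. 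You instead supply the argument: conjugate by $P=D_+^{1/2}$ to the matrix $\tilde A$ whose symmetric part is negative definite (Theorem \ref{theorem4.1}), use the identity $\|(I+\tilde A)x\|_2^2-\|(I-\tilde A)x\|_2^2=4(\tilde Hx,x)\le 0$ to get genuine $2$-norm contractivity of the Cayley transform of $\tilde A$, and then carry the whole recursion in the weighted norm $\|P^{-1}\cdot\|_2$, converting to $\|\cdot\|_2$ only once at the end. This is not merely filling in a citation: since $A$ itself is neither symmetric nor normal, negativity of $\Re(\lambda(A))$ controls only the spectral radius of $(I-A)^{-1}(I+A)$, not its $2$-norm, so the uniform bound on powers of the iteration matrix really does require the similarity-plus-weighted-norm device (or an equivalent); your version is the more careful one. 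Your explicit attention to the $\sqrt{h(N_x-1)}$ factor when passing from the componentwise bound $|R_i^{n+1}|\le\tilde c\,\tau(\tau^2+h^4)$ to a vector norm, and to the requirement that $d_+$ be bounded above and away from zero for the norm equivalence (with the degenerate case handled by perturbation), likewise addresses points the paper glosses over. The two approaches buy the same theorem; yours is self-contained and makes the hidden hypotheses visible.
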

\begin{proof}
Denoting $e_i^n=u(x_i,t_n)-u_i^n$, and    $e^n=[e_1^n,e_2^n,\ldots, e_{N_x-1}^n]^T$.
 Subtracting (\ref{3.2}) from (\ref{3.7}) and using $e^0=0$, we obtain
 \begin{equation*}
  (I-A)e^{n+1}=(I+A)e^n+R^{n+1},
\end{equation*}
where $A$ is defined by (\ref{4.1}), and $R^n=[R_1^n,R_2^n,\ldots, R_{N_x-1}^n]^T$. The above equation can be rewritten as
  \begin{equation*}
  e^{n+1}=(I-A)^{-1}(I+A)e^n+(I-A)^{-1}R^{n+1}.
\end{equation*}
Similar to the proof of Theorem 4.2 of [\cite{Deng:11}], we have that $\|(I-A)^{-1}(I+A)\|_2$ and $\|(I-A)^{-1}\|_2$ are less than $1$.
 Then, using  $|R_i^{n+1}|\leq \widetilde{c} \tau(\tau^2+h^4)$ in (\ref{3.4}), we obtain
\begin{equation*}
\begin{split}
  ||e^{n}||_2 &\leq ||(I-A)^{-1}(I+A)||_2\cdot||e^{n-1}||_2+||(I-A)^{-1}||_2\cdot|R^{n}|\\
            & \leq  ||e^{n-1}||_2+|R^{n}|\leq\sum_{k=0}^{n-1}|R^{k+1}|\leq  c (\tau^2+h^4).
\end{split}
\end{equation*}
\end{proof}

\subsection{Stability and Convergence for 2D }

\begin{theorem}\label{theorem4.5}
Let $D_{-}=\kappa_\alpha D_{+}$ and $E_{-}=\kappa_\beta E_{+}$,
then the difference scheme (\ref{3.20}) with $1< \alpha,\beta <2$
is unconditionally stable.
\end{theorem}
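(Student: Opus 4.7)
The plan is to mimic the 1D argument from Theorem \ref{theorem4.3}: reduce the question to a spectral radius estimate on the iteration matrix associated with (\ref{3.20}). First I would exploit the Kronecker structure of $\mathcal{A}_x$ and $\mathcal{A}_y$ shown in (\ref{3.18}). Writing $M_x=\frac{\tau}{2(\Delta x)^\alpha}(D_+A_\alpha+D_-A_\alpha^T)$ and $M_y=\frac{\tau}{2(\Delta y)^\beta}(E_+A_\beta+E_-A_\beta^T)$, we have $\mathcal{A}_x = I\otimes M_x$ and $\mathcal{A}_y = M_y\otimes I$. Lemma \ref{lemma4.1} immediately yields that $\mathcal{A}_x$ and $\mathcal{A}_y$ commute, and also
\begin{equation*}
(I-\mathcal{A}_x)(I-\mathcal{A}_y)=(I-M_y)\otimes(I-M_x),\qquad (I+\mathcal{A}_x)(I+\mathcal{A}_y)=(I+M_y)\otimes(I+M_x).
\end{equation*}

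Next, introducing the error $\boldsymbol{\epsilon}^n$ between a perturbed and the exact discrete solution, (\ref{3.20}) gives the perturbation equation
\begin{equation*}
\boldsymbol{\epsilon}^{n+1}= \bigl[(I-M_y)^{-1}(I+M_y)\bigr]\otimes\bigl[(I-M_x)^{-1}(I+M_x)\bigr]\,\boldsymbol{\epsilon}^{n},
\end{equation*}
provided $I-M_x$ and $I-M_y$ are invertible. Invertibility follows from Theorem \ref{theorem4.2}, which guarantees $\Re(\lambda(M_x))<0$ and $\Re(\lambda(M_y))<0$ (so $1$ is not an eigenvalue). For each such eigenvalue $\lambda$, the elementary Cayley-type identity $\bigl|(1+\lambda)/(1-\lambda)\bigr|<1$ holds whenever $\Re(\lambda)<0$, exactly as in the proof of Theorem \ref{theorem4.3}.

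Finally, by Lemma \ref{lemma4.2}, every eigenvalue of the iteration matrix is a product of one eigenvalue of $(I-M_x)^{-1}(I+M_x)$ and one of $(I-M_y)^{-1}(I+M_y)$; the preceding bound then shows every such product has modulus strictly less than $1$. Hence the spectral radius of the iteration matrix is $<1$, giving unconditional stability. The only technical point to be careful about is justifying the commutation and the factorization as a Kronecker product of Cayley transforms, which I expect to be the main (but modest) obstacle; everything else reduces to invoking Theorem \ref{theorem4.2}, Lemma \ref{lemma4.1}, and Lemma \ref{lemma4.2} in sequence.
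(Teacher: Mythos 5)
Your proposal is correct and rests on the same three ingredients as the paper's proof (the Kronecker structure in (\ref{3.18}), Theorem \ref{theorem4.2}, and the Cayley-transform bound $|(1+\lambda)/(1-\lambda)|<1$ for $\Re(\lambda)<0$), but you organize the final step differently, and in a way that is slightly tighter. The paper uses Lemma \ref{lemma4.1} only to show that $\mathcal{A}_x$ and $\mathcal{A}_y$ commute, reorders the perturbation equation into $\mathbf{\bm\epsilon}^{n+1}=(I-\mathcal{A}_x)^{-1}(I+\mathcal{A}_x)(I-\mathcal{A}_y)^{-1}(I+\mathcal{A}_y)\mathbf{\bm\epsilon}^{n}$, and then concludes from the fact that each of the two Cayley factors has spectral radius less than $1$; strictly speaking that last inference needs the commutativity again (the spectral radius of a product of non-commuting matrices is not controlled by the product of spectral radii), and the paper leaves this implicit. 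You instead factor the entire iteration matrix as the single Kronecker product $\bigl[(I-M_y)^{-1}(I+M_y)\bigr]\otimes\bigl[(I-M_x)^{-1}(I+M_x)\bigr]$ and invoke Lemma \ref{lemma4.2} once on that product, so every eigenvalue of the iteration matrix is exhibited explicitly as a product of two numbers of modulus less than $1$. This buys you an airtight spectral-radius bound with no appeal to commutativity beyond the algebra of Lemma \ref{lemma4.1}, at the cost of nothing; the one point you should still spell out is that the eigenvalues of $M_x$ (equivalently of $D_+(A_\alpha+\kappa_\alpha A_\alpha^T)$ up to the positive scalar $\tau/(2(\Delta x)^\alpha)$) have negative real part, which is exactly Theorem \ref{theorem4.1}, so the invertibility of $I-M_x$ and $I-M_y$ and the Cayley bound both follow as you claim.
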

\begin{proof}
Let $\widetilde{u}_{i,j}^n~(i=1,2,\ldots,N_x-1;j=1,2,\ldots,N_y-1;n=0,1,\ldots,N_t)$ be the approximate solution of ${u}_{i,j}^n$,
which is the exact solution of the  difference scheme (\ref{3.20}).
Taking $\epsilon_{i,j}^n=\widetilde{u}_{i,j}^n-{u}_{i,j}^n$,
 then from (\ref{3.20}) we obtain the following perturbation equation
 \begin{equation}\label{4.2}
  (I-\mathcal{A}_x )(I-\mathcal{A}_y )\mathbf{\bm \epsilon}^{n+1}= (I+\mathcal{A}_x )(I+\mathcal{A}_y )\mathbf{\bm \epsilon}^{n},
\end{equation}
where $\mathcal{A}_x$ and  $\mathcal{A}_y$ are given by (\ref{3.18}), and
\begin{equation*}
\begin{split}
&\mathbf{\bm \epsilon}^{n}=[\epsilon_{1,1}^n,\epsilon_{2,1}^n,\ldots,\epsilon_{N_x-1,1}^n,
   \epsilon_{1,2}^n,\epsilon_{2,2}^n,\dots,\epsilon_{N_x-1,2}^n,\ldots,\epsilon_{1,N_y-1}^n,\epsilon_{2,N_y-1}^n,\ldots,\epsilon_{N_x-1,N_y-1}^n]^T.
\end{split}
\end{equation*}
Then Eq. (\ref{4.2}) can be rewritten as
 \begin{equation}\label{4.3}
  \mathbf{\bm \epsilon}^{n+1}= (I-\mathcal{A}_y )^{-1}(I-\mathcal{A}_x )^{-1}(I+\mathcal{A}_x )(I+\mathcal{A}_y )\mathbf{\bm \epsilon}^{n}.
\end{equation}
According to Lemma \ref{lemma4.1} and (\ref{3.18}), it is easy to check that $\mathcal{A}_x$ and $\mathcal{A}_y$ commute, i.e.,
\begin{equation}\label{4.4}
\begin{split}
\mathcal{A}_x\mathcal{A}_y=\mathcal{A}_y\mathcal{A}_x  =\frac{ \tau^2}{4(\Delta x)^{\alpha}(\Delta y)^{\beta}}
\left(E_{+}A_{\beta}+E_{-}A_{\beta}^T\right)  \otimes \left(D_{+} A_{\alpha} + D_{-} A_{\alpha}^T\right).
\end{split}
 \end{equation}
 Then Eq. (\ref{4.3}) has the following form
  \begin{equation}\label{4.5}
  \mathbf{\bm \epsilon}^{n+1}= (I-\mathcal{A}_x )^{-1}(I+\mathcal{A}_x )(I-\mathcal{A}_y )^{-1}(I+\mathcal{A}_y )\mathbf{\bm \epsilon}^{n}.
\end{equation}
Form Theorem \ref{theorem4.2}, we have
$\Re\left(\lambda(\mathcal{A}_x )\right)<0$ and $\Re\left(\lambda(\mathcal{A}_y )\right)<0$.
Similar to the proof of the Theorem \ref{theorem4.3},
 the spectral radius of the matrix $(I-\mathcal{A}_x)^{-1}(I+\mathcal{A}_x)$ and $(I-\mathcal{A}_y)^{-1}(I+\mathcal{A}_y)$ are less than $1$.
Then the difference scheme (\ref{3.20}) is unconditionally stable.

\end{proof}

\begin{theorem}\label{theorem4.6}
Let $u(x_i,y_j,t_n)$ be the exact solution of (\ref{1.2}) with $1< \alpha,\beta <2$,
 $u_{i,j}^n$ the  solution of
the  finite difference scheme (\ref{3.20}), and $D_{-}=\kappa_\alpha D_{+}$ and $E_{-}=\kappa_\beta E_{+}$,  then there is a positive constant $C$ such that
\begin{equation*}
  \begin{split}
||u(x_i,y_j,t_n)-u_{i,j}^n||_2 \leq  C (\tau^2+(\Delta x)^4+(\Delta y)^4),
  \end{split}
  \end{equation*}
with
$i=1,2,\ldots,N_x-1;j=1,2,\ldots,N_y-1;\,n=0,1,\ldots,N_t.$
\end{theorem}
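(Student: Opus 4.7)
The plan is to reproduce the one-dimensional argument of Theorem~\ref{theorem4.4} in the Kronecker/ADI framework of Section~3.2. Set $e_{i,j}^n = u(x_i,y_j,t_n) - u_{i,j}^n$ and let $\mathbf{e}^n$ denote its vectorization in the same lexicographic order used for $\mathbf{U}^n$. Applying the ADI factorization (\ref{3.13}) to the exact solution produces a consistency identity that differs from the scheme (\ref{3.20}) only by a residual $\mathbf{R}^{n+1}$ built from the WSLD/Crank-Nicolson truncation bound (\ref{3.10}) together with the ADI splitting perturbation $\tfrac{\tau^2}{4}\delta_{\alpha,x}\delta_{\beta,y}(u^{n+1}-u^n)$ noted after (\ref{3.13}); since the latter is $O(\tau^3)$ on a smooth solution, this yields $\|\mathbf{R}^{n+1}\|_2 \le \widetilde{C}\,\tau\bigl(\tau^2 + (\Delta x)^4 + (\Delta y)^4\bigr)$ and the error recurrence
\begin{equation*}
(I-\mathcal{A}_x)(I-\mathcal{A}_y)\,\mathbf{e}^{n+1} = (I+\mathcal{A}_x)(I+\mathcal{A}_y)\,\mathbf{e}^n + \mathbf{R}^{n+1}, \qquad \mathbf{e}^0 = 0.
\end{equation*}

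Next I exploit the commutativity (\ref{4.4}) of $\mathcal{A}_x$ and $\mathcal{A}_y$ guaranteed by Lemma~\ref{lemma4.1}, which makes the factors $(I\pm\mathcal{A}_x)^{\pm 1}$ and $(I\pm\mathcal{A}_y)^{\pm 1}$ mutually commute. Rearranging yields
\begin{equation*}
\mathbf{e}^{n+1} = \mathcal{M}_x\mathcal{M}_y\,\mathbf{e}^n + (I-\mathcal{A}_x)^{-1}(I-\mathcal{A}_y)^{-1}\mathbf{R}^{n+1},
\end{equation*}
where $\mathcal{M}_\xi := (I-\mathcal{A}_\xi)^{-1}(I+\mathcal{A}_\xi)$ for $\xi\in\{x,y\}$. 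By Theorem~\ref{theorem4.2} every eigenvalue of $\mathcal{A}_x$ and $\mathcal{A}_y$ has negative real part, and by the same argument invoked at the end of Theorem~\ref{theorem4.4} (ultimately going back to Theorem~4.2 of~\cite{Deng:11}), one obtains $\|\mathcal{M}_x\|_2, \|\mathcal{M}_y\|_2 \le 1$ and $\|(I-\mathcal{A}_x)^{-1}\|_2, \|(I-\mathcal{A}_y)^{-1}\|_2 \le 1$.

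Once these four bounds are in hand, the triangle inequality produces $\|\mathbf{e}^{n+1}\|_2 \le \|\mathbf{e}^n\|_2 + \|\mathbf{R}^{n+1}\|_2$, and telescoping over the $n \le N_t = T/\tau$ time levels starting from $\mathbf{e}^0 = 0$ absorbs the extra $\tau$ in the residual estimate and delivers $\|\mathbf{e}^n\|_2 \le C(\tau^2 + (\Delta x)^4 + (\Delta y)^4)$. The main obstacle is the operator-norm step: $\mathcal{A}_x$ and $\mathcal{A}_y$ are nonsymmetric products of diagonal factors $D_\pm$, $E_\pm$ with the Toeplitz-like blocks $A_\alpha$, $A_\beta$, so $\Re(\lambda)<0$ alone does not directly bound $\|\mathcal{M}_\xi\|_2$ in the Euclidean norm. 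The remedy is the similarity transformation used in Theorem~\ref{theorem4.1}: conjugating $D_+(A_\alpha+\kappa_\alpha A_\alpha^T)$ by $D_+^{1/2}$ makes the conjugate matrix symmetric negative definite, tensoring with $I$ preserves this structure for $\mathcal{A}_x$, and the analogous reduction works for $\mathcal{A}_y$; the Cayley transform and the resolvent of a symmetric negative definite matrix are then classical contractions, and passing back through the similarity (or working in the equivalent weighted norm) yields the desired $\|\cdot\|_2$ bounds.
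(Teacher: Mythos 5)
Your proposal follows essentially the same route as the paper: the same error recurrence obtained by subtracting the consistency relation from the scheme, the same use of the commutativity (\ref{4.4}) to factor the iteration matrix into $(I-\mathcal{A}_x)^{-1}(I+\mathcal{A}_x)(I-\mathcal{A}_y)^{-1}(I+\mathcal{A}_y)$, the same four operator-norm bounds (which the paper simply outsources to Theorem 4.2 of \cite{Deng:11}), and the same telescoping against the residual bound (\ref{3.10}). Your closing discussion of why $\Re(\lambda)<0$ alone does not control $\|\cdot\|_2$ is a fair elaboration of what the citation hides, though note that $D_+^{1/2}(A_\alpha+\kappa_\alpha A_\alpha^T)D_+^{1/2}$ is negative definite but not symmetric for $\kappa_\alpha\neq 1$, and undoing the similarity costs a condition-number factor in the Euclidean norm, so the contraction constant $1$ should really be read as a bounded constant independent of $\tau,h$.
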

\begin{proof}
Taking  $e_{i,j}^n=u(x_i,y_j,t_n)-u_{i,j}^n$, and
 subtracting (\ref{3.9}) from (\ref{3.20}),  we obtain
 \begin{equation}\label{4.6}
  (I-\mathcal{A}_x )(I-\mathcal{A}_y )\mathbf{e}^{n+1}= (I+\mathcal{A}_x )(I+\mathcal{A}_y )\mathbf{e}^{n}+\mathbf{R}^{n+1},
\end{equation}
where $\mathcal{A}_x$ and  $\mathcal{A}_y$ are given in (\ref{3.18}), and
\begin{equation*}
\begin{split}
\mathbf{e}^{n}&=[e_{1,1}^n,e_{2,1}^n,\ldots,e_{N_x-1,1}^n,
   e_{1,2}^n,e_{2,2}^n,\dots,e_{N_x-1,2}^n,\ldots,e_{1,N_y-1}^n,e_{2,N_y-1}^n,\ldots,e_{N_x-1,N_y-1}^n]^T,\\
\mathbf{R}^{n}&=[R_{1,1}^n,R_{2,1}^n,\ldots,R_{N_x-1,1}^n,
   R_{1,2}^n,R_{2,2}^n,\dots,R_{N_x-1,2}^n,\ldots,R_{1,N_y-1}^n,R_{2,N_y-1}^n,\ldots,R_{N_x-1,N_y-1}^n]^T,\\
\end{split}
\end{equation*}
and  $|R_{i,j}^{n+1}|\leq \widetilde{c} \tau(\tau^2+(\Delta x)^4+(\Delta y)^4)$ is given in (\ref{3.10}).

From (\ref{4.4}),  $\mathcal{A}_x$ and $\mathcal{A}_y$ commute, then Eq. (\ref{4.6}) can be rewritten as
 \begin{equation*}
  \mathbf{e}^{n+1}= (I-\mathcal{A}_x )^{-1}(I+\mathcal{A}_x )(I-\mathcal{A}_y )^{-1}(I+\mathcal{A}_y )\mathbf{e}^{n}
  +(I-\mathcal{A}_x )^{-1}(I-\mathcal{A}_y )^{-1}\mathbf{R}^{n+1}.
\end{equation*}
Again, similar to the proof of Theorem 4.2 of [\cite{Deng:11}], we know that $\|(I-\mathcal{A}_\nu)^{-1}(I+\mathcal{A}_\nu)\|_2$  and $\|(I-\mathcal{A}_\nu)^{-1}\|_2$  are less than $1$,  where $\nu=x,y$.
 Then there exists
   \begin{equation*}
  || \mathbf{e}^{n}||_2\leq \sum_{k=0}^{n-1}|\mathbf{R}^{k+1}|\leq
  c (\tau^2+(\Delta x)^4+(\Delta y)^4).
\end{equation*}
\end{proof}

\section{Numerical results}
In this section, we numerically verify the above theoretical results including convergence
rates and numerical stability.  And the $ l_\infty$ norm is used to measure the numerical errors.
\subsection{Numerical results for 1D}\label{Sec4.1}

Consider the one-dimensional fractional diffusion
equation (\ref{3.1}) in the domain $0< x < 2 $, $0 < t \leq 1$, with the variable  coefficients $d_{+}(x)=x^\alpha$, $d_{-}(x)=2x^\alpha$, and the forcing function
\begin{equation*}
\begin{split}
 f(x,t)=&cos(t+1)x^4(2-x)^4 -x^\alpha sin(t+1) \Big[\frac{\Gamma(9)}{\Gamma(9-\alpha)}(x^{8-\alpha}+2(2-x)^{8-\alpha})\\
 &-8\frac{\Gamma(8)}{\Gamma(8-\alpha)}(x^{7-\alpha}+2(2-x)^{7-\alpha})
 +24\frac{\Gamma(7)}{\Gamma(7-\alpha)}(x^{6-\alpha}+2(2-x)^{6-\alpha})\\
 &-32\frac{\Gamma(6)}{\Gamma(6-\alpha)}(x^{5-\alpha}+2(2-x)^{5-\alpha})
 +16\frac{\Gamma(5)}{\Gamma(5-\alpha)}(x^{4-\alpha}+2(2-x)^{4-\alpha}) \Big],
  \end{split}
\end{equation*}
and the initial condition $u(x,0)=sin(1)x^4(2-x)^4$, the boundary conditions $u(0,t)=u(1,t)=0$,
and the exact solution of the equation is $u(x,t)=sin(t+1)x^4(2-x)^4.$

\begin{table}[h]\fontsize{9.5pt}{12pt}\selectfont
 \begin{center}
  \caption {The maximum errors and convergent orders for the scheme (\ref{3.7}) of the one-dimensional  fractional  diffusion equation (\ref{3.1}) at t=1 and $\tau=h^2$.}. \vspace{5pt}
\begin{tabular*}{\linewidth}{@{\extracolsep{\fill}}*{8}{c}}                                    \hline  
$(p,q,r,s,\overline{p},\overline{q},\overline{r},\overline{s})$ &$h$ & $\alpha=1.1$  & Rate        & $\alpha=1.9$ &   Rate    \\\hline
                         &        ~~1/10&  4.7842e-03  &             & 5.8264e-03  &            \\
(1,2,1,0,1,2,1,-2)       &        ~~1/20&  2.5436e-04  &  4.2333     & 5.9999e-04  & 3.2796     \\
                         &        ~~1/40&  1.9662e-05  &  3.6934     & 4.6242e-05  & 3.6977      \\
                         &        ~~1/60&  4.1748e-06  &  3.8218     & 9.7725e-06  & 3.8334       \\\hline 
                         &        ~~1/10&  8.5475e-03  &             & 5.5003e-03  &            \\
(1,2,1,-3,1,2,1,-2)      &        ~~1/20&  4.9722e-04  &  4.1035     & 5.7476e-04  & 3.2585     \\
                         &        ~~1/40&  3.9559e-05  &  3.6518     & 4.4490e-05  & 3.6914      \\
                         &        ~~1/60&  8.6604e-06  &  3.7464     & 9.4148e-06  & 3.8301       \\\hline 
    \end{tabular*}\label{tab:1}
  \end{center}
\end{table}
Table \ref{tab:1} shows the maximum errors, at time $t=1$ with $\tau=h^2$, the numerical results confirm the convergence with the global truncation error $\mathcal{O}(\tau^2+h^4)$.

\subsection{Numerical results for 2D}

Consider the two-dimensional fractional  diffusion
equation (\ref{1.2}), where $0< x < 2 $, $0< y < 2 $,  and $0 < t \leq 1$, with the variable coefficients $d_{+}(x,y)=x^\alpha$, $d_{-}(x,y)=2x^\alpha$,
 and $e_{+}(x,y)=y^\beta$, $e_{-}(x,y)=2y^\beta$, and
the initial condition $u(x,y,0)=sin(1)x^4(2-x)^4y^4(2-y)^4$ with the zero boundary conditions,
and the exact solution of the equation is $$u(x,y,t)=sin(t+1)x^4(2-x)^4y^4(2-y)^4.$$
From the above conditions, it is easy to get the forcing function $f(x,y,t)$.

\begin{table}[h]\fontsize{9.5pt}{12pt}\selectfont
  \begin{center}
  \caption {The maximum errors  and convergent orders for the scheme (\ref{3.20}) of the two-dimensional  fractional  diffusion equation (\ref{1.2}) at t=1 and $\tau=(\Delta x)^2=(\Delta y)^2$.} \vspace{5pt}
  \begin{tabular*}{\linewidth}{@{\extracolsep{\fill}}*{8}{c}}                                    \hline  
$(p,q,r,s,\overline{p},\overline{q},\overline{r},\overline{s})$ &$\Delta x$ & $\alpha=\beta=1.1$  & Rate        & $\alpha=1.8,\beta=1.9$ &   Rate    \\\hline
                         &        ~~1/10&  8.6154e-03  &             & 6.5211e-03  &            \\
(1,2,1,0,1,2,1,-2)       &        ~~1/20&  5.4115e-04  &  3.9928     & 4.4802e-04  & 3.8635     \\
                         &        ~~1/30&  1.2626e-04  &  3.5894     & 8.8416e-05  & 4.0023      \\
                         &        ~~1/40&  4.3328e-05  &  3.7177     & 2.7791e-05  & 4.0229      \\\hline 
                         &        ~~1/10&  1.0110e-02  &             & 6.6368e-03  &            \\
(1,2,1,-3,1,2,1,-2)      &        ~~1/20&  6.3881e-04  &  3.9842     & 4.5471e-04  &  3.8675     \\
                         &        ~~1/30&  1.4363e-04  &  3.6806    & 8.9704e-05  & 4.0032     \\
                         &        ~~1/40&  4.8431e-05  &  3.7788     & 2.8199e-05  & 4.0226       \\\hline 
    \end{tabular*}\label{tab:2}
  \end{center}
\end{table}

Table \ref{tab:2} displays the maximum errors of  the scheme (\ref{3.20}),
and confirms the desired convergence with the global truncation error $\mathcal{O}(\tau^2+(\Delta x)^4+(\Delta y)^4)$.

\section{Conclusions}

Based on the Lubich's operators, this work provides a new idea to obtain the high order discretization schemes for space fractional derivative.  We obtain the effective difference operators with 2nd order, 3rd order, and 4th order accuracy, called WSLD operators. For further checking the efficiency of the high order schemes, we apply the 4th order scheme to solve the space fractional diffusion equation with variable coefficients; and the detailed theoretical analysis and numerical verifications are presented. Hopefully, the higher order (5th order, 6th order, etc.) schemes can be obtained by following the idea given in this paper. In fact, for any fixed convergent order, the obtained difference operators are a class of difference operators, not just one particular operator.

\section*{Acknowledgments} This work was supported by the National Natural Science Foundation of China under
Grant No. 11271173 and the Program for New Century Excellent Talents in University under Grant No.
NCET-09-0438.

\end{document}